\theoremstyle{plain}
\newtheorem{theorem}{Theorem}[section]
\newtheorem{corollary}[theorem]{Corollary}
\newtheorem{definition}[theorem]{Definition}
\newtheorem{lemma}[theorem]{Lemma}
\newtheorem{proposition}[theorem]{Proposition}
\newtheorem{remark}[theorem]{Remark}
\newtheorem{example}[theorem]{Example}
\def\bn{\begin{definition}}
\def\en{\end{definition}}
\def\ba{\begin{array}}
\def\ea{\end{array}}
\def\be{\begin{equation}}
\def\ee{\end{equation}}
\def\bd{\begin{description}}
\def\ed{\end{description}}
\def\bu{\begin{enumerate}}
\def\eu{\end{enumerate}}
\def\bi{\begin{itemize}}
\def\ei{\end{itemize}}
\newcommand{\A}{\mathcal A}
\newcommand{\B}{\mathcal B}
\def\diag{{\rm diag}}
\def\bN{\mathbb{N}}
\def\cN{\mathcal{N}}
\def\bZ{\mathbb Z}
\def\bQ{\mathbb Q}
\def\cD{\mathcal{D}}
\def\bD{\mathbb D}
\def\cA{\mathcal{A}}
\def\cP{\mathcal{P}}
\def\cF{\mathcal{F}}
\def\cG{\mathcal{G}}
\def\cU{\mathcal{U}}
\def\cV{\mathcal{V}}
\def\cW{\mathcal{W}}
\def\cE{\mathcal{E}}
\def\ds{\displaystyle}
\def\i{\mathfrak{i}}
\def\<{\langle}
\def\>{\rangle}
\begin{document}
\title[]{Anqie entropy and arithmetic compactification of natural numbers}

\author[]{Fei Wei}

\address{Yau Mathematical Sciences Center, Tsinghua University, Beijing 100084, China}

\email{weif@mail.tsinghua.edu.cn}

\maketitle
\begin{abstract}
To study arithmetic structures of natural numbers, we introduce a notion of entropy of arithmetic functions,
called anqie entropy. This entropy possesses some crucial properties common to both Shannon's and Kolmogorov's entropies.
We show that all arithmetic functions with zero anqie entropy form a C*-algebra. Its maximal ideal space defines
our arithmetic compactification of natural numbers, which is totally disconnected but not extremely disconnected.
We also compute the $K$-groups of the space of all continuous functions on the arithmetic compactification.
As an application, we show that any topological dynamical system with topological entropy $\lambda$, can be approximated by symbolic dynamical systems with entropy less than or equal to $\lambda$.\\
\textsc{MSC 2020}. 37A35, 37A44, 46J10, 37A55 \\
\textsc{Keywords}. Anqie entropy, arithmetic compactification, C*-algebra, $K$-groups, totally disconnected.
\end{abstract}
\section{introduction}
Let $\bN=\{0,1,2,\ldots\}$ be the set of natural numbers. Complex-valued functions defined on $\bN$ are usually called \emph{arithmetic functions}.  They are important in the study of the distribution of primes and other arithmetic problems (e.g., the Prime Number Theorem, the Twin Prime Conjecture, etc). In this paper, we shall apply tools from operator algebra and dynamical systems to study arithmetic functions.
We start from C*-subalgebras of $l^{\infty}(\mathbb{N})$, the algebra of all bounded arithmetic functions, which is an abelian C*-algebra acting on $l^2(\mathbb{N})$.

Suppose that $\mathcal{A}$ is a unital C*-subalgebra of $l^{\infty}(\mathbb{N})$, then there is a compact Hausdorff space $X$ such that $\mathcal{A}$ is *-isomorphic to $C(X)$ by Stone-Gelfand-Naimark theory, where $C(X)$ is the space of all continuous complex-valued functions on $X$. The space $X$ is known as the maximal ideal space of $\mathcal{A}$. Moreover, if $\mathcal{A}$ is generated by a bounded arithmetic function $f$, its maximal ideal
space $X$ is homeomorphic to the closure of $f(\mathbb{N})$ in $\mathbb{C}$. For example, the C*-algebras generated by $\chi_{\mathcal{P}}$, the characteristic function defined on the set of primes, and by $f(n)=(-1)^{n}$ are *-isomorphic, as their maximal spaces are homeomorphic to the space with two points.
However, it is obvious that $\chi_{\mathcal{P}}$ carries more arithmetic information than $(-1)^{n}$. So generally, C*-algebras may not reflect arithmetic properties of $\mathbb{N}$. We are interested in  C*-subalgebras of $l^{\infty}(\mathbb{N})$, which can preserve arithmetic structures of natural numbers, thus we introduce the following concept of anqie.

\begin{definition}\label{definition of anqie}
Let $\mathcal{A}$ be a unital C*-subalgebra of $l^{\infty}(\mathbb{N})$. We call $\mathcal{A}$ \textbf{an anqie} (of $\mathbb{N}$) if $\mathcal{A}$ is invariant under the map $\sigma_{A}$ on $l^{\infty}(\mathbb{N})$, given by $(\sigma_{A}f)(n)=f(n+1)$, for any $f\in l^{\infty}(\bN)$ and $n\in \bN$. \end{definition}

Throughout the paper, we shall assume that all abelian C*-algebras are unital. From the viewpoint of dynamical systems, an anqie of $\mathbb{N}$ is a topological dynamical system associated with the additive structure of $\mathbb{N}$. Precisely, suppose that $\mathcal{A}$ is a C*-subalgebra of $l^{\infty}(\mathbb{N})$ and $X$ is its maximal ideal space. Let $\iota$ be the map from $\mathbb{N}$ to $X$ given by the multiplicative state of point evaluation, i.e., $\iota(n): f\mapsto f(n)$ for any $f\in \mathcal{A}$. We conclude that (see Proposition \ref{anqie is a shif invariant algebra})  $\mathcal{A}$ is an anqie if and only if the map $\iota(n)\mapsto \iota(n+1)$ from $\iota(\mathbb{N})$ to $X$ (corresponding to $n\mapsto n+1$ on $\mathbb{N}$) can be continuously extended to a map from $X$ to $X$. If we denote this map by $A$, then we also call $(X,A)$ \emph{an anqie $($of $\mathbb{N}$$)$}.

Based on such connections between arithmetics and dynamics, algebraic and analytical methods can be used in the study of arithmetic functions. This idea was explored in \cite{Wei18} to give some partial results about Sarnak's M\"{o}bius disjointness conjecture. In this paper, we further study arithemetic functions through anqies.

\subsection{Anqie entropy} A major tool we use to study anqies is the notion of ``entropy''. Originated in physics, ``entropy'' measures some uncertainty or disorderness of a thermodynamical system. A mathematical notion of entropy, initiated by Shannon \cite{Sha48}, is a measurement of information contained in signals or random variables. Inspired by Shannon's entropy, Kolmogorov \cite{Kol58} and Sinai \cite{Sin59} introduced a metric entropy on dynamical systems. Similar ideas were used by Adler-Konheim-McAndrew \cite{Adl-Kon-And65} (and later by Bowen \cite{Bow71}) to define a topological entropy for continuous maps on compact Hausdorff spaces. Moreover, Furstenberg \cite{Fur} introduced an entropy for stationary sequences of finite-valued random variables through the information contained in certain fields of measurable sets determined by them; Voiculescu \cite{Voi93} established free probability theory and introduced free entropy.

In this paper, we shall introduce the \emph{anqie entropy} of an anqie $\mathcal{A}$, denoted by $\AE(\mathcal{A})$, which is defined to be the topological entropy of the additive map $A$ on $X$. We refer readers to Section \ref{anqie entropy} for more details. For a family of arithmetic functions $\cF\subseteq l^\infty(\bN)$, denote by $\cA_\cF$ the anqie generated by $\cF$. It is the smallest $\sigma_{A}$-invariant C*-subalgebra of $l^\infty(\bN)$ containing $\cF$. For simplicity, we write $\AE(\cF)=\AE(\cA_\cF)$ and call it the \emph{anqie entropy} of $\cF$. When $\cF=\{f_1,\ldots,f_n\}$ is a finite set, we also use the notations $\cA_{f_1,\ldots,f_n}$ and $\AE(f_1,\ldots,f_n)$.

The following inequalities about anqie entropy will be proved in Section \ref{properties of anqie entropy},
\[\AE(f+g),\AE(f\cdot g)\leq \AE(f)+
\AE(g),\quad \AE(\sqrt{|f|})\leq \AE(f),\quad f,g\in l^\infty(\mathbb{N}).\]
More generally, the following sub-additivity holds.

\begin{theorem}\label{thm_entropy_additive} For any arithmetic functions
$f_1,\ldots,f_n,g_1,\ldots,g_m\in l^\infty(\bN)$, we have
$$\AE(f_1,\ldots,f_n,g_1,\ldots,g_m)\le \AE(f_1,\ldots,f_n)+
\AE(g_1,\ldots,g_m).$$ The above equality holds if $f_1,\ldots,f_n$
and $g_1,\ldots,g_m$ are two anqie independent families.
\end{theorem}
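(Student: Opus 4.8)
The plan is to realize the joint anqie as a closed invariant subsystem of the topological product of the two constituent systems, and then to invoke the two standard features of topological entropy: its monotonicity under passage to subsystems and the product formula $h(T\times S)=h(T)+h(S)$.

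First I would fix notation. Write $\cA_1=\cA_{f_1,\ldots,f_n}$, $\cA_2=\cA_{g_1,\ldots,g_m}$ and $\cA=\cA_{f_1,\ldots,f_n,g_1,\ldots,g_m}$, and let $Y,Z,X$ be their respective maximal ideal spaces, carrying the additive maps $B,C,A$ (these maps exist and are continuous by Proposition \ref{anqie is a shif invariant algebra}). Since each of the three algebras is generated as a C*-algebra by finitely many functions together with all their $\sigma_A$-shifts $\{\sigma_A^k f_i\}$, $\{\sigma_A^k g_j\}$, it is separable, so $X,Y,Z$ are compact metrizable. This lets me use Bowen's separated/spanning-set formulation of topological entropy, for which both the subsystem inequality and the product formula have clean proofs.

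Next, the inclusions $\cA_1,\cA_2\subseteq\cA$ dualize, via Gelfand theory, to factor maps $\pi_Y\colon X\to Y$ and $\pi_Z\colon X\to Z$. I would combine them into $\pi=(\pi_Y,\pi_Z)\colon X\to Y\times Z$ and verify the two properties that constitute the heart of the argument. Equivariance, $\pi\circ A=(B\times C)\circ\pi$, is checked on the dense set $\iota(\bN)$, where every map reduces to $n\mapsto n+1$, and then extended by continuity. Injectivity holds because $\pi_Y^*(C(Y))$ is generated by the $f_i$ and their shifts and $\pi_Z^*(C(Z))$ by the $g_j$ and their shifts, so together they generate all of $C(X)$ and hence separate the points of $X$. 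As $X$ is compact and $Y\times Z$ Hausdorff, $\pi$ is then a homeomorphism onto a closed, $(B\times C)$-invariant subset of $Y\times Z$; that is, $(X,A)$ embeds as a subsystem of $(Y\times Z,B\times C)$. With this in hand the inequality is immediate: monotonicity under subsystems gives $h(A)\le h(B\times C)$, and the product formula gives $h(B\times C)=h(B)+h(C)$, whence $\AE(f_1,\ldots,f_n,g_1,\ldots,g_m)\le\AE(f_1,\ldots,f_n)+\AE(g_1,\ldots,g_m)$. For the equality clause, the natural reading of anqie independence is that $\pi$ is surjective, so that $(X,A)\cong(Y\times Z,B\times C)$; the product formula then upgrades the inequality to $h(A)=h(B)+h(C)$.

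I expect the main obstacle to be the embedding step rather than the entropy estimates. Proving that $\pi$ is an equivariant homeomorphism onto a closed invariant set — in particular confirming that the two subalgebras jointly separate the points of $X$ and that the three additive maps are genuinely compatible across the three spectra — is where the real content lies. Once that is settled, subsystem monotonicity together with $h(T\times S)=h(T)+h(S)$ finishes the inequality, and the equality case is just the observation that independence makes the embedding surjective; the only remaining care is to match the notion of anqie independence used here with the condition that the joint spectrum is the full product $Y\times Z$.
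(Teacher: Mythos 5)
Your proposal is correct and follows essentially the same route as the paper: the map $\pi=(\pi_Y,\pi_Z)$ is exactly the paper's $i(\rho)=(\rho|_{\cA_1},\rho|_{\cA_2})$, realizing $(X,A)$ as a closed invariant subsystem of the product system, after which subsystem monotonicity and the product formula $h(A_1\times A_2)=h(A_1)+h(A_2)$ of Adler--Konheim--McAndrew give the inequality, with independence forcing $\pi$ to be onto and hence yielding equality. The only (inessential) difference is your appeal to metrizability and Bowen's formulation, which the paper avoids by working directly with open covers.
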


\noindent The formal definition of \emph{anqie independence} will be given in Section \ref{properties of anqie entropy}. One of the most crucial features of Shannon's entropy is its additivity for independent random variables. Theorem \ref{thm_entropy_additive} shows that the anqie entropy shares similar property as well.

Entropy sometimes has continuity properties. For example, Yomdin \cite{Yom1} and Newhouse \cite{New} showed that, for any compact smooth manifold $M$, the topological entropy function $f \mapsto h(f)$ from $C^\infty(M,M)$ to $[0,+\infty]$ is upper semi-continuous. Newhouse also concluded the continuity for $C^{\infty}$ diffeomorphisms of surfaces from
a result of Katok \cite{Kat}. Furthermore, for a compact interval $I$, Propositions 30 and 31 in Chapter 8 of \cite{Blo-Cop} show that the set of all maps $f\in C(I,I)$ with $h(f)=\infty$ is dense in $C(I,I)$, and the topological entropy, regarded as a map $h:\, C(I,I)\rightarrow [0,\infty]$, is lower semi-continuous. Also, Kolmogorov-Sinai entropy is an upper semi-continuous function of invariant measures. For the anqie entropy, we have the following result.

\begin{theorem}\label{infinity entropy is dense in the set of all functions}
Let $\mathcal{E}_{\infty}(\mathbb{N})$ be the set of all functions $f$ in $l^{\infty}(\mathbb{N})$ with $\AE(f)=+\infty$. Then $\mathcal{E}_{\infty}(\mathbb{N})$ is dense in $l^{\infty}(\mathbb{N})$ in norm topology.
\end{theorem}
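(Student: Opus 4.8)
The plan is to show density by proving that functions of infinite anqie entropy are abundant enough to approximate any target. Given $f \in l^\infty(\bN)$ and $\varepsilon > 0$, I want to produce $g \in \mathcal{E}_\infty(\bN)$ with $\|f - g\|_\infty < \varepsilon$. The natural idea is to perturb $f$ by adding (or overwriting $f$ on a sparse set with) a small-amplitude function $h$ that by itself already carries infinite anqie entropy, and then use the subadditivity bound from Theorem \ref{thm_entropy_additive} together with a lower bound argument to guarantee $\AE(f+h) = +\infty$. Subadditivity alone only gives an upper bound, so the real content is a mechanism forcing the entropy to be large; I expect this to be the crux.

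First I would construct a single bounded arithmetic function of infinite anqie entropy whose sup-norm is as small as we like. The cleanest construction is to encode an arbitrary binary sequence into the values of $h$ along a very sparse, fast-growing subset of $\bN$ while keeping $h$ equal to $0$ elsewhere. Concretely, fix blocks $B_k \subset \bN$ of length $\ell_k \to \infty$ separated by huge gaps, and let $h$ take values in $\{0, \varepsilon/2\}$ on $\bigcup_k B_k$ realizing, within block $B_k$, all $2^{\ell_k}$ binary words of length $\ell_k$ (this is possible by choosing $\ell_k$ and placing a de Bruijn-type or simply an exhaustive listing of words as $k$ grows). The orbit closure of the point corresponding to $h$ under the shift $A$ then contains a full shift on $\{0,\varepsilon/2\}$ as a subsystem, whose topological entropy is $\log 2$; by taking the alphabet to grow (letting $h$ take values in a set of size $m_k \to \infty$, still within $[0,\varepsilon/2]$) one arranges the orbit closure to contain full shifts on alphabets of unbounded size, forcing $\AE(h) = +\infty$. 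Since the topological entropy of $A$ on $X$ is exactly the anqie entropy, this gives $\AE(h) = +\infty$ with $\|h\|_\infty \le \varepsilon/2$.

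Next I would transfer infinite entropy from $h$ to $g := f + h$. Here subadditivity runs the wrong way, so instead I would argue directly at the level of dynamical systems: the anqie $\mathcal{A}_{f+h}$ (or $\mathcal{A}_{f,h}$) surjects onto a system whose entropy is bounded below by the entropy of the $h$-subsystem. The key observation is that the coordinate $h = g - f$ is recoverable provided $f$ is chosen benign, but to avoid assuming anything about $f$ I would instead form the joint anqie $\mathcal{A}_{f,h}$ and note $\AE(f,h) \ge \AE(h) = \infty$ (since the factor map onto the $h$-system does not increase entropy, and $h$ is a coordinate of the joint system), whence the joint entropy is infinite. To pass from the joint function to a single function of small norm, I would instead directly arrange the perturbation so that $f + h$ already generates a system factoring onto the infinite-entropy shift: the simplest route is to \emph{replace} $f$ by $g$ on the sparse set $\bigcup_k B_k$, setting $g(n) = h(n)$ for $n \in \bigcup_k B_k$ and $g(n) = f(n)$ otherwise, with the values $h(n)$ chosen from a growing finite alphabet inside a ball of radius $\varepsilon$ around the values $f$ would take. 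Then $\|f - g\|_\infty \le \varepsilon$, and the orbit closure of $g$ still contains the full shifts read off along the blocks, so $\AE(g) = +\infty$.

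The main obstacle I anticipate is the entropy lower bound: showing rigorously that the orbit closure of (the point corresponding to) $g$ under $A$ contains full shifts of arbitrarily large entropy. This requires controlling how the block-encoded words appear as subwords in the orbit and verifying that the separation between blocks does not destroy the appearance of all words of a given length and alphabet size in the limit — essentially checking that the $(n,\delta)$-separated sets for $A$ grow exponentially with unbounded rate. I would handle this by the standard identification of topological entropy of a subshift with the exponential growth rate of its language, and by designing the blocks $B_k$ so that block $k$ exhaustively lists all words of length $k$ over an alphabet of size $k$ (spaced so they survive into the orbit closure as a genuine sofic/full-shift factor). Once the language of the orbit closure is shown to contain, for every $m$, all length-$n$ words over an $m$-letter alphabet for infinitely many $n$, the entropy is $\ge \log m$ for every $m$, giving $+\infty$, and density follows since $\varepsilon$ was arbitrary.
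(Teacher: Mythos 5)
Your first step --- building a function $h$ with $\|h\|_{l^\infty}\leq \epsilon/2$ and $\AE(h)=+\infty$ by listing, in sparse blocks, all finite words over nested alphabets $A_m\subseteq[0,\epsilon/2]$ of unbounded size --- is correct: every finite word over $A_m$ occurs in $h$, so the full shift $A_m^{\mathbb N}$ sits inside the orbit closure $X_h$ and $\AE(h)\geq \log m$ for every $m$. This is a legitimate substitute for the paper's Example \ref{an example of infinity entropy}, which instead samples an interval map of infinite topological entropy along a transitive orbit.

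The gap is in the transfer step. Your final construction sets $g(n)=h(n)$ on the blocks with values ``chosen from a growing finite alphabet inside a ball of radius $\varepsilon$ around the values $f$ would take,'' and asserts that the orbit closure of $g$ still contains the full shifts. But there need not exist a single alphabet of two or more points lying within $\varepsilon$ of every value $f$ takes on a block: for $f(n)=(-1)^n$ and $\varepsilon=1/2$ the $\varepsilon$-balls around $f(n)$ for consecutive $n$ are disjoint. If instead you let the alphabet depend on the position $n$, the symbols read off along a block no longer form words over a fixed alphabet, the orbit closure no longer visibly contains a full shift, and when $\overline{f(\mathbb N)}$ is a continuum the symbol index cannot be recovered continuously from the value $g(n)$, so no entropy lower bound follows. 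You flagged exactly this as ``the main obstacle,'' but your proposed resolution (designing the blocks to list all words) concerns only $h$ itself, not its interaction with $f$. The missing idea is the reverse triangle inequality of Corollary \ref{cor_AE(f+g)_leq_AEf+AEg}: applying subadditivity to $\epsilon h=(f+\epsilon h)-f$ gives $\AE(f+\epsilon h)\geq \AE(\epsilon h)-\AE(f)=+\infty$ whenever $\AE(f)<+\infty$ (and if $\AE(f)=+\infty$ there is nothing to prove). That one line is essentially the paper's entire proof; it also completes your route (a), which you correctly set up (observing $\AE(f,h)=+\infty$) and then abandoned, since $\mathcal A_{f,h}=\mathcal A_{f,f+h}$ and Theorem \ref{thm_entropy_additive} then yields $\AE(f+h)\geq \AE(f,h)-\AE(f)$.
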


The following theorem shows the lower semi-continuity of the anqie entropy.
\begin{theorem} \label{thm_semi_continuous}
Let $\cF=\{f_{0},f_{1},\ldots\}$ be a family of bounded arithmetic functions with $\AE(\mathcal{F})<+\infty$. Then for any $\epsilon>0$, there is a $\delta>0$, such that whenever $\cG=\{g_{0},g_{1},\ldots\}\subseteq l^\infty(\bN)$ satisfies $\sup_{i\geq 0}\|g_i-f_i\|_{l^\infty}<\delta$, we have ${\AE}(\cG)>{\AE}(\cF)-\epsilon$.
\end{theorem}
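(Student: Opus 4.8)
The plan is to evaluate both entropies through Bowen's separated--set description of topological entropy, to transport this to a counting problem living entirely on $\bN$, and then to observe that a small uniform perturbation of the generators can shrink the separation scale only by a controlled amount and so cannot destroy separated configurations. First I would fix a compatible metric on $X_\cF$ (which is metrizable since $\cF$ is countable, hence $\cA_\cF$ separable). Writing $\hat h$ for the Gelfand transform of $h\in\cA_\cF$ and abbreviating $d_\cF(a,b):=\sum_{i\ge 0}2^{-i}\min(|f_i(a)-f_i(b)|,1)$ for $a,b\in\bN$, this extends to a continuous pseudometric
\[
d_\cF(\chi,\chi')=\sum_{i\ge 0}2^{-i}\min\bigl(|\hat f_i(\chi)-\hat f_i(\chi')|,1\bigr),\qquad \chi,\chi'\in X_\cF,
\]
with Bowen metric $d_{\cF,n}(\chi,\chi')=\max_{0\le j<n}d_\cF(A^j\chi,A^j\chi')$. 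Since $\cA_\cF$ is generated by $\{\sigma_A^j f_i\}_{i,j\ge0}$ and $\hat f_i(A^j\chi)=\chi(\sigma_A^j f_i)$, the family $\{d_\cF\circ(A^j\times A^j)\}_{j\ge0}$ separates points of $X_\cF$, so $d_{\cF,n}$ recovers the topology and $\AE(\cF)=\lim_{r\to0}\limsup_n\frac1n\log s_\cF(n,r)$, where $s_\cF(n,r)$ is the maximal cardinality of a $d_{\cF,n}$--$r$--separated set. Because $X_\cF$ is the closure of the single forward orbit $\iota(\bN)$ under $A$, and $d_{\cF,n}$ only involves the finitely many coordinates $\{(i,j):i\le I,\ j<n\}$ up to a tail bounded by $\sum_{i>I}2^{-i}$, density of $\iota(\bN)$ gives, for any point, an orbit point agreeing on those coordinates; a triangle--inequality argument then shows the separated sets may be taken inside $\iota(\bN)$. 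Thus $s_\cF(n,r)$ is the largest number of integers that are pairwise $d_{\cF,n}$--$r$--separated, a quantity depending only on the values $f_i(m+j)$. The same reduction applies verbatim to $\cG$, giving $s_\cG(n,r)$.

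The perturbation step is then immediate and is the heart of the argument. If $\sup_{i\ge0}\|g_i-f_i\|_{l^\infty}<\delta$, then for all $a,b\in\bN$, using that $t\mapsto\min(t,1)$ is $1$--Lipschitz and $\sum_{i\ge0}2^{-i}=2$,
\[
\bigl|d_\cG(a,b)-d_\cF(a,b)\bigr|\le \sum_{i\ge0}2^{-i}\Bigl||g_i(a)-g_i(b)|-|f_i(a)-f_i(b)|\Bigr|\le \sum_{i\ge0}2^{-i}\cdot 2\delta=4\delta,
\]
and taking the maximum over the shifts $0\le j<n$ yields $|d_{\cG,n}(a,b)-d_{\cF,n}(a,b)|\le 4\delta$ for every $n$. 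Hence any family of integers that is $d_{\cF,n}$--$r$--separated is automatically $d_{\cG,n}$--$(r-4\delta)$--separated, so
\[
s_\cG(n,r-4\delta)\ \ge\ s_\cF(n,r)\qquad(r>4\delta).
\]

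Finally I would choose the scale. Because $\AE(\cF)<+\infty$ and $r\mapsto\limsup_n\frac1n\log s_\cF(n,r)$ increases to $\AE(\cF)$ as $r\downarrow0$, there is an $r>0$ with $\limsup_n\frac1n\log s_\cF(n,r)>\AE(\cF)-\epsilon$. Set $\delta=r/8$, so that $r-4\delta=r/2>0$. Then for every $\cG$ with $\sup_{i}\|g_i-f_i\|_{l^\infty}<\delta$,
\[
\AE(\cG)\ \ge\ \limsup_n\tfrac1n\log s_\cG(n,r/2)\ \ge\ \limsup_n\tfrac1n\log s_\cF(n,r)\ >\ \AE(\cF)-\epsilon,
\]
where the first inequality holds because $\AE(\cG)$ is the supremum over all positive scales of the corresponding limits. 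This establishes the asserted lower semi--continuity.

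The only genuinely delicate point is the reduction in the first paragraph, namely that the abstract topological entropy of $(X_\cF,A)$ is realised by separated sets lying in the dense orbit $\iota(\bN)$ and measured by the generator metric $d_\cF$ (and likewise for $\cG$). Everything else is soft: once this normalisation from Section~\ref{anqie entropy} is in place, the Lipschitz comparison $s_\cG(n,r-4\delta)\ge s_\cF(n,r)$ and the choice of $\delta$ are routine, and it is exactly the ``metric'' (as opposed to partition--based) form of the entropy that makes the $-4\delta$ slack absorb the perturbation cleanly.
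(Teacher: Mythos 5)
Your argument is correct in substance and reaches the result by a genuinely different formalization of entropy than the paper's. The paper works with open covers: it equips the canonical representations $(X_\cF,B_\cF)$ and $(X_\cG,B_\cG)$ with the product metric on $(\bD^{\bN})^{\bN}$, takes the refining sequences of covers $\cU_m$, $\cV_m$ by $10^{-m}$-balls centered at orbit points, and shows that a minimal subcover of $\bigvee_j B_\cG^{-j}\cV_{m'}$ pulls back, after enlarging the radius by one step to absorb the $2\delta$ perturbation, to a subcover of $\bigvee_j B_\cF^{-j}\cU_{m'-1}$, giving $\cN(\bigvee_j B_\cF^{-j}\cU_{m'-1})\le \cN(\bigvee_j B_\cG^{-j}\cV_{m'})$. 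You instead use Bowen separated sets for a generator pseudometric, reduce the counting to the common index set $\bN$ via density of $\iota(\bN)$, and absorb the perturbation through the uniform estimate $|d_{\cG,n}-d_{\cF,n}|\le 4\delta$. Both proofs exploit the same mechanism (a $\delta$-perturbation of the generators degrades the coding scale by $O(\delta)$, which the limit in the scale parameter absorbs), and both must transfer the comparison to $\bN$ because $X_\cF$ and $X_\cG$ are different spaces; yours buys a cleaner one-line perturbation inequality, while the paper's stays with covers and a genuine compatible metric and so needs no auxiliary facts about pseudometrics.

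The one step you should not wave through is the claim $\AE(\cF)=\lim_{r\to 0}\limsup_n \frac1n\log s_\cF(n,r)$ for your $d_\cF$. Since $d_\cF$ uses only the coordinate $j=0$, it is merely a continuous pseudometric on $X_\cF$, and for fixed $n$ the Bowen pseudometric $d_{\cF,n}$ does \emph{not} recover the topology; so ``the family separates points, hence $d_{\cF,n}$ recovers the topology'' is not a proof. What you are invoking is the standard lemma that a \emph{dynamically generating} continuous pseudometric (one with $\sup_{j\ge 0}d_\cF(A^j\chi,A^j\chi')=0\Rightarrow\chi=\chi'$) computes topological entropy; its proof is a compactness argument producing, for each scale $\epsilon$ of a compatible metric $D$, constants $m$ and $\delta_0$ with $d_{\cF,m}(\chi,\chi')\le\delta_0\Rightarrow D(\chi,\chi')<\epsilon$, whence $s_D(n,\epsilon)\le s_{d_\cF}(n+m,\delta_0)$. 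This is precisely the direction $h_{d_\cF}\ge\AE(\cF)$ that your choice of $r$ relies on (for $\cG$ only the easy direction is needed). Either cite that lemma explicitly, or sidestep it by replacing $d_\cF$ with the full metric $\sum_{i,j}2^{-i-j}\min(|f_i(a+j)-f_i(b+j)|,1)$, which is compatible with the topology of $X_\cF$ and satisfies the same $O(\delta)$ perturbation bound. With that repair the proof is complete.
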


Intuitively, Theorem \ref{infinity entropy is dense in the set of all functions} shows that a slight perturbation on a function may turn its entropy to infinity, while Theorem \ref{thm_semi_continuous} implies that to substantially decrease the entropy of an arithmetic function, a large perturbation is needed.

\subsection{Arithmetic compactification of natural numbers} As a corollary of Theorem \ref{thm_semi_continuous}, we shall prove in Section \ref{the arithmetic compactification of natural numbers} that $\cE_0(\bN)$, the set of all bounded arithmetic functions with zero anqie entropy, is an anqie (see Theorem \ref{thm_C_0_subCalg}). We call the maximal ideal space of $\cE_0(\bN)$ \emph{the arithmetic compactification} of natural numbers, denoted by $E_{0}(\mathbb{N})$. Problems involving transitive topological dynamical systems with zero topological entropy may be studied in this space. For example, in our language, Sarnak's M\"{o}bius disjointness conjecture (see \cite{Liu-Sar15}) is equivalent to that $\lim\nolimits_{N\rightarrow \infty}N^{-1}\sum\nolimits_{n=0}^{N-1}\mu(n)f(n)=0$ for all continuous function $f$ on $E_0(\bN)$. Here $\mu(n)$ is the M\"obius function defined to be $(-1)^r$ if $n$ is the product of $r$ distinct primes and $0$ otherwise.

In Section \ref{the arithmetic compactification of natural numbers}, we shall study properties of $E_{0}(\mathbb{N})$. From the topological viewpoint, the arithmetic compactification is the maximal zero entropy topological factor of the Stone-\v{C}ech compactification of $\mathbb{N}$. Here, recall that the maximal ideal space of $l^{\infty}(\mathbb{N})$ is known as the Stone-\v{C}ech compactification of $\mathbb{N}$ (see \cite{Cec37}). The arithmetic and Stone-\v{C}ech compactifications of $\mathbb{N}$ are both uncountable and unmetrizable. For comparison, the Stone-\v{C}ech compactification is extremely disconnected (i.e., the closure of any open set is still open) and totally disconnected (i.e., each pair of points can be separated by sets that are both open and closed: \emph{clopen sets}), while for the arithmetic compactification, we obtain

\begin{theorem}\label{arithmetic compactification is totally disconnected}
The space $E_{0}(\mathbb{N})$ is totally disconnected but not extremely disconnected.
\end{theorem}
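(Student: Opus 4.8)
The plan is to translate both assertions into the language of projections of the commutative C*-algebra $\cE_0(\bN)$, which by Theorem~\ref{thm_C_0_subCalg} is an anqie, so $\cE_0(\bN)=C(E_0(\bN))$. A projection of $\cE_0(\bN)$ is exactly a characteristic function $\chi_S$ of a set $S\subseteq\bN$ with $\AE(\chi_S)=0$, and such a $\chi_S$ is the indicator of a clopen subset of $E_0(\bN)$. Using the displayed inequalities $\AE(fg),\AE(f+g)\le\AE(f)+\AE(g)$ one checks that the zero--entropy subsets of $\bN$ form a Boolean algebra: they are closed under complement (as $\chi_{\bN\setminus S}=1-\chi_S$ generates the same anqie), under intersection (as $\chi_{S\cap T}=\chi_S\chi_T$), and hence under union. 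Since a compact Hausdorff space is totally disconnected iff its clopen sets separate points, and this happens iff the closed linear span of the projections is all of $\cE_0(\bN)$, total disconnectedness reduces to the approximation statement: every $f\in\cE_0(\bN)$ (equivalently, taking real and imaginary parts, every real $f$) is a norm limit of finite--range functions of zero anqie entropy. Indeed, if $g\in\cE_0(\bN)$ takes finitely many values $v_1,\dots,v_m$, then each level indicator $\chi_{g^{-1}(v_j)}=\mathbf 1_{\{v_j\}}\circ g$ is a continuous function of $g$, hence lies in $\cA_g$ and satisfies $\AE(\chi_{g^{-1}(v_j)})\le\AE(g)=0$, so $g=\sum_j v_j\chi_{g^{-1}(v_j)}$ is a finite combination of zero--entropy projections.

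I would first produce zero--entropy projections by continuous functional calculus. For $f\in\cE_0(\bN)$ and any threshold $t$ lying in a gap of $K:=\overline{f(\bN)}$, i.e. $(t-\delta,t+\delta)\cap K=\emptyset$ for some $\delta>0$, the indicator $\mathbf 1_{[t,\infty)}$ agrees on $K$ with a continuous function $\psi$, so $\chi_{\{f\ge t\}}=\psi\circ f\in\cA_f$ and therefore $\AE(\chi_{\{f\ge t\}})\le\AE(f)=0$. When $K$ is totally disconnected the gap thresholds are dense, and subdividing $K$ by such thresholds into pieces of diameter $<\epsilon$ yields the required finite--range approximation of $f$. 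The main obstacle is the case in which $K$ contains intervals (this genuinely occurs: $f(n)=\cos(2\pi n\alpha)$ has zero anqie entropy but $K=[-1,1]$), for then no gap thresholds are available inside the interval.

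To overcome this I would show that the level--set codings already supply enough projections, namely that the set of bad thresholds $B=\{t:\AE(\chi_{\{f\ge t\}})>0\}$ has empty interior, so that good thresholds stay dense and the finite--range approximation still goes through. The idea is that the subshift generated by $\chi_{\{f\ge t\}}$ is the coding of the zero--entropy system $(X_f,A)$ by the partition whose boundary is the level set $\partial_t=\{\hat f=t\}\subseteq X_f$; the coding map is continuous off $\bigcup_{i\ge0}A^{-i}\partial_t$, so any excess topological entropy can only be created along orbit visits to $\partial_t$. Passing to empirical measures, $\AE(\chi_{\{f\ge t\}})>0$ forces an $A$--invariant Borel measure $\mu$ on $X_f$ with $\mu(\partial_t)>0$; but for each invariant $\mu$ the push--forward $\hat f_*\mu$ is a probability measure on $\bR$ and so has at most countably many atoms. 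The hard part is to upgrade this ``countably many bad $t$ per measure'' into ``$B$ has empty interior'', controlling the whole weak--$*$ compact simplex of invariant measures at once and handling the discontinuity of the coding (a genuinely continuous factor cannot raise entropy, but here the coding is only almost everywhere continuous). Granting that good thresholds are dense, every real $f\in\cE_0(\bN)$ is approximated in norm by finite--range zero--entropy functions, and $E_0(\bN)$ is totally disconnected.

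Finally, $E_0(\bN)$ is not extremely disconnected, and this part I expect to be clean and independent of the above. Each point $\iota(n)$ is isolated, since $\chi_{\{n\}}$ has finite support, hence zero entropy, and its Gelfand transform is the indicator of the clopen singleton $\{\iota(n)\}$; thus every subset of $\iota(\bN)$ is open. Choose $E\subseteq\bN$ with $\AE(\chi_E)>0$ --- for instance the set of $1$'s of a sequence in $\{0,1\}^{\bN}$ whose orbit closure is the full shift, so that $\AE(\chi_E)=\log 2$ --- and put $O=\bN\setminus E$. Then $\iota(E)$ and $\iota(O)$ are disjoint open subsets of $E_0(\bN)$. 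If $E_0(\bN)$ were extremely disconnected, these disjoint open sets would have disjoint closures, and by Urysohn's lemma there would be a continuous $h\colon E_0(\bN)\to[0,1]$ with $h\equiv0$ on $\overline{\iota(E)}$ and $h\equiv1$ on $\overline{\iota(O)}$; viewing $h$ as an element of $\cE_0(\bN)=C(E_0(\bN))$, its restriction to $\bN$ is exactly $\chi_O$, forcing $\AE(\chi_O)=0$. Since $\chi_O=1-\chi_E$ generates the same anqie as $\chi_E$, this gives $\AE(\chi_E)=\AE(\chi_O)=0$, a contradiction. Hence disjoint open sets with non--disjoint closures exist, and $E_0(\bN)$ is not extremely disconnected.
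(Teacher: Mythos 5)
Your reduction of total disconnectedness to the norm-density of finite-range zero-entropy functions in $\cE_0(\bN)$ is exactly the paper's Proposition \ref{the equivalent condition of totally disconnected}, and your argument for the failure of extreme disconnectedness is essentially the paper's Proposition \ref{thm_ari_compa_property}(i) (you phrase it via disjoint open sets with non-disjoint closures and Urysohn, the paper via the indicator of the clopen closure of $\iota(S_1)$; both are correct). The problem is the approximation step itself, which is the real content of the theorem and which you do not prove. Your plan is to produce zero-entropy projections as level sets $\chi_{\{f\ge t\}}$ and to show that the set of bad thresholds has empty interior by passing to invariant measures on $X_f$; you explicitly write ``Granting that good thresholds are dense\dots''. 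That grant is not innocuous. For a fixed invariant $\mu$ the atoms of $\hat f_*\mu$ are countable, but the union of these atom sets over the whole simplex of invariant measures need not have empty interior (already in zero-entropy systems where many points are periodic, every value of $\hat f$ is an atom of some invariant measure), so the quantifier exchange you flag is a genuine obstruction rather than a technicality. Moreover, since the coding map is discontinuous on $\bigcup_{i\ge 0}A^{-i}\partial_t$, relating the topological entropy of the coded subshift back to invariant measures of $(X_f,A)$ requires lifting measures through a map that is only Borel, and you give no argument for this either. As written, the central claim is assumed, not proved.

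The paper closes exactly this gap by a different mechanism: Theorem \ref{implification theorem} (with Lemma \ref{subset with zero entropy} as a variant) works with finite open covers of $\overline{f(\bN)}$ by $\tfrac1N$-balls rather than with partitions and their boundaries. One covers $X_f$ by the induced cylinder covers $\cW_s$, iteratively extracts subcovers of $\bigvee_{j=0}^{s_l-1}(B_f^{t_l})^{-j}\,\cU^{(l)}$ of minimal cardinality at most $e^{(t_l\lambda+2^{-l})s_l}$, and codes each orbit block of length $t_{l+1}$ by the subcover element containing it; Lemma \ref{regular blocks} then bounds the block growth of the coded sequence by $\lambda$. Because open covers, unlike partitions, have no boundary, the threshold problem never arises, and the resulting finite-range $f_N$ satisfies $\AE(f_N)\le \AE(f)$ and $\|f_N-f\|_{l^\infty}\le 1/N$. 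To complete your outline you would need either to actually prove the density of good thresholds (which looks at least as hard as the covering argument) or to switch to the open-cover coding.
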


To further study $\cE_0(\bN)$, we investigate its $K$-groups. The operator $K$-theory provides us with useful tools to learn about the structure of C*-algebras. The $K$-groups can be treated as invariants to distinguish two C*-algebras. 
For example, Elliott showed in \cite{GAE} that the $K_{0}$-group is a complete invariant for approximately finite-dimensional C*-algebras. Another important application of $K$-theory to C*-algebras was presented by Pimsner and Voiculescu \cite{PV}. They proved that the reduced C*-algebra of the free group of two generators has no projection other than $0$ and $1$. We also mention that, in the study of $C^\ast$-dynamics, the $K$-groups are essential for the classification of homomorphisms and dynamical systems (see, e.g., \cite{Lin07}). The $C^\ast$-algebras and the associated topological dynamical systems in this paper are different from those $C^\ast$-dynamics. However, the study of $K$-groups would be helpful. For $\cE_0(\bN)$, we have the following results.

\begin{theorem} \label{thm_K0_group}
The group $K_{0}(\cE_{0}(\mathbb{N}))$ is homeomorphic to the additive group
$\{f\in \cE_0(\mathbb{N}): f(\mathbb{N}) \subseteq \bZ\}$.
\end{theorem}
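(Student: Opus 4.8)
The plan is to reduce the computation to topological $K$-theory and then exploit the total disconnectedness of the underlying space; the wording ``homeomorphic'' should be read as a group isomorphism. First I would recall that $\cE_0(\bN)$ is a unital abelian C*-algebra, so by Gelfand duality $\cE_0(\bN)\cong C(X)$, where $X=E_0(\bN)$ is its (compact Hausdorff) maximal ideal space. By Theorem \ref{arithmetic compactification is totally disconnected}, $X$ is totally disconnected; being also compact Hausdorff, $X$ is then zero-dimensional, i.e. it has a basis of clopen sets. Since $K_0$ of a unital C*-algebra is the Grothendieck group of the abelian monoid $V(C(X))$ of Murray--von Neumann equivalence classes of projections in $\bigcup_n M_n(C(X))$, it suffices to describe this monoid.

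The key step is to show that over a zero-dimensional compact Hausdorff space the equivalence class of a projection is completely determined by its rank function. Given a projection $p\in M_n(C(X))\cong C(X,M_n(\bC))$, the function $r_p(x)=\mathrm{tr}\,p(x)$ is continuous and integer-valued, so $r_p\in C(X,\bZ_{\geq 0})$. Two projections $p,q$ are Murray--von Neumann equivalent precisely when the associated range (sub)bundles of the trivial bundle $X\times\bC^n$ are isomorphic. Now $E_p$ is locally trivial, and around each point zero-dimensionality lets me shrink a trivializing open set to a clopen one; by compactness finitely many such clopen sets cover $X$, and disjointifying them yields a finite clopen partition $X=\bigsqcup_i V_i$ on which $r_p$ is constant and $E_p$ is trivial. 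Hence $E_p$ is isomorphic to the model bundle of rank $r_p$, so the class of $p$ depends only on $r_p$; conversely, since a continuous $\bZ_{\geq 0}$-valued function on compact $X$ has finite image with clopen level sets, every $r\in C(X,\bZ_{\geq 0})$ is realized by a block-coordinate projection. This gives a monoid isomorphism $V(C(X))\cong C(X,\bZ_{\geq 0})$ under pointwise addition, whence $K_0(C(X))\cong C(X,\bZ)$. I expect the passage from local to global triviality via a finite clopen partition to be the main technical point.

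It remains to identify $C(X,\bZ)$ with the integer-valued elements of $\cE_0(\bN)$. Let $\iota:\bN\to X$ be the canonical map, whose image is dense since $X$ is a compactification of $\bN$. Under the Gelfand transform an element $f\in\cE_0(\bN)$ corresponds to $\hat f\in C(X)$ with $\hat f\circ\iota=f$. If $\hat f$ is $\bZ$-valued then $f(\bN)=\hat f(\iota(\bN))\subseteq\bZ$; conversely, if $f(\bN)\subseteq\bZ$ then, because $\bZ$ is closed in $\bC$ and $\iota(\bN)$ is dense, continuity forces $\hat f(X)\subseteq\overline{f(\bN)}\subseteq\bZ$. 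Thus the Gelfand transform restricts to an additive bijection between $\{f\in\cE_0(\bN):f(\bN)\subseteq\bZ\}$ and $C(X,\bZ)$. Combining this with the previous step yields the asserted isomorphism $K_0(\cE_0(\bN))\cong\{f\in\cE_0(\bN):f(\bN)\subseteq\bZ\}$.
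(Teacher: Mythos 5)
Your proposal is correct, but it takes a genuinely different route from the paper. You reduce everything to the topological statement that $K^0$ of a compact, Hausdorff, totally disconnected (hence zero-dimensional) space is $C(X,\bZ)$, taking Theorem \ref{arithmetic compactification is totally disconnected} as the sole input from the theory of anqies; the local-to-global step via a finite clopen partition that you flag as the main technical point is indeed the crux, and it works (there is no circularity, since the total disconnectedness of $E_0(\bN)$ is established in Section \ref{Approximation method for arithmetic functions}, independently of the $K$-theory). The paper instead works directly inside the algebra $\cE_0(\bN)$: it proves a diagonalization lemma (Lemma \ref{diagonalization}) showing every projection in $M_k(\cE_0(\bN))$ is $\sim_0$-equivalent to a diagonal one, normalizes the diagonal to a decreasing chain of characteristic functions (Lemma \ref{simply form}), and then identifies the semigroup $\cD(\cE_0(\bN))$ with $\{f\in\cE_0(\bN):f(\bN)\subseteq\bN\}$ via the trace. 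Both arguments ultimately rest on the same engine --- Lemma \ref{subset with zero entropy}, which produces zero-entropy characteristic functions separating a compact set from the complement of an open set; the paper invokes it directly to build the partitions $\{C_i\}$ of $\bN$, while you invoke it indirectly through the total disconnectedness of $E_0(\bN)$. Your route is shorter modulo standard facts (Serre--Swan and the classification of locally trivial bundles over zero-dimensional spaces) and makes clear that only the topology of $E_0(\bN)$ matters, so it generalizes to any totally disconnected compactification; the paper's route is self-contained at the C*-algebra level, produces explicit canonical representatives of each class, and its partition technique is reused almost verbatim in the proof of Theorem \ref{thm_K1_group}. Your final identification of $C(X,\bZ)$ with the $\bZ$-valued elements of $\cE_0(\bN)$ via density of $\iota(\bN)$ and closedness of $\bZ$ in $\bC$ is also correct.
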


\begin{theorem} \label{thm_K1_group}
The group $K_{1}(\mathcal{E}_{0}(\mathbb{N}))$ is trivial.
\end{theorem}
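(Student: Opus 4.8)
The plan is to exploit the total disconnectedness of $E_{0}(\mathbb{N})$, established in Theorem \ref{arithmetic compactification is totally disconnected}, to realize $\mathcal{E}_{0}(\mathbb{N})\cong C(E_{0}(\mathbb{N}))$ as an inductive limit of finite-dimensional C*-algebras, and then to invoke the continuity of $K$-theory together with the vanishing of $K_{1}$ for finite-dimensional C*-algebras. First I would fix the Gelfand identification $\mathcal{E}_{0}(\mathbb{N})\cong C(X)$ with $X=E_{0}(\mathbb{N})$ compact, Hausdorff, and totally disconnected. Then $X$ admits a basis of clopen sets, and I consider the family $\Lambda$ of all finite partitions $\mathcal{P}=\{U_{1},\ldots,U_{k}\}$ of $X$ into nonempty clopen sets, directed by refinement. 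To each such $\mathcal{P}$ I attach the finite-dimensional commutative C*-subalgebra $A_{\mathcal{P}}=\operatorname{span}\{\chi_{U_{1}},\ldots,\chi_{U_{k}}\}\cong\mathbb{C}^{k}$ of functions that are constant on each block. Refinement induces unital inclusions $A_{\mathcal{P}}\hookrightarrow A_{\mathcal{Q}}$, so $(A_{\mathcal{P}})_{\mathcal{P}\in\Lambda}$ forms a directed system.

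The key density step is that $\bigcup_{\mathcal{P}\in\Lambda}A_{\mathcal{P}}$, the algebra of locally constant functions, is dense in $C(X)$. I would derive this from the Stone--Weierstrass theorem: the clopen characteristic functions span a self-adjoint unital subalgebra that separates the points of the totally disconnected space $X$, since any two distinct points lie in complementary clopen sets. Consequently $C(X)=\varinjlim_{\mathcal{P}\in\Lambda}A_{\mathcal{P}}$ as C*-algebras.

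Finally I would apply the continuity of $K$-theory under inductive limits, $K_{1}(\varinjlim_{\mathcal{P}}A_{\mathcal{P}})\cong\varinjlim_{\mathcal{P}}K_{1}(A_{\mathcal{P}})$; since each $K_{1}(A_{\mathcal{P}})=K_{1}(\mathbb{C}^{k})=0$, the limit vanishes and hence $K_{1}(\mathcal{E}_{0}(\mathbb{N}))=0$. The point requiring genuine care is that $X$ is neither metrizable nor second countable, so $\Lambda$ is an uncountable directed set and the inductive limit is non-sequential; I would therefore emphasize that continuity of $K$-theory holds for arbitrary (not merely countable) directed systems, the essential reason being that every unitary over the limit algebra is norm-approximable by, and thus homotopic to, a unitary lying in some finite stage $A_{\mathcal{P}}$. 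As an alternative, more topological route, one may compute $K_{1}(C(X))=[X,U(\infty)]$ directly: every continuous map from the totally disconnected $X$ into $U(n)$ is uniformly approximated by a locally constant map, which is null-homotopic because $U(n)$ is path-connected, again forcing $K_{1}=0$. This also mirrors, on the $K_{1}$ side, the structure underlying Theorem \ref{thm_K0_group}.
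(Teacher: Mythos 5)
Your argument is correct, but it takes a genuinely different route from the paper. The paper never passes through the topology of $E_{0}(\mathbb{N})$ at this stage: it proves directly, by induction on the matrix size $k$, that every unitary in $\mathcal{U}_{k}(\mathcal{E}_{0})$ is homotopic to $I_{k}$, using Lemma \ref{subset with zero entropy} to cut $\mathbb{N}$ into finitely many zero-entropy pieces on which some entry of the first column is bounded away from $0$, and then performing an explicit Gram--Schmidt-type reduction with concrete homotopies (via the criterion $sp(u)\neq S^{1}$ of Lemma \ref{some basic results about unitary group}). You instead quote Theorem \ref{arithmetic compactification is totally disconnected} --- already established in Section \ref{Approximation method for arithmetic functions}, so there is no circularity --- and reduce to the general fact that $K_{1}(C(X))=0$ for $X$ compact Hausdorff and totally disconnected. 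Both of your justifications of that fact are sound; the second one (approximate a unitary in $M_{n}(C(X))$ within distance $2$ by a locally constant unitary-valued function and contract it piecewise using path-connectedness of $U(n)$) is preferable, since it sidesteps invoking continuity of $K$-theory over an uncountable directed system, which is the one point you rightly flag as needing care because $X$ is not metrizable; note also that the paper's definition of total disconnectedness (points separated by clopen sets) together with compactness is exactly what furnishes the finite clopen partitions you use. What your route buys is brevity and conceptual clarity, and it delivers $K_{0}(\mathcal{E}_{0})\cong C(E_{0}(\mathbb{N}),\mathbb{Z})$ by the same token; what the paper's route buys is that the diagonalization and reduction machinery (Lemmas \ref{diagonalization}, \ref{simply form}, \ref{the unitary group}, \ref{k=2 unitary group}) is developed once at the level of arithmetic functions and reused for the $K_{0}$ computation. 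Ultimately the two proofs rest on the same engine, since total disconnectedness is itself deduced from Theorem \ref{implification theorem}, i.e., from the same approximation-by-finite-range technique that powers Lemma \ref{subset with zero entropy}.
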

As a corollary of Theorem \ref{thm_K1_group}, we have the following result.
\begin{corollary}\label{zero entropy exponential expression}
For any arithmetic function $f$ with zero anqie entropy and $|f(n)|=1$ for any $n\in \mathbb{N}$, there is a real-valued function $g(n)$ with $\AE(g)=0$ such that $f(n)=\exp(\i g(n))$.
\end{corollary}
In the process to prove the above results, we develop an approximation method for arithmetic functions. We shall prove the following theorem in Section \ref{Approximation method for arithmetic functions}.
\begin{theorem}\label{theorem_finite_appoximate}
Suppose that $f$ is a bounded arithmetic function with anqie entropy $\lambda$ $(0\leq \lambda<+\infty)$. Then for any $N\geq 1$, there is an arithmetic function $f_{N}$ with finite range, such that $\AE(f_{N}) \leq \lambda$ and $\|f_{N}-f\|_{l^{\infty}}\leq \frac{1}{N}$.
\end{theorem}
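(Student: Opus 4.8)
The plan is to approximate $f$ by a \emph{symbolic coding} through a fine partition of the plane, and then to control the entropy of that coding by comparing it with the dynamics already present in $\cA_f$. Write $K=\overline{f(\bN)}\subseteq\bC$, which is compact since $f$ is bounded. I would realize the anqie $\cA_f$ as $C(X_f)$, where $X_f=\overline{\{(f(n),f(n+1),\ldots):n\in\bN\}}\subseteq K^{\bN}$ is the orbit closure under the shift $A$, with $\pi_0:X_f\to K$ the projection onto the first coordinate; by definition $\AE(f)=h(A)=\lambda$, the topological entropy of $A$. Now tile $\bC$ by half-open squares of a common small mesh of diameter $\le 1/N$, translated by an offset $t$ to be chosen, let $\rho_t:\bC\to\bC$ send each point to the centre of its square, and set $f_N:=\rho_t\circ f$. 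Then $f_N$ has finite range (only finitely many squares meet $K$) and $\|f_N-f\|_{l^\infty}\le 1/N$ for \emph{every} $t$; the entire difficulty is to arrange $\AE(f_N)\le\lambda$.

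For the entropy bound I would pass to the joint function $(f,f_N):\bN\to K\times\rho_t(K)$ and its orbit closure $Z_t$ under the shift. Projection onto the two coordinates gives continuous, shift-equivariant surjections $\pi_1:Z_t\to X_f$ and $\pi_2:Z_t\to X_{f_N}$. Since topological entropy does not increase under factor maps, $\AE(f_N)=h(X_{f_N})\le h(Z_t)$, while $h(Z_t)\ge h(X_f)=\lambda$ because $\pi_1$ is onto. Hence it suffices to prove $h(Z_t)=\lambda$, i.e.\ that $\pi_1$ is an entropy-preserving (principal) factor map. Intuitively this should hold because the second coordinate $f_N=\rho_t\circ f$ is a \emph{function} of the first, so $Z_t$ carries no information beyond $X_f$ --- except at points whose first coordinate lands on the grid boundary $B_t$, where $\rho_t$ is discontinuous and the coding becomes ambiguous.

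To make this precise I would invoke the variational principle together with the Abramov--Rokhlin addition formula: for any $A$-invariant measure $\tilde\mu$ on $Z_t$ with push-forward $\mu=(\pi_1)_*\tilde\mu$ on $X_f$, one has $h_{\tilde\mu}(Z_t)=h_\mu(X_f)+h_{\tilde\mu}(Z_t\mid X_f)$. If the offset is chosen so that $\mu(\pi_0^{-1}(B_t))=0$, then by shift-invariance $\mu$-almost every base point has \emph{all} of its coordinates off $B_t$, so the second coordinate is $\mu$-almost surely determined by the first; the relative entropy then vanishes and $h_{\tilde\mu}(Z_t)=h_\mu(X_f)\le\lambda$. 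Taking the supremum over $\tilde\mu$ and applying the variational principle on $Z_t$ would give $h(Z_t)\le\lambda$, as required, and hence $\AE(f_N)\le\lambda$.

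The crux, and the step I expect to be the main obstacle, is producing a single offset $t$ for which $\mu(\pi_0^{-1}(B_t))=0$ for \emph{all} the invariant measures that matter. For a fixed $\mu$ this is routine, since $(\pi_0)_*\mu$ charges only countably many horizontal and countably many vertical lines, so almost every offset avoids them; but $X_f$ may carry uncountably many invariant measures (for instance uncountably many fixed points), whose exceptional lines can together fill an interval, so no offset avoids all of them. The point to establish is that such unavoidable measures are always of \emph{zero entropy and non-oscillating}: a measure that simultaneously charges $B_t$ and contributes positive relative entropy would force $f$ to oscillate across a grid line along a recurrent orbit, and such oscillation already registers in the count of $\epsilon$-separated orbit segments that computes $\lambda$. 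I would make this quantitative by a Fubini / Borel--Cantelli argument over the torus of offsets, bounding the expected number of boundary crossings --- equivalently the expected number of distinct symbolic $n$-windows of $\rho_t\circ f$ --- and showing that for Lebesgue-almost every $t$ the exponential growth rate of distinct symbolic windows does not exceed $\lim_{\epsilon\to0}$ of the growth rate of $\epsilon$-separated windows of $f$, which is exactly $\lambda$. Any offset in this full-measure set then yields the desired $f_N$.
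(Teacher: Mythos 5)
Your reduction to the joint orbit closure $Z_t$ and the use of the variational principle plus Abramov--Rokhlin for a \emph{single} invariant measure are sound, but the step you yourself flag as the crux --- producing one offset $t$ that works for all relevant invariant measures via a Fubini/Borel--Cantelli argument over offsets --- is not merely the main obstacle: it is false. A partition of the range, unlike an open cover of the orbit closure, has a boundary, and the coding of $f$ by a range partition can have strictly larger block growth than $\AE(f)$ for \emph{every} translate of that partition. Concretely, take $c_1,c_2,\ldots$ dense in $[0,1]$, let $\epsilon_m\downarrow 0$ and $l_m\uparrow\infty$, and build a real-valued $f$ as a concatenation of blocks, the $m$-th block consisting of very long sub-blocks indexed by an $\epsilon_m$-dense finite subfamily of the $c_i$, where the sub-block attached to $c_i$ takes the values $c_i\pm\epsilon_m$ and realizes every $\pm 1$-word of length $l_m$. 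With sub-blocks long enough, every length-$n$ window of $f$ is, up to an error tending to $0$, a two-valued step sequence $(c,\ldots,c,c',\ldots,c')$, so the $(n,\epsilon)$-separated counts grow polynomially in $n$ and $\AE(f)=0$; yet for every level $c\in[0,1]$ the sequence $\chi_{\{n:\,f(n)\ge c\}}$ contains every finite $\{0,1\}$-word and hence has anqie entropy $\log 2$ by Lemma \ref{calculation formula of entropy}. Since the square-coding $\rho_t\circ f$ determines $\chi_{\{n:\,f(n)\ge c\}}$ for every grid level $c$, and every offset places some grid line in $[0,1]$, one gets $\AE(\rho_t\circ f)\ge \log 2>0=\AE(f)$ for \emph{all} $t$. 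In your language: $Z_t$ contains a full $2$-shift inside the fiber of $\pi_1$ over the fixed point $(c,c,\ldots)$, carried by an invariant measure with zero base entropy and relative entropy $\log 2$, and no offset removes all such measures. Your first-moment heuristic also fails quantitatively: the boundary-crossing events at the different coordinates of a window share the same randomness $t$, so one bad offset makes every coordinate of a near-constant window ambiguous at once, and the expected number of distinct codings acquires a factor exponential in $n$. The oscillation responsible is of amplitude $\epsilon_m\to 0$, so it never registers in the $\epsilon$-separated counts that compute $\lambda$, contrary to the principle you hoped to establish.

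This is precisely why the paper's proof (Theorem \ref{implification theorem}) does not code $f$ through a partition of its range. It covers the orbit closure $X_f$ by open cylinder covers built from $1/N$-balls, extracts minimal subcovers along a rapidly growing hierarchy of time scales $t_l$ with cardinalities controlled by $e^{(t_l\lambda+2^{-l})s_l}$, and assigns to each time window one element of the chosen subcover; the label given to time $n$ depends on the whole future window $(f(n),f(n+1),\ldots)$, not only on which cell of a range partition contains $f(n)$, and the block count of the resulting symbolic sequence is bounded via Lemma \ref{regular blocks}. In the example above that construction never introduces a cut through $c_i$: all windows in a given sub-block lie in a single element of the minimal subcover and receive the same label. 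A partition-based argument could only be salvaged for systems admitting partitions with small boundaries, which general finite-entropy orbit closures of this kind need not possess; the open-cover construction sidesteps the issue entirely.
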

In the following, we give some applications of the above conclusion.

\subsection{Applications}
Let $\mathcal{F}_{0}(\mathbb{N})$ be the subset of $l^{\infty}(\mathbb{N})$ consisting of functions with
zero anqie entropy and finite ranges. Then as a simple corollary of Theorem \ref{theorem_finite_appoximate},
$\mathcal{F}_{0}(\mathbb{N})$ is dense in $\mathcal{E}_{0}(\mathbb{N})$ relative to the norm topology. Note that a
function in $\cF_0(\bN)$ belongs to the algebra generated by $\{0,1\}$-valued functions with zero anqie
entropy (see Proposition \ref{01 valued function}). One can deduce from Theorem \ref{theorem_finite_appoximate}
that Sarnak's M\"{o}bius disjointness conjecture is true if and only if
$\lim\nolimits_{N\rightarrow \infty}N^{-1}\sum\nolimits_{n=1}^N\mu(n)f(n)=0$ for any $\{0,1\}$-valued
arithmetic function with $\AE(f)=0$ (see also \cite[Lemmas 4.28, 4.29]{AKLR}, where a proof is given via weak* convergence of invariant measures).

We also generalize the anqie entropy of an arithmetic function to that of a map $f:\mathbb{N}\rightarrow X$, where $X$ is a compact Hausdorff space. Let $X_{f}$ be the closure of the set $\{(f(n),f(n+1),\ldots):n\in \mathbb{N}\}$ in $X^{\mathbb{N}}$ and $B_{f}$ the Bernoulli shift $B$ restricted to the space $X_{f}$. We define the \emph{anqie entropy} of $f$ to be the topological entropy $h(B_{f})$. Note that this definition coincides with the definition of the anqie entropy of arithmetic functions when $X\subseteq \mathbb{C}$ (see Theorem \ref{thm_dynamicalGN}). As an application of Theorem \ref{theorem_finite_appoximate}, we show the following approximation result for topological dynamical systems in Section \ref{Approximation method for arithmetic functions}.

\begin{proposition} \label{approximation result of orbit}
Let $(X,d)$ be a compact metric space and $T$ a continuous map on $X$ with topological entropy $\lambda$ ($\lambda\geq 0$). Then for any $x\in X$ and $\epsilon>0$, there is a map $f$ from $\mathbb{N}$ to the set $\{T^{n}x: n\in \mathbb{N}\}$ with finite range, such that the anqie entropy of $f$ is less than or equal to $\lambda$ and $\sup_{n}d(T^{n}x,f(n))<\epsilon$.
\end{proposition}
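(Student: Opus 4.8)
The plan is to separate the conceptual content---the computation of the anqie entropy of the orbit itself---from the approximation, which is then handled by Theorem \ref{theorem_finite_appoximate}. If $\lambda=+\infty$ the entropy bound is vacuous, and a finite-range $\epsilon$-approximation taking values in the orbit exists trivially (choose a finite $\epsilon$-net of the compact orbit closure consisting of orbit points and round $T^nx$ to the nearest one), so I assume $0\le\lambda<+\infty$. The first and main step is to show that the orbit map $g\colon\bN\to X$, $g(n)=T^nx$, already has anqie entropy at most $\lambda$. Consider the orbit coding map $\phi\colon X\to X^{\bN}$, $\phi(y)=(y,Ty,T^2y,\ldots)$; it is continuous and injective, hence a homeomorphism onto its image since $X$ is compact, and it intertwines $T$ with the Bernoulli shift, $B\circ\phi=\phi\circ T$. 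As $\{(g(n),g(n+1),\ldots):n\in\bN\}=\phi(\{T^nx:n\in\bN\})$ and $\phi$ carries closures to closures, the space $X_g$ of the generalized definition equals $\phi(Y)$, where $Y=\overline{\{T^nx:n\in\bN\}}$ is the orbit closure. Thus $B_g=B|_{X_g}$ is topologically conjugate via $\phi$ to $T|_Y$, so $\AE(g)=h(B_g)=h(T|_Y)\le h(T)=\lambda$, using that topological entropy is a conjugacy invariant and is monotone under restriction to a closed invariant subset.

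Next I would reduce the desired approximation to Theorem \ref{theorem_finite_appoximate}. Because $X$ is a compact metric space, finitely many continuous functions resolve it at scale $\epsilon$: fixing a finite $\epsilon$-net $p_1,\ldots,p_k$ of $Y$ and setting $\psi_i=d(\cdot,p_i)$, there is $\delta>0$ with $\max_i|\psi_i(y)-\psi_i(z)|<\delta\Rightarrow d(y,z)<\epsilon$. The scalar functions $g_i=\psi_i\circ g$ generate an anqie that is a topological factor of $(Y,T|_Y)$ via $\Psi=(\psi_1,\ldots,\psi_k)$, so $\AE(g_1,\ldots,g_k)\le h(T|_Y)\le\lambda$. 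Applying Theorem \ref{theorem_finite_appoximate} (in its simultaneous, vector-valued form; see below) to this family with $1/N<\delta$ yields finite-range functions $f_1,\ldots,f_k$ with $\AE(f_1,\ldots,f_k)\le\lambda$ and $\sup_i\|f_i-g_i\|_{l^\infty}\le 1/N$, and the associated finite-range map into $\Psi(Y)$ then lies within $\epsilon$ of the orbit while still having anqie entropy $\le\lambda$.

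Finally I would recode the finite range back into the orbit. The resulting finite-range map $f_0$ has values $c_1,\ldots,c_m$, each attained at some index $n_j$ with $d(c_j,T^{n_j}x)\le 1/N$; replacing $c_j$ by the orbit point $y_j=T^{n_j}x$ is a fixed relabeling $\rho$ of the finite alphabet, so $f=\rho\circ f_0$ is a factor of $f_0$ and hence $\AE(f)\le\AE(f_0)\le\lambda$, while $\sup_n d(T^nx,f(n))\le 2/N<\epsilon$ once $N$ is large. By construction $f$ has finite range contained in $\{T^nx:n\in\bN\}$, which is exactly the assertion.

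The main obstacle is the vector-valued strengthening of Theorem \ref{theorem_finite_appoximate} invoked above. One cannot apply the scalar theorem to each $g_i$ separately, since subadditivity (Theorem \ref{thm_entropy_additive}) would only bound the joint entropy of the approximants by $k\lambda$; what is required is a single finite-range approximation of the whole family whose \emph{joint} anqie entropy still does not exceed $\lambda$. I expect this to follow by rerunning the proof of Theorem \ref{theorem_finite_appoximate} verbatim for maps into a compact metric space (here $\Psi(Y)\subseteq\bC^k$, or $X_g\subseteq X^{\bN}$), since that argument only uses the symbolic dynamics of $(X_g,B_g)$ and not the scalar nature of the values; alternatively, the orbit map $g$ can be fed directly into such an $X$-valued version, bypassing the functions $\psi_i$ altogether. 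Matching the resolution scale $\delta$ with the approximation error $1/N$ is then routine.
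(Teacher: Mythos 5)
Your proposal is correct and is essentially the paper's argument: identify $(X_g,B_g)$ with $(\overline{\mathcal{O}}_x,T|_{\overline{\mathcal{O}}_x})$ via the orbit coding $\phi(y)=(y,Ty,T^2y,\ldots)$ to get $\AE(g)\le\lambda$, then apply the finite-range approximation theorem. The ``main obstacle'' you flag is not one in the paper, because Theorem \ref{implification theorem} is already stated and proved exactly in the generality you need (maps from $\mathbb{N}$ into a compact metric space, with the approximant's range contained in $f(\mathbb{N})$, hence in the orbit), so your preferred alternative of feeding $g$ directly into an $X$-valued version --- bypassing the scalar functions $\psi_i$ and the relabeling step --- is precisely what the paper does.
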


We remark that similar results can be proved for $\mathbb{Z}$ and $E_{0}(\mathbb{Z})$.  Note that the additive map $A$ is invertible on $E_{0}(\mathbb{Z})$, but not on $E_{0}(\mathbb{N})$.  So the two topological dynamical systems $(E_{0}(\mathbb{Z}),A)$ and $(E_{0}(\mathbb{N}),A)$ are different. Many questions for $E_{0}(\mathbb{Z})$ may have easier answers than those for $E_{0}(\mathbb{N})$. Moreover, we may ask whether the similar results in this paper hold for other amenable topological semigroups.

This paper is organized as follows. In Section 2, we list some frequently used  notation and prove some preliminary results. In Section 3, we introduce the definition of anqie entropy. Some basic properties of the anqie entropy are also discussed. Comparisons of the anqie entropy with Shannon's entropy are presented in Section \ref{properties of anqie entropy}. Theorems \ref{thm_entropy_additive}, \ref{infinity entropy is dense in the set of all functions}, and \ref{thm_semi_continuous} are proved in this section. In Section \ref{the arithmetic compactification of natural numbers}, we investigate the structure of the arithmetic compactification $E_0(\bN)$ of $\bN$. We discuss the approximation method for maps from $\mathbb{N}$ to compact Hausdorff spaces in Section \ref{Approximation method for arithmetic functions}, where Theorems \ref{arithmetic compactification is totally disconnected}, \ref{theorem_finite_appoximate}, and Proposition \ref{approximation result of orbit} are proved. In Section \ref{kgroups}, we compute the $K_{0}$-group and $K_{1}$-group of the space of all continuous functions on $E_{0}(\mathbb{N})$. Corollary \ref{zero entropy exponential expression} is shown.

Some notions in this paper have been introduced by the survey paper of Liming Ge \cite{Ge}, in which the author announced that proofs of results there would appear later (see the introduction of \cite{Ge} and references there in). Sections 3, 4, 5 of this paper are based on results from the author's Ph.D. thesis written at the University of New Hampshire under the supervision of Professor Liming Ge. Results in Sections 6 and 7 are new. We refer to \cite{Conway,KR} for basics and preliminary results in operator algebra, to \cite{Gl,Wal82} for that in dynamical systems, and to \cite{B,RLL} for that in $K$-theory for C*-algebras.

\emph{Notation}. For an arithmetic function $f$, we use $\overline{f(\mathbb{N})}$ to denote the closure of $f(\mathbb{N})$ in $\mathbb{C}$. For a compact Hausdorff space $X$, denote by $X^{\mathbb{N}}$ the Cartesian product of $X$ indexed by $\mathbb{N}$. The topology on $X^{\mathbb{N}}$ we concern in this paper is always the product topology. For a finite set $C$, the notation $|C|$ means the cardinality of $C$. For a subset $R$ of $\bN$, we write $\chi_R$ for the characteristic function defined on $R$. The symbol $\i$ denotes the imaginary unit $\sqrt{-1}$.

\bigskip

\textbf{Acknowledgments}. I am very grateful to my advisor Professor Liming Ge for encouraging me in this research and for his guidance. I would like to thank Professor Arthur Jaffe for his support; Dr. Boqing Xue and Weichen Gu for their valuable discussions. I heartly thank Professor Jinxin Xue for his very helpful comments and suggestions on the manuscript. This research was supported in part by the University of New Hampshire, by the Academy of Mathematics and Systems Science of the Chinese Academy of Sciences and by Grant TRT 0159 from the Templeton Religion Trust, and by the fellowship of China Postdoctoral Science Foundation 2020M670273.

\section{Preliminaries}
In this section, we prove some preliminary results. First, we list some notation that will be used.

Let $\mathcal{H}$ be a Hilbert space. Denote by $\mathcal{B}(\mathcal{H})$ the algebra consists of all bounded linear operators on $\mathcal{H}$.
By Riesz representation theorem, for any $T\in\mathcal{B}(\mathcal{H})$,
there is a unique bounded linear operator $T^{*}$ satisfying $\langle Tx,y\rangle=\langle x,T^{*}y\rangle$ for any $x,y\in\mathcal{H}$.
Such a $T^*$ is called the \emph{adjoint of $T$}. We call a norm-closed *-subalgebra of $\mathcal{B}(\mathcal{H})$ a \emph{C*-algebra}.

Suppose that $\mathcal{A}$ is a unital C*-algebra. We use $\mathcal{A}^{\sharp}$ to denote the set of all bounded linear functionals on $\mathcal{A}$. Denote by $(\mathcal{A}^{\sharp})_{1}$ the unit ball in $\mathcal{A}^{\sharp}$, i.e., $(\mathcal{A}^{\sharp})_{1}=\{\rho\in \mathcal{A}^{\sharp}:\|\rho\|\leq 1\}$. In general, the space $\mathcal{A}^{\sharp}$ can be equipped with many topological structures. Among them, the norm topology and weak* topology are used most frequently. For $\rho\in \mathcal{A}^{\sharp}$, its norm is given by $\|\rho\|=\sup_{x\in \mathcal{A},\|x\|\leq 1}|\rho(x)|$. When $x\in \mathcal{A}$, the equation $\sigma_{x}(\rho)=|\rho(x)|$ defines a semi-norm on $\mathcal{A}^{\sharp}$. The family $\{\sigma_{x}: x\in \mathcal{A}\}$ of semi-norms determines the \emph{weak* topology} on $\mathcal{A}^{\sharp}$. Note that each $\rho_{0}\in \mathcal{A}^{\sharp}$ has a base of neighborhoods consisting of sets of the form $\{\rho\in \mathcal{A}^{\sharp}: |\rho(x_{j})-\rho_{0}(x_{j})|<\epsilon\}$ ($j=1,\ldots,m$), where $\epsilon>0$ and $x_{1},\ldots,x_{m}\in \mathcal{A}$.

A non-zero linear functional $\rho$ on an abelian C*-algebra $\A$ is called \emph{a multiplicative state} if for any $A,B\in \mathcal{A}$, $\rho(AB)=\rho(A)\rho(B)$.

Suppose now that $\mathcal{A}$ is an abelian C*-algebra and $X$ is its maximal ideal space. We define the map $\gamma: \mathcal{A}\rightarrow C(X)$ by
\begin{equation}\label{Gelfand transform}
\gamma(f)(\rho)=\rho(f), \quad f\in \cA, \,\rho \in X.
\end{equation}
Here we use the fact that $X$ is also the space of all multiplicative states of $\mathcal{A}$. The map $\gamma$ is known as the \emph{Gelfand transform} from $\mathcal{A}$ onto $C(X)$, which is a *-isomorphism (see, e.g., \cite[Theorem 2.1]{Conway}).

It is known that the above Hausdorff space $X$ is weak* compact. Next we show that $\mathcal{A}$ is countably generated as an abelian C*-algebra if and only if $X$ is metrizable and the topology induced by the metric coincides with the weak* topology on $X$. The sufficient part directly follows from \cite[Remark 3.4.15]{KR}. The necessary part is showed in the following proposition.

\begin{proposition}\label{metrizable}
Let $\mathcal{A}$ be an abelian unital C*-algebra. If $\mathcal{A}$ is countably generated, then $(\mathcal{A}^{\sharp})_{1}$ is metrizable and the toplology induced by the metric is equivalent to the weak* topology on $(\mathcal{A}^{\sharp})_{1}$. In particular,
the maximal ideal space of $\mathcal{A}$ is a compact metrizable space.
\end{proposition}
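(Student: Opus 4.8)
The plan is to exhibit an explicit metric on $(\mathcal{A}^{\sharp})_{1}$ built from a countable dense set and then use a compactness argument to identify the metric topology with the weak* topology. First I would pass from ``countably generated'' to ``separable'': if $\{a_{1},a_{2},\ldots\}$ generates $\mathcal{A}$ as a C*-algebra, then the collection of all $*$-polynomials in the $a_{k}$ with coefficients in $\mathbb{Q}+\i\mathbb{Q}$ is a countable set whose norm closure is all of $\mathcal{A}$, so $\mathcal{A}$ is separable. The closed unit ball of $\mathcal{A}$, being a subspace of the separable metric space $\mathcal{A}$, is then itself separable; I would fix a countable norm-dense subset $\{x_{n}\}_{n\ge 1}$ of this unit ball. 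With such a sequence in hand I would define
\[
d(\rho,\rho')=\sum_{n=1}^{\infty}\frac{1}{2^{n}}\,|\rho(x_{n})-\rho'(x_{n})|,\qquad \rho,\rho'\in(\mathcal{A}^{\sharp})_{1}.
\]
Since $\|x_{n}\|\le 1$ and $\|\rho\|,\|\rho'\|\le 1$, each term is at most $2^{1-n}$, so the series converges and $d$ is well defined.

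Next I would verify that $d$ is genuinely a metric. Symmetry and the triangle inequality follow term by term from the corresponding properties of $|\cdot|$. The only nontrivial point is that $d$ separates points: if $d(\rho,\rho')=0$, then $\rho(x_{n})=\rho'(x_{n})$ for every $n$, and because $\{x_{n}\}$ is norm-dense in the unit ball while $\rho$ and $\rho'$ are norm-continuous, the two functionals agree on the whole unit ball, hence on all of $\mathcal{A}$, so $\rho=\rho'$.

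The heart of the argument, and the step I expect to require the most care, is showing that the metric topology coincides with the weak* topology on $(\mathcal{A}^{\sharp})_{1}$. Rather than comparing neighborhood bases in both directions, the strategy is to exploit compactness. By the Banach--Alaoglu theorem, $(\mathcal{A}^{\sharp})_{1}$ is weak* compact, and the metric topology is Hausdorff; hence it suffices to prove that the identity map $\iota\colon ((\mathcal{A}^{\sharp})_{1},\text{weak}^{*})\to((\mathcal{A}^{\sharp})_{1},d)$ is continuous, since a continuous bijection from a compact space onto a Hausdorff space is automatically a homeomorphism. Continuity of $\iota$ rests on a uniform tail estimate: given $\rho_{0}$ and $\epsilon>0$, choose $N$ with $\sum_{n>N}2^{1-n}=2^{1-N}<\epsilon/2$, and set $U=\{\rho:|\rho(x_{j})-\rho_{0}(x_{j})|<\epsilon/2,\ 1\le j\le N\}$, a basic weak* neighborhood of $\rho_{0}$. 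For $\rho\in U$ the first $N$ terms contribute less than $(\epsilon/2)\sum_{n\ge 1}2^{-n}=\epsilon/2$ and the tail contributes less than $\epsilon/2$, so $d(\rho,\rho_{0})<\epsilon$. Thus $\iota$ is continuous, hence a homeomorphism, and the two topologies agree.

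Finally, for the assertion about the maximal ideal space $X$ of $\mathcal{A}$: since $\mathcal{A}$ is unital, every multiplicative state $\rho$ satisfies $\rho(1)=1$ and $\|\rho\|=1$, so $X\subseteq(\mathcal{A}^{\sharp})_{1}$. Moreover $X$ is weak* closed, being the set of $\rho$ satisfying $\rho(1)=1$ together with $\rho(AB)-\rho(A)\rho(B)=0$ for all $A,B\in\mathcal{A}$, conditions cut out by weak*-continuous functions of $\rho$. A closed subspace of a compact metrizable space is again compact and metrizable, which gives the final conclusion.
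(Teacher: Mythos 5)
Your proof is correct and follows essentially the same route as the paper: reduce to separability, fix a countable norm-dense subset of the unit ball, and use the weighted series $d(\rho,\rho')=\sum_n 2^{-n}|\rho(x_n)-\rho'(x_n)|$ as the metric, with Banach--Alaoglu supplying compactness. The one small divergence is that where the paper verifies both directions of the equivalence of topologies directly (using the uniform bound $\|\rho_\alpha\|\le 1$ and density to upgrade convergence on $\{x_n\}$ to weak* convergence), you obtain the reverse direction for free from the fact that a continuous bijection from a compact space onto a Hausdorff space is a homeomorphism; both arguments are valid.
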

\begin{proof}
Since $\mathcal{A}$ is countably generated, there is a countable dense subset in $\mathcal{A}$.
Let $\{g_{1},g_{2},\ldots\}$ be a dense subset of $(\mathcal{A})_1$, the unit ball in $\mathcal{A}$. For any $\rho_{1},\rho_{2}\in (\mathcal{A}^{\sharp})_{1}$,
we define $d(\rho_{1},\rho_{2})=\sum_{i=1}^{\infty}\frac{|(\rho_{1}-\rho_{2})(g_{i})|}{2^{i}}$.
It is not hard to check that $d$ is a metric on $(\mathcal{A}^{\sharp})_{1}$.
Moreover, for any net $\{\rho_{\alpha}\}$ of elements of $(\mathcal{A}^{\sharp})_{1}$, the net $\{d(\rho_{\alpha},\rho)\}$ converges to $0$ is equivalent to the condition that,
for any $i\geq 1$, the net $\{\rho_{\alpha}(g_{i})\}$ converges to $\rho(g_{i})$.

Next, we show that the weak* topology is equivalent to the topology induced by the metric $d$ on $(\mathcal{A}^{\sharp})_{1}$.
Suppose that the net $\{\rho_{\alpha}\}$ of elements in $(\mathcal{A}^{\sharp})_{1}$, weak* converges to $\rho$. Then, for any $i\geq 1$,
the net $\{\rho_{\alpha}(g_{i})\}$ converges to $\rho(g_{i})$. Thus the net $\{d(\rho_{\alpha},\rho)\}$ converges to $0$.
Conversely, if the net $\{d(\rho_{\alpha},\rho)\}$ converges to $0$, where $\rho_{\alpha}\in (\mathcal{A}^{\sharp})_{1}$, then $\{\rho_{\alpha}(g_{i})\}$ converges to $\rho(g_{i})$ for any $i\geq 1$.
Note that, for any $\alpha$, $\|\rho_{\alpha}\|\leq 1$. Then for any $g\in \mathcal{A}$, the net
$\{\rho_{\alpha}(g)\}$ converges to $\rho(g)$. So the net $\{\rho_{\alpha}\}$ is weak* convergent to $\rho$ in $(\mathcal{A}^{\sharp})_{1}$.

By Alaoglu-Bourbaki theorem $(\mathcal{A}^{\sharp})_{1}$ is weak* compact. Let $X$ be the maximal ideal space of $\mathcal{A}$.
Then, relative to the weak* topology, $X$ is a closed subset of $(\mathcal{A}^{\sharp})_{1}$. From the above analysis, we see that the weak* topology on $(\mathcal{A}^{\sharp})_{1}$ coincides with the topology induced by the metric $d$ on it. Thus $X$ is a compact metrizable space.
\end{proof}

\begin{proposition}\label{the set of natural numbers are dense in X}
Suppose that $\mathcal{A}$ is a C*-subalgebra of $l^{\infty}(\mathbb{N})$ and $X$ the maximal ideal space of $\mathcal{A}$.
Let $\iota: \mathbb{N}\rightarrow X$ be the map given by $\iota(n):f \mapsto f(n)$, for any $f\in \mathcal{A}$. Then the weak* closure of $\iota(\mathbb{N})$ is $X$ $($write $\overline{\iota(\mathbb{N})} = X$$)$.
\end{proposition}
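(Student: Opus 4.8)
The plan is to argue by contradiction, using the Gelfand transform $\gamma$ of \eqref{Gelfand transform} together with Urysohn's lemma on the compact Hausdorff space $X$. The underlying principle is just Gelfand duality: since $\mathcal{A}$ sits inside $l^\infty(\mathbb{N})$, an element of $\mathcal{A}$ is completely determined by its values on $\mathbb{N}$, so a putative point of $X$ missed by the closure of $\iota(\mathbb{N})$ would have to be detected by a function vanishing on all of $\mathbb{N}$, which must be the zero function.

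First I would record that each $\iota(n)$ genuinely lies in $X$. The point evaluation $f\mapsto f(n)$ is non-zero (it sends the unit to $1$) and multiplicative, since $(fg)(n)=f(n)g(n)$ gives $\iota(n)(fg)=\iota(n)(f)\iota(n)(g)$; hence $\iota(n)$ is a multiplicative state, and $X$ is exactly the set of multiplicative states of $\mathcal{A}$. Thus $\iota(\mathbb{N})\subseteq X$, and $Y:=\overline{\iota(\mathbb{N})}$ is a well-defined weak* closed subset of the weak* compact space $X$. Now suppose toward a contradiction that $Y\subsetneq X$ and fix $\rho_0\in X\setminus Y$. Since $X$ is compact Hausdorff and $Y$ is closed, Urysohn's lemma yields $F\in C(X)$ with $F(\rho_0)=1$ and $F|_{Y}=0$. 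Because $\gamma\colon\mathcal{A}\to C(X)$ is a *-isomorphism, there is a unique $f\in\mathcal{A}$ with $\gamma(f)=F$.

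The key computation is that for every $n\in\mathbb{N}$ one has $\gamma(f)(\iota(n))=\iota(n)(f)=f(n)$, directly from the definitions of $\gamma$ and of $\iota$. As $\iota(n)\in Y$, this forces $f(n)=F(\iota(n))=0$ for all $n$; since $f$ is a bounded sequence on $\mathbb{N}$, it is the zero element of $\mathcal{A}$, whence $F=\gamma(f)=0$, contradicting $F(\rho_0)=1$. Therefore $Y=X$. I expect the only steps requiring care to be the verification that point evaluations are multiplicative states (so that $\iota(\mathbb{N})\subseteq X$) and the implication ``$f(n)=0$ for all $n$ implies $f=0$,'' which rests on $\mathcal{A}$ being a subalgebra of $l^\infty(\mathbb{N})$; the remainder is a routine application of Gelfand duality and Urysohn's lemma, so I anticipate no serious obstacle.
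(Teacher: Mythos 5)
Your argument is correct and coincides with the paper's own proof: both proceed by contradiction, apply Urysohn's lemma to separate a missed point from the closed set $\overline{\iota(\mathbb{N})}$, and then use the Gelfand transform to conclude that the separating function corresponds to an element of $\mathcal{A}$ vanishing on all of $\mathbb{N}$, hence zero. Your additional verification that the point evaluations $\iota(n)$ are multiplicative states is a reasonable (if routine) supplement that the paper leaves implicit.
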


\begin{proof}
 Assume on the contrary that $\overline{\iota(\bN)}\neq X$. Choose $y\in X \setminus \overline{\iota(\bN)}$. By Urysohn's lemma, there is a $G\in C(X)$
 such that $G(y)=1$ and $G(x)=0$ for any $x\in \overline{\iota(\bN)}$. By equation (\ref{Gelfand transform}), for any $n\in \mathbb{N}$, $0=G(\iota(n))=\iota(n)(\gamma^{-1}G)=(\gamma^{-1}G)(n)$. Then $\gamma^{-1}(G)=0$ and $G=0$ correspondingly. This contradicts $G(y)=1$. Hence $\overline{\iota(\bN)}=X$.
 \end{proof}

\begin{proposition}\label{anqie is a shif invariant algebra}
Suppose that $\mathcal{A}$ is a C*-subalgebra of $l^{\infty}(\mathbb{N})$ and $X$ the maximal ideal space of $\mathcal{A}$. Then $\mathcal{A}$ is an anqie of $\mathbb{N}$ if and only if the map $\iota(n)\mapsto \iota(n+1)$ can be extended to a continuous map from $X$ to itself.
\end{proposition}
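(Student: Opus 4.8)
The plan is to exploit Gelfand duality, under which $\mathcal{A}\cong C(X)$ via the transform $\gamma$, so that the algebraic condition ``$\sigma_{A}$-invariance'' is translated into the topological condition ``existence of a continuous self-map of $X$''. The key principle is that a unital $*$-endomorphism of an abelian C*-algebra corresponds, by duality on its maximal ideal space, to a continuous self-map of that space, and conversely. Throughout I would use the weak* density of $\iota(\mathbb{N})$ in $X$ (Proposition \ref{the set of natural numbers are dense in X}) together with the evaluation identity $\iota(n)(f)=f(n)$.

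For the forward direction, suppose $\mathcal{A}$ is an anqie, so $\sigma_{A}(\mathcal{A})\subseteq\mathcal{A}$. First I would check that $\sigma_{A}$ restricts to a unital $*$-endomorphism of $\mathcal{A}$: linearity and adjoint-preservation are immediate, multiplicativity follows from $(\sigma_{A}(fg))(n)=f(n+1)g(n+1)=(\sigma_{A}f)(n)(\sigma_{A}g)(n)$, and $\sigma_{A}$ fixes the constant function $1$. Then I define $A\colon X\to X$ by $A(\rho)=\rho\circ\sigma_{A}$. I would verify that $A(\rho)$ is again a multiplicative state: multiplicativity is inherited from that of $\sigma_{A}$, and $A(\rho)(1)=\rho(1)=1\neq 0$ guarantees it is nonzero, hence a genuine point of $X$. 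Weak* continuity is equally direct: if $\rho_{\alpha}\to\rho$ then $A(\rho_{\alpha})(f)=\rho_{\alpha}(\sigma_{A}f)\to\rho(\sigma_{A}f)=A(\rho)(f)$ for every $f\in\mathcal{A}$. Finally, evaluating on $\iota(n)$ gives $A(\iota(n))(f)=\iota(n)(\sigma_{A}f)=(\sigma_{A}f)(n)=f(n+1)=\iota(n+1)(f)$, so $A(\iota(n))=\iota(n+1)$; thus $A$ is the desired continuous extension.

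For the reverse direction, suppose a continuous $A\colon X\to X$ extends $\iota(n)\mapsto\iota(n+1)$. Given $f\in\mathcal{A}$, the composite $\gamma(f)\circ A$ is continuous on $X$, so $g:=\gamma^{-1}(\gamma(f)\circ A)\in\mathcal{A}$ is well-defined, with $\gamma(g)=\gamma(f)\circ A$. I would then compute its values on $\mathbb{N}$: for every $n$, $g(n)=\iota(n)(g)=\gamma(g)(\iota(n))=\gamma(f)(A(\iota(n)))=\gamma(f)(\iota(n+1))=f(n+1)=(\sigma_{A}f)(n)$. Since an element of $l^{\infty}(\mathbb{N})$ is determined by its values on $\mathbb{N}$, this shows $\sigma_{A}f=g\in\mathcal{A}$, so $\mathcal{A}$ is $\sigma_{A}$-invariant, i.e.\ an anqie.

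The substantive content lies entirely in the forward direction's construction of $A$ and the verification that it is a genuine weak* continuous self-map of $X$ rather than merely of the full state space; the density of $\iota(\mathbb{N})$ then ensures that such an extension, when it exists, is unique, which reconciles the two directions (and shows the map $\iota(n)\mapsto\iota(n+1)$ is automatically well-defined). I expect no serious obstacle: the only points needing care are confirming that $A(\rho)$ is nonzero, so that it lands in the maximal ideal space, which is exactly where unitality of $\mathcal{A}$ is used, and checking weak* continuity, both of which are routine consequences of the definitions.
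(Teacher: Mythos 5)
Your proposal is correct and follows essentially the same route as the paper: one direction defines $A\rho=\rho\circ\sigma_{A}$ and checks weak* continuity and the identity $A(\iota(n))=\iota(n+1)$, the other pulls back $\gamma(f)\circ A$ through the Gelfand transform and evaluates at the points $\iota(n)$. The extra verifications you include (that $\sigma_{A}$ is a unital *-endomorphism and that $\rho\circ\sigma_{A}$ is a nonzero multiplicative state, hence lands in $X$) are details the paper leaves implicit but are entirely consistent with its argument.
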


\begin{proof}
Suppose that the map $\iota(n)\mapsto \iota(n+1)$ is extended to a continuous map on $X$, denoted by $A$. Given $f\in \mathcal{A}$, assume that $F=\gamma(f)$ (see equation (\ref{Gelfand transform})). Note that $F\circ A\in C(X)$. Let $g= \gamma^{-1}(F\circ A)$ in $\mathcal{A}$. Then $g(n)=F\circ A(\iota(n))=F(\iota(n+1))=f(n+1)=\sigma_{A}f(n)$. Thus $g=\sigma_{A}f$ in $\mathcal{A}$. This shows that $\mathcal{A}$ is $\sigma_{A}$-invariant and thus an anqie of $\mathbb{N}$.

On the other hand, suppose that $\mathcal{A}$ is an anqie of $\mathbb{N}$. Let $A$ be the map from $X$ to itself given by $A\rho(f)=\rho(\sigma_{A}f)$ for any $\rho\in X$ and $f\in \mathcal{A}$.
It is easy to see that $A(\iota(n))=\iota(n+1)$.
Now we show that $A$ is a continuous map on $X$. If $\{\rho_{\alpha}\}$ is a weak* convergent net of elements of $X$, with limit $\rho$, then
for any $f\in \A$, $\rho_{\alpha}(\sigma_{A}f)=A\rho_{\alpha}(f)$ converges to $\rho(\sigma_{A}f)=A\rho(f)$.
Thus the net $\{A\rho_{\alpha}\}$ weak* converges to $A\rho$ in $X$. Hence $A$ is the continuous map on $X$ extended by $\iota(n)\mapsto \iota(n+1)$.
\end{proof}

At the end of this section, we recall Weyl's Criterion (see, e.g., \cite[Chapter 21]{Iwa-Kow}) which will be used in this paper to compute some examples. A sequence $\{\alpha_{n}=(x_{n,1},\ldots,
x_{n,k})\}_{n=1}^{\infty}$ in $\mathbb{R}^{k}$ is said to be \emph{uniformly distributed modulo $1$} if for any $[a_j, b_j]\subseteq [0,1]$, $j=1,\ldots, k$, we have $
\lim_{N\rightarrow \infty}N^{-1}\sum_{n=1}^{N}\prod_{j=1}^{k}\chi_{[a_{j},b_{j}]}(\{x_{n,j}\})=\prod_{j=1}^k (b_j-a_j)$, where $\{x\}$ denotes the fractional part of the real number $x$.

\begin{proposition}[Weyl's Criterion]\label{uniformly distributed}
The sequence $\{\alpha_{n}=(x_{n,1},\ldots,
x_{n,k})\}_{n=1}^{\infty}$ in $\mathbb{R}^{k}$ is uniformly distributed modulo $1$ if and only if for any $(l_1,\ldots, l_k)\in \mathbb{Z}^k\setminus \{0\}$,

$$
\lim_{N\rightarrow \infty}\frac1N\sum_{n=1}^N e^{2\pi i (l_1x_{n,1}+\cdots
+l_kx_{n,k})}=0.
$$
\end{proposition}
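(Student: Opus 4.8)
The plan is to prove both implications by reducing each distinguished averaging condition to the averaging condition for a dense class of test functions, the trigonometric polynomials. For a function $\varphi$ on $[0,1]^k$, extended $\mathbb{Z}^k$-periodically to $\mathbb{R}^k$, write $S_N(\varphi)=N^{-1}\sum_{n=1}^N \varphi(\{x_{n,1}\},\ldots,\{x_{n,k}\})$. By definition, uniform distribution is precisely the assertion that $S_N(\chi_B)\to\mathrm{vol}(B)$ for every box $B=\prod_j[a_j,b_j]$, while the exponential criterion is the assertion that $S_N(e_l)\to 0=\int_{[0,1]^k}e_l$ for every character $e_l(x)=e^{2\pi i(l_1 x_1+\cdots+l_k x_k)}$ with $l=(l_1,\ldots,l_k)\in\mathbb{Z}^k\setminus\{0\}$ (the case $l=0$ being trivial, since $e_0\equiv 1$ and $S_N(1)=1$). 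So in both directions the goal is to move between these two families of test functions through the convergence statement $S_N(\varphi)\to\int_{[0,1]^k}\varphi$.

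For the necessity direction I would first observe that, by linearity of $S_N$ and of the integral, uniform distribution gives $S_N(\varphi)\to\int\varphi$ for every finite linear combination of box indicators, i.e. every step function. Since each character $e_l$ is continuous, hence Riemann integrable on $[0,1]^k$, its real and imaginary parts can be squeezed between step functions $\psi_-\le\mathrm{Re}\,e_l\le\psi_+$ whose integrals differ by less than a prescribed $\epsilon$; letting $N\to\infty$ and then $\epsilon\to 0$ yields $S_N(e_l)\to\int e_l=0$, which is the criterion.

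For the sufficiency direction I would run the same machinery in the reverse order. Linearity upgrades the hypothesis to $S_N(P)\to\int_{[0,1]^k}P$ for every trigonometric polynomial $P$, that is, every finite linear combination of the $e_l$. The Stone--Weierstrass theorem then shows that the trigonometric polynomials are uniformly dense in the continuous $\mathbb{Z}^k$-periodic functions on $\mathbb{R}^k$, so a uniform $\epsilon$-approximation argument extends the convergence $S_N(g)\to\int g$ to all such continuous $g$. Finally, given a box $B$, I would sandwich $\chi_B$ between continuous periodic functions $g_-\le\chi_B\le g_+$ with $\int(g_+-g_-)<\epsilon$; the chain $\int g_-\le\liminf_N S_N(\chi_B)\le\limsup_N S_N(\chi_B)\le\int g_+$ then forces $S_N(\chi_B)\to\mathrm{vol}(B)$, which is exactly uniform distribution.

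The main obstacle, shared by both directions, is the passage across the discontinuity of the box indicator: one must construct the one-sided continuous (or step-function) approximants explicitly and check that their integrals converge to $\mathrm{vol}(B)$. This is routine in one variable, using piecewise-linear bump functions supported near the endpoints, and the $k$-dimensional case follows by taking products of the one-variable approximants, so I expect only bookkeeping rather than genuine difficulty. The one further point requiring care is the application of Stone--Weierstrass on the torus $\mathbb{R}^k/\mathbb{Z}^k$, where one verifies that the characters $\{e_l\}$ form a unital, self-adjoint, point-separating subalgebra.
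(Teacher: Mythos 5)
The paper does not prove this proposition at all: it is recalled as a classical fact with a citation to Iwaniec--Kowalski, so there is no internal argument to compare yours against. Your proof is the standard Weyl argument --- step-function sandwiching of the characters for necessity, and Stone--Weierstrass density of trigonometric polynomials among continuous $\mathbb{Z}^k$-periodic functions followed by a continuous sandwich of the box indicator for sufficiency --- and it is correct; the only points needing care are the ones you already flag, namely the one-sided approximants near the box boundary (handled by products of one-variable piecewise-linear bumps) and the verification that the characters form a unital, self-adjoint, point-separating subalgebra of $C(\mathbb{R}^k/\mathbb{Z}^k)$.
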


We also need the following well-known result (see, e.g., \cite[Exercise 11.1.21]{Mur}).
\begin{lemma}\label{the polynomial is uniformly distributed}
Let $P(n)=a_{d}n^{d}+a_{d-1}n^{d-1}+\cdot\cdot\cdot+a_{1}n+a_{0}$ be a polynomial with real coefficients. Assume that at least one coefficient $a_{i}$
with $i\geq 1$ is irrational. Then the sequence of fractional parts of $P(n)$ is uniformly distributed modulo 1.
\end{lemma}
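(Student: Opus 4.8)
The plan is to deduce the statement from Weyl's Criterion (Proposition \ref{uniformly distributed}, in the one-dimensional case $k=1$) by an induction on the degree $d$, using van der Corput's differencing to lower the degree. By Proposition \ref{uniformly distributed} it suffices to show that for every nonzero integer $l$ one has $N^{-1}\sum_{n=1}^N e^{2\pi i\, l P(n)}\to 0$. Since $lP$ is again a polynomial of the same degree and, if $a_i$ (with $i\ge 1$) is irrational then so is $l a_i$ (because $l\in\mathbb{Z}\setminus\{0\}$), it is enough to prove that $S_N(P):=N^{-1}\sum_{n=1}^N e^{2\pi i P(n)}\to 0$ for every real polynomial $P$ having at least one irrational coefficient in positive degree. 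I argue by induction on $d=\deg P$. For the base case $d=1$, write $P(n)=a_1 n+a_0$ with $a_1$ irrational; then $\sum_{n=1}^N e^{2\pi i P(n)}$ is a geometric sum of common ratio $e^{2\pi i a_1}\ne 1$, hence bounded by $2/|1-e^{2\pi i a_1}|$ uniformly in $N$, and dividing by $N$ gives the claim.

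For the inductive step, fix $d\ge 2$ and let $k$ be the largest index with $a_k$ irrational, so $1\le k\le d$. First suppose $k\ge 2$. For a fixed nonzero integer $h$, set $Q_h(n)=P(n+h)-P(n)$, a polynomial of degree $d-1$. A direct binomial computation shows that the coefficient of $n^{k-1}$ in $Q_h$ equals $k\,h\,a_k$ plus a rational number (the contributions of the rational coefficients $a_j$ with $j>k$), hence is irrational; since $k-1\ge 1$, $Q_h$ carries an irrational coefficient in positive degree. The induction hypothesis therefore gives $N^{-1}\sum_{n=1}^{N-h} e^{2\pi i Q_h(n)}\to 0$ for each fixed $h\ne 0$. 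I then invoke the van der Corput inequality: for $1\le H\le N$ and $u_n=e^{2\pi i P(n)}$, using $|u_n|=1$ and $u_{n+h}\overline{u_n}=e^{2\pi i Q_h(n)}$,
$$\Big|\sum_{n=1}^N u_n\Big|^2\le \frac{N+H}{H}\,N+\frac{2(N+H)}{H}\sum_{h=1}^{H-1}\Big(1-\frac{h}{H}\Big)\Big|\sum_{n=1}^{N-h} e^{2\pi i Q_h(n)}\Big|.$$
Dividing by $N^2$ and letting $N\to\infty$ with $H$ fixed, the first term tends to $1/H$ while each summand in the second term tends to $0$; hence $\limsup_N |S_N(P)|^2\le 1/H$. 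Letting $H\to\infty$ then yields $S_N(P)\to 0$.

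It remains to treat $k=1$, i.e.\ $a_1$ irrational and $a_2,\dots,a_d$ all rational, where differencing only produces an irrational constant term and so is useless. Here I write $P(n)=a_1 n+R(n)$ with $R$ of rational coefficients, and let $q$ be a common denominator of these coefficients; a congruence computation gives $R(n+q)-R(n)\in\mathbb{Z}$, so $e^{2\pi i R(n)}$ is periodic of period $q$ and depends only on $n\bmod q$. Splitting $\sum_{n=1}^N e^{2\pi i P(n)}$ into the $q$ residue classes modulo $q$, on each class $e^{2\pi i R(n)}$ is constant while the remaining factor is a geometric progression of common ratio $e^{2\pi i a_1 q}\ne 1$ (again by irrationality of $a_1$), hence bounded independently of $N$; dividing by $N$ gives $S_N(P)\to 0$. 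The main obstacle is the inductive step: correctly setting up van der Corput's inequality and, above all, executing the iterated limit (first $N\to\infty$, then $H\to\infty$) that converts the bound $1/H$ into genuine convergence to $0$. Verifying that the differenced polynomial $Q_h$ retains an irrational coefficient in positive degree — which forces the separate treatment of the case $k=1$ — is the other point requiring care.
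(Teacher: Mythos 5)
The paper does not actually prove this lemma: it is quoted as a well-known result with a reference to \cite[Exercise 11.1.21]{Mur}, so there is no internal proof to compare against. Your argument is the classical Weyl--van der Corput proof and it is correct: the reduction via Weyl's criterion to $N^{-1}\sum_{n\le N}e^{2\pi i P(n)}\to 0$ is legitimate (multiplying by a nonzero integer $l$ preserves the irrationality of the relevant coefficient), the binomial computation showing that $Q_h(n)=P(n+h)-P(n)$ keeps an irrational coefficient in degree $k-1\ge 1$ when $k\ge 2$ is right, the iterated limit $N\to\infty$ then $H\to\infty$ in van der Corput's inequality is handled correctly (your constant $N+H$ in place of the sharper $N+H-1$ only weakens the bound harmlessly), and you correctly isolate the case $k=1$, where differencing degenerates, and dispose of it by periodicity of $e^{2\pi i R(n)}$ modulo a common denominator $q$ together with the geometric sum of ratio $e^{2\pi i a_1 q}\ne 1$. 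Nothing is missing.
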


As a corollary of Proposition \ref{uniformly distributed} and Lemma \ref{the polynomial is uniformly distributed}, we have the following result.

\begin{corollary}\label{the square of n times an irrational number}
Suppose that $x_{n}=(\{n^2\theta\}, \{(n+1)^2\theta\})$ for $n\geq 1$,
where $\theta$ is irrational.
Then the sequence $\{x_{n}\}_{n=1}^{\infty}$ is uniformly distributed modulo 1.
\end{corollary}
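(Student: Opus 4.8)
The plan is to verify Weyl's Criterion (Proposition \ref{uniformly distributed}) for the planar sequence $\{x_n\}$, reducing the whole computation to the one-dimensional equidistribution statement in Lemma \ref{the polynomial is uniformly distributed}. Fixing an arbitrary $(l_1,l_2)\in\bZ^2\setminus\{0\}$, I would first collapse the phase of the relevant exponential sum into a single real polynomial in $n$. Expanding $(n+1)^2=n^2+2n+1$ gives
\[
l_1 n^2\theta + l_2(n+1)^2\theta = \theta(l_1+l_2)n^2 + 2\theta l_2 n + \theta l_2 =: P(n),
\]
a real polynomial of degree at most $2$, so that the Weyl sum under consideration is exactly $\frac1N\sum_{n=1}^N e^{2\pi i P(n)}$.

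Next I would pass from the two-dimensional Weyl condition to the one-dimensional equidistribution of $P(n)$. If the fractional parts of $P(n)$ are uniformly distributed modulo $1$, then the one-variable instance of Proposition \ref{uniformly distributed} (applied with the single nonzero integer $l=1$) yields $\frac1N\sum_{n=1}^N e^{2\pi i P(n)}\to 0$, which is precisely the decay required by the planar Weyl Criterion. Hence it suffices to check the hypothesis of Lemma \ref{the polynomial is uniformly distributed}: that $P$ has at least one irrational coefficient of positive degree.

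The crucial step is a short case analysis exploiting the irrationality of $\theta$. The positive-degree coefficients of $P$ are $\theta(l_1+l_2)$ in degree $2$ and $2\theta l_2$ in degree $1$. If $l_2\neq 0$, then $2\theta l_2$ is a nonzero integer multiple of an irrational number, hence irrational. If $l_2=0$, then $l_1\neq 0$ since $(l_1,l_2)\neq(0,0)$, and the degree-$2$ coefficient $\theta(l_1+l_2)=\theta l_1$ is irrational. In either case some positive-degree coefficient of $P$ is irrational, so Lemma \ref{the polynomial is uniformly distributed} applies and the exponential sum vanishes in the limit. As $(l_1,l_2)$ was arbitrary, Proposition \ref{uniformly distributed} then gives the uniform distribution of $\{x_n\}$ modulo $1$.

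I do not anticipate a genuine obstacle here; the computation is entirely routine. The only point deserving a moment's care is the degenerate case $l_1+l_2=0$ with $l_2\neq 0$, where the leading (degree-$2$) coefficient collapses to $0$ and one must rely instead on the irrational linear coefficient $2\theta l_2$ to invoke Lemma \ref{the polynomial is uniformly distributed}.
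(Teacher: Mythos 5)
Your proof is correct and follows exactly the route the paper intends: the corollary is stated as a consequence of Weyl's Criterion and Lemma \ref{the polynomial is uniformly distributed}, and your reduction of the two-dimensional Weyl sum to the polynomial $P(n)=\theta(l_1+l_2)n^2+2\theta l_2 n+\theta l_2$, together with the case analysis showing some positive-degree coefficient is irrational (including the degenerate case $l_1+l_2=0$), is precisely the omitted computation.
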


\section{Anqie entropy}\label{anqie entropy}
From the definition of anqie, we see that each anqie corresponds to a topological dynamical system associated with the additive structure of $\mathbb{N}$. To understand anqies, it is natural to study the associated dynamics. The ``entropy" for anqies defined in this section is based formally on the topological entropy of the additive map $A$ (see the paragraph below Definition \ref{definition of anqie}). Here, let us first recall the definition of topological entropy for topological dynamical systems.
\begin{definition}\label{topological entropy}
Let $X$ be a compact Hausdorff space and $T$ a
continuous map on $X$. Suppose $\mathcal{U}$ and $\mathcal{V}$ are
two open covers for $X$. Denote by $\mathcal{U}\vee\mathcal{V}$ the
open cover whose elements consist of all intersections of elements from
$\mathcal{U}$ and $\mathcal{V}$ $($i.e.,
$\mathcal{U}\vee\mathcal{V}=\{A\cap B:
A\in\mathcal{U},B\in\mathcal{V}\}$$)$, and by $\mathcal{N}({\mathcal{U}})$
the minimal number of open sets in $\mathcal{U}$ that cover
$X$. Define
\begin{eqnarray*}
  h(T,\mathcal{U}) & =& \lim_n\frac{1}{n} \{\log\left(\mathcal{N}(\mathcal{U}\vee
  T^{-1}\mathcal{U}\vee\cdots\vee T^{-n+1}\mathcal{U})\right)\}, \\
  h(T)     & = &\sup_\mathcal{U}\left\{h(T,\mathcal{U}):\mathcal{U}\
  {\mathrm{is\ an \ open\ cover\ of }}\
  X\right\}.
\end{eqnarray*}
We call $h(T)$ the topological entropy of $T$.
\end{definition}

We refer to \cite{Adl-Kon-And65} for basics on topological entropy. Topological entropy is an invariant of topological dynamical systems. For two topological dynamical systems $(X_{1},T_{1})$ and $(X_{2},T_{2})$, if there is a continuous surjective map $\pi:X_{1}\rightarrow X_{2}$ such that $T_{2}\pi=\pi T_{1}$, we say that $(X_{2},T_{2})$ is a \emph{$($topological$)$ factor} of $(X_{1},T_{1})$ and $\pi$ a \emph{factor map}. In this case, we have $h(T_{2})\leq h(T_{1})$. Additionally, we say that $(X_{1},T_{1})$ and $(X_{2},T_{2})$ are \emph{topologically conjugate} (to each other) when $\pi$ is an homeomorphism. In this case, we have $h(T_{1})=h(T_{2})$.

When $(X,T)$ is a point transitive dynamical system and a transitive point $x_0$ is given, we shall denote this dynamical system as $(X,T,x_0)$. We say that $(X_2,T_2,x_2)$ is a factor of $(X_1,T_1,x_1)$ if there is a factor map $\pi:X_{1}\rightarrow X_{2}$ such that $\pi(x_1)=x_2$. When $\pi$ is a homeomorphism, we say that $(X_1,T_1,x_1)$ and $(X_2,T_2,x_2)$ are equivalent (to each other).

Now we are ready to introduce the notion of entropy for anqies.

\begin{definition}
Suppose that
$\A\subseteq l^\infty(\bN)$ is an anqie. We define the
\textbf{anqie entropy} of $\A$, denoted by $\AE(\mathcal{A})$, to be the topological entropy $h(A)$ of the
additive map $A$ which extends the map $n\mapsto n+1$ on $\bN$ to the whole maximal
ideal space of $\A$. If $\A$ is generated by a family of bounded arithmetic functions $\mathcal{F}$
as an anqie, we call $\AE(\mathcal{A})$ the \textbf{anqie entropy of $\mathcal{F}$}, denoted by $\AE(\mathcal{F})$. In particular, when $\mathcal{F}=\{f_{0},f_{1},\ldots\}$ is at most countable, we also use the notation $\AE(f_{0},f_{1},\ldots)$ to denote the anqie entropy of $\mathcal{F}$.
\end{definition}

We list some simple but very useful facts of $\AE$ in the following
lemma.

\begin{lemma}\label{basic property} Suppose that $\A_{1}, \A_{2}\subseteq l^\infty(\bN)$ are anqies.

(\romannumeral1) If $\A_{1}$ is a subanqie of $\A_{2}$, i.e., $\A_{1} \subseteq \A_{2}$, then $\AE(\A_{1})\le \AE(\A_{2})$;

(\romannumeral2) For any $f_1,\ldots,f_n\in l^\infty(\bN)$ $(n\geq 1)$ and any
polynomials $\phi_1,\ldots,\phi_{m}\in \mathbb{C}[x_1,\ldots,x_n]$ $(m\geq 1)$, we have
$
\AE(\phi_1(f_1,\ldots,f_n),\ldots,\phi_m(f_1,\ldots,f_n))\le
\AE(f_1,\ldots,$ $f_n)$.
\end{lemma}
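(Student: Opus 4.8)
The plan is to derive both statements from a single principle: an inclusion of anqies induces a factor map between the associated topological dynamical systems, and topological entropy is monotone under passage to factors, as recalled just after Definition~\ref{topological entropy}. Statement~(\romannumeral2) will then be an immediate consequence of~(\romannumeral1), once one observes that the anqie generated by the polynomial combinations $\phi_j(f_1,\ldots,f_n)$ sits inside the anqie generated by $f_1,\ldots,f_n$.

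For~(\romannumeral1), write $X_1$ and $X_2$ for the maximal ideal spaces of $\A_1$ and $\A_2$, and let $A_1$, $A_2$ be the corresponding additive maps. First I would define the restriction map $\pi\colon X_2\to X_1$ by $\pi(\rho)=\rho|_{\A_1}$. This is well defined because the restriction of a multiplicative state to the unital subalgebra $\A_1$ is again a multiplicative state (nonzero, since $1\mapsto 1$), and it is weak* continuous because convergence $\rho_\alpha(f)\to\rho(f)$ for all $f\in\A_2$ in particular holds for all $f\in\A_1$. For surjectivity I would invoke Proposition~\ref{the set of natural numbers are dense in X}: since $\pi(\iota_2(n))=\iota_1(n)$ for every $n$ (where $\iota_1,\iota_2$ denote the point-evaluation embeddings of $\bN$ into $X_1,X_2$), the image $\pi(X_2)$ is a compact, hence closed, subset of $X_1$ containing $\iota_1(\bN)$, so it equals $\overline{\iota_1(\bN)}=X_1$. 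Finally, the intertwining relation $\pi\circ A_2=A_1\circ\pi$ follows from the formula $A\rho(f)=\rho(\sigma_A f)$ established in Proposition~\ref{anqie is a shif invariant algebra} together with the $\sigma_A$-invariance of $\A_1$: for $\rho\in X_2$ and $f\in\A_1$ one has $\pi(A_2\rho)(f)=(A_2\rho)(f)=\rho(\sigma_A f)=\pi(\rho)(\sigma_A f)=A_1(\pi(\rho))(f)$, where $\sigma_A f\in\A_1$ is used in the last two equalities. Thus $(X_1,A_1)$ is a factor of $(X_2,A_2)$, and monotonicity of topological entropy under factors yields $\AE(\A_1)=h(A_1)\le h(A_2)=\AE(\A_2)$.

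For~(\romannumeral2), let $\A$ be the anqie generated by $f_1,\ldots,f_n$ and $\A'$ the anqie generated by $\phi_1(f_1,\ldots,f_n),\ldots,\phi_m(f_1,\ldots,f_n)$. Each $\phi_j(f_1,\ldots,f_n)$ is a polynomial combination of $f_1,\ldots,f_n$, hence lies in the C*-algebra $\A$; since $\A$ is already a $\sigma_A$-invariant C*-subalgebra of $l^\infty(\bN)$ containing all these elements, and $\A'$ is by definition the smallest such, we get $\A'\subseteq\A$. Applying part~(\romannumeral1) gives $\AE(\A')\le\AE(\A)$, which is exactly the claimed inequality.

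I expect the only genuine content to lie in~(\romannumeral1), specifically in checking that the restriction map is surjective and genuinely intertwines the two additive maps; the well-definedness and continuity are routine, and once the factor structure is in place the entropy inequality is immediate from the general theory recalled in the excerpt. Part~(\romannumeral2) then carries no additional difficulty beyond recognizing the containment of anqies.
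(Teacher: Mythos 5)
Your proposal is correct and follows essentially the same route as the paper: both construct the restriction map $X_2\to X_1$, verify continuity and the intertwining relation, and conclude via monotonicity of topological entropy under factor maps, with (\romannumeral2) reduced to (\romannumeral1) by the containment of anqies. The only difference is cosmetic: the paper justifies surjectivity by the extension of maximal ideals (multiplicative states) from $\A_1$ to $\A_2$, whereas you deduce it from the density of $\iota(\bN)$ in $X_1$ (Proposition \ref{the set of natural numbers are dense in X}) together with compactness of $X_2$; both are valid.
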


\begin{proof}
(\romannumeral1) Suppose that $(X_{1},A_{1})$ and $(X_{2},A_{2})$ are topological dynamical systems corresponding to $\mathcal{A}_{1}$ and $\mathcal{A}_{2}$, respectively. Let $\phi: X_2\rightarrow X_1$ be the map given by
\begin{equation} \label{eq_induced_between_A_1_and_A_2}
(\phi(\omega)) (f) = \omega(f),\quad \omega \in X_2,\, f\in \cA_1.
\end{equation}
Since every maximal ideal in $\mathcal{A}_{1}$ extends to a maximal ideal in $\mathcal{A}_{2}$, we have that $\phi$ is surjective. The continuity of $\phi$ follows from the definition of weak* topology on $X_{1}$ and $X_{2}$. Moreover, it is not hard to check that $\phi\circ A_{2}=A_{1}\circ\phi$. Then $(X_{1},A_{1})$ is a topological factor of $(X_{2},A_{2})$. Hence $h(A_{1})\leq h(A_{2})$ and $\AE(\mathcal{A}_{1})\leq \AE(\mathcal{A}_{2})$.

(\romannumeral2) follows from (\romannumeral1).
\end{proof}

More generally, the above polynomials $\phi_j$'s can even be replaced by continuous functions defined on the maximal ideal space of the anqie generated by $f_1,\ldots,f_n$. For example, we can obtain $\AE(\text{Re}(f)),\AE(\text{Im}(f)),\AE(\sqrt{|f|})\leq \AE(f)$ for any $f\in l^\infty(\bN)$.  Clearly, the anqie entropy $\AE(\mathcal{F})$ takes value in $[0,+\infty]$ for any family $\mathcal{F}\subseteq l^{\infty}(\mathbb{N})$. One may ask whether there is an arithmetic function with infinite anqie entropy. In the following, we construct an $f$ so that $\AE(f)=\infty$. Then from Lemma \ref{basic property}(i), one can easily see that $\AE(l^{\infty}(\mathbb{N}))=\infty$.

\begin{example}\label{an example of infinity entropy}
{\rm
 Let $\cup_{n\geq 0}$ $[1-2^{-n},1-2^{-n-1})$ be a partition of $[0,1)$. For each $n$, there is a partition of $[1-2^{-n},1-2^{-n-1})$ into $2^{n^2}$ subintervals of equal length, denote them by $[x_{n,i},x_{n,i+1})$ accordingly, for $i=0,1,\ldots,2^{n^2}-1$. On the interval $[x_{n,i},x_{n,i+1}]$, define $F((x_{n,i}+x_{n, i+1})/2)=2^{-n/2}$  and $F(x_{n,i})=F(x_{n,i+1})=0$, with remaining part connected linearly $($assume $F(1)=0$$)$. Then $F$ is a zigzag function of height $2^{-n/2}$ in the interval $[x_{n,i}$, $x_{n,i+1}]$. Moreover, $F$ is continuous on $[0,1]$. One can verify that, for any open subset $U$ of $[0,1]$, there is an integer $N$ such that $F^{N}(U)=[0,1]$ $($Here denote by $F^{N}$ the composition of $F$ for $N$ times$)$. Hence the topological dynamical system $([0,1],F)$ is transitive and there is an $x_{0}\in [0,1]$ such that the set $\{F^{n}x_{0} : n\in \mathbb{N}\}$ is dense in $[0,1]$ $($see \cite[Chapter \uppercase\expandafter{\romannumeral5}, Proposition 39]{Blo-Cop}$)$.

 Note that for $m\geq 0$, when $x\in [0,2^{-2m}]$, $F^{m}(x)=4^{m}x$. Choose the interval $J_{n,i}=[x_{n,i}/2^{2m},(x_{n,i}+x_{n,i+1})/2^{2m+1}]$ for $n=0,1,\ldots,4m$ and $i=0,1,\ldots,2^{n^2}-1$. So there are $1+2+2^4+\cdot\cdot\cdot+2^{(4m)^2}$ $(=a_m)$ disjoint closed intervals $J_{n,i}$ in $[0,2^{-2m})$ such that for each $n'$, $i'$ with $0\leq n'\leq 4m$ and $0\leq i'\leq 2^{n^2}-1$,
$
\cup_{n=0}^{4m}\cup_{i=0}^{2^{n^2}-1}J_{n,i}\subseteq [0,2^{-2m}) \subseteq F^{m+1}(J_{n',i'})=F([x_{n',i'},(x_{n',i'}+x_{n',i'+1})/2])=[0,2^{-n'/2}].
$
By \cite[Chapter\uppercase\expandafter{\romannumeral8}, Proposition 8]{Blo-Cop}, $h(F^{m+1})\geq \log a_{m}>\log 2^{(4m)^2}$. Hence, we obtain
 \[h(F)=\frac{h(F^{m+1})}{m+1}> \frac{\log 2^{(4m)^2}}{m+1}
\rightarrow \infty \hspace{.2cm} (\textmd{when}~m\rightarrow \infty).\]
Therefore $h(F)=\infty$. Now we let $f(n)=F^{n}(x_{0})$ and $X_{f}$ the maximal ideal space of $\mathcal{A}_{f}$. Then $(X_{f},A)$ and $([0,1],F)$ are topologically conjugate. Thus $\AE(f)=h(F)=\infty$.
}
\end{example}

In fact, the set $\mathcal{E}_{\infty}(\mathbb{N})$ consisting of bounded arithmetic functions $f$ with $\AE(f)=\infty$ is dense in $l^{\infty}(\mathbb{N})$ (see Theorem \ref{infinity entropy is dense in the set of all functions}), which will be proved in the next section. Now let us explore a way to compute the anqie entropy of a family of arithmetic functions $\mathcal{F}$. From the definition of anqie entropy, we see that it is important to know the properties of the anqie associated with $\mathcal{F}$.

Let $\mathcal{F}$ be a family of bounded arithmetic functions. Suppose that $\mathcal{A}_{\mathcal{F}}$ is the anqie generated by $\mathcal{F}$, i.e., $\cA_{\cF}$ is the C*-algebra generated by $\{1,(\sigma_{A})^j f: \, f\in \cF, j\in \bN\}$. Let $X_{\mathcal{F}}$ be the maximal ideal space of $\mathcal{A}_{\mathcal{F}}$. Then $X_\cF$ is metrizable if $\cF$ is countable (see Proposition \ref{metrizable}). The following dynamical Gelfand-Naimark theorem gives a description of $X_{\mathcal{F}}$. This provides us with a method to compute the anqie entropy of $\mathcal{F}$.

\begin{theorem} \label{thm_dynamicalGN}
Let $\cF=\{f_\lambda\}_{\lambda\in \Lambda}$ be a family of bounded arithmetic functions, where $\Lambda$ is an index set. Let $\cA_0$ be the C*-algebra generated by $\cF$, and $\cA_\cF$ the smallest $\sigma_{A}$-invariant C*-algebra that contains $\cA_0$. Suppose that $X_{\mathcal{F}}$ is the maximal ideal space of $\mathcal{A}_{\mathcal{F}}$. The following statements hold.

(\romannumeral1) For each $n\in \bN$, write $z_n=\left(f_\lambda(n)\right)_{\lambda\in\Lambda}$, an element in $\prod\nolimits_{\lambda\in \Lambda}\overline{f_\lambda(\bN)}$. Let $X_0$ be the closure of $\{z_n: n\in \bN\}$ in $\prod\nolimits_{\lambda\in \Lambda}\overline{f_\lambda(\bN)}$. Then $\cA_0\cong C(X_0)$.

(\romannumeral2) Write $\kappa_n=(z_n,z_{n+1},\ldots)$, an element in $X_0^{\bN}$. Let $Y_\cF$ be the closure of $\{\kappa_n:\, n\in \bN\}$ in $X_0^\bN$. Then $\cA_{\mathcal{F}} \cong C(Y_\cF)$.

(\romannumeral3) Assume that $B$ is the Bernoulli shift on $X_{0}^{\mathbb{N}}$ defined by $B: (\omega_0,\omega_1,\ldots) \mapsto (\omega_1,\omega_2,\ldots)$. Then the map $B$ restricted to $Y_{\mathcal{F}}$, denoted by $B_{\mathcal{F}}$, is identified with $A$ on $X_{\mathcal{F}}$.
\end{theorem}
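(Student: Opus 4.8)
The plan is to derive all three parts from a single coordinate-evaluation construction, reducing (ii) to (i) applied to an enlarged generating family and reducing (iii) to a continuity argument on a dense set. For part (i), let $X$ denote the genuine maximal ideal space of $\cA_0$, i.e.\ the space of multiplicative states, and define $\Psi\colon X\to\prod_{\lambda\in\Lambda}\overline{f_\lambda(\bN)}$ by $\Psi(\rho)=(\rho(f_\lambda))_{\lambda\in\Lambda}$. First I would check this is well defined: each $\rho(f_\lambda)$ lies in the spectrum of $f_\lambda$ in $\cA_0$, which (as already noted in the introduction for a single generator) equals $\overline{f_\lambda(\bN)}$, so $\Psi$ indeed lands in the product. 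Each coordinate map $\rho\mapsto\rho(f_\lambda)$ is weak*-continuous by definition of the weak* topology, hence $\Psi$ is continuous, and via the Gelfand transform (equation (\ref{Gelfand transform})) one has $\Psi(\iota(n))=z_n$. Since $\overline{\iota(\bN)}=X$ by Proposition \ref{the set of natural numbers are dense in X}, continuity and compactness give $\Psi(X)=\overline{\{z_n\}}=X_0$. For injectivity, two multiplicative states agreeing on every generator $f_\lambda$ agree on the $*$-algebra they generate and hence, by continuity, on all of $\cA_0$; thus $\Psi$ is a continuous bijection from a compact space onto a Hausdorff space, so a homeomorphism, and $\cA_0\cong C(X)\cong C(X_0)$.

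For part (ii), I would apply part (i) to the enlarged family $\{\sigma_A^{j}f_\lambda:\lambda\in\Lambda,\ j\in\bN\}$, whose generated C*-algebra is by definition the anqie $\cA_\cF$. Part (i) then identifies $X_\cF$ with the closure of the evaluation vectors $((\sigma_A^{j}f_\lambda)(n))_{\lambda,j}=(f_\lambda(n+j))_{\lambda,j}$ inside $\prod_{\lambda,j}\overline{f_\lambda(\bN)}$. The key manipulation is a reindexing: grouping the coordinates by the value of $j$ identifies $\prod_{\lambda,j}\overline{f_\lambda(\bN)}$ with $\big(\prod_\lambda\overline{f_\lambda(\bN)}\big)^{\bN}$, under which the vector above becomes $(z_n,z_{n+1},\ldots)=\kappa_n$. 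Because $X_0$ is closed in $\prod_\lambda\overline{f_\lambda(\bN)}$, the set $X_0^{\bN}$ is closed in the big product and contains every $\kappa_n$; hence the closure of $\{\kappa_n\}$ taken inside $X_0^{\bN}$ coincides with its closure in the big product, namely $X_\cF$. This gives $Y_\cF=X_\cF$ and $\cA_\cF\cong C(Y_\cF)$.

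For part (iii), observe first that on the dense set $\{\kappa_n\}$ the Bernoulli shift acts by $B(\kappa_n)=(z_{n+1},z_{n+2},\ldots)=\kappa_{n+1}$; since $Y_\cF$ is compact and $B$ continuous, $B(Y_\cF)=B(\overline{\{\kappa_n\}})\subseteq\overline{\{\kappa_{n+1}\}}\subseteq Y_\cF$, so the restriction $B_\cF$ is well defined. Under the homeomorphism $\Psi$ of part (ii), which sends $\iota(n)$ to $\kappa_n$, both $B_\cF\circ\Psi$ and $\Psi\circ A$ send $\iota(n)$ to $\kappa_{n+1}$ (recall $A\iota(n)=\iota(n+1)$ from Proposition \ref{anqie is a shif invariant algebra}); being continuous maps that agree on the dense subset $\iota(\bN)$ of the compact Hausdorff space $X_\cF$, they coincide, which is exactly the asserted identification of $B_\cF$ with $A$.

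The step I expect to be the main obstacle is the well-definedness and injectivity of $\Psi$ in part (i): concretely, the spectral identification $\rho(f_\lambda)\in\overline{f_\lambda(\bN)}$ and the claim that a multiplicative state is determined by its values on the generating family. Both are standard consequences of Gelfand theory and spectral permanence, but they are the only genuinely algebraic inputs. Once (i) is in place, part (ii) is a point-set reindexing whose one subtlety is checking that the closures in the nested product spaces agree, and part (iii) is a purely formal \emph{agreement on a dense set} argument.
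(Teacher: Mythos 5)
Your proof is correct, but it runs on the opposite side of Gelfand duality from the paper's. The paper works on the function-algebra side: it defines evaluation homomorphisms $\varpi:C(X_0)\to l^\infty(\bN)$, $\varpi(g)(n)=g(z_n)$, and $\pi:C(Y_\cF)\to l^\infty(\bN)$, $\pi(h)(n)=h(\kappa_n)$, shows each is isometric because the evaluation points are dense, and then invokes the Stone--Weierstrass theorem twice (once with the coordinate projections $\phi_\lambda$, once with the coordinate functions $\varphi_j(\rho)=f(\rho_j)$) to identify the images with $\cA_0$ and $\cA_\cF$. You instead work on the spectrum side, sending a multiplicative state $\rho$ to its tuple of values on the generators and checking this is a continuous bijection between compact Hausdorff spaces; your injectivity step (a multiplicative state is determined by its values on a generating family) is the dual of the paper's Stone--Weierstrass step, and it does require the standard facts that multiplicative states are automatically contractive and $*$-preserving so that agreement on generators propagates to the norm closure. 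The more substantive structural difference is in (ii): the paper reruns the Stone--Weierstrass argument on $Y_\cF$ from scratch, whereas you reduce (ii) to (i) by enlarging the generating family to $\{\sigma_A^j f_\lambda\}$ and reindexing $\prod_{\lambda,j}\overline{f_\lambda(\bN)}$ as $\bigl(\prod_\lambda\overline{f_\lambda(\bN)}\bigr)^{\bN}$; you correctly handle the one subtlety there (that closures taken in the nested closed subproducts agree), though you should note the same remark is needed for the factors $\overline{(\sigma_A^j f_\lambda)(\bN)}\subseteq\overline{f_\lambda(\bN)}$ themselves. What each approach buys: the paper's version explicitly constructs the functions $\varphi_j$, which are reused later in the text, and exhibits $\cA_\cF$ concretely as $\pi(C(Y_\cF))$; yours is shorter, avoids repeating the separation argument, and makes the homeomorphism $X_\cF\cong Y_\cF$ explicit, which is precisely what part (iii) needs and which the paper treats rather tersely (``it is not hard to check that $F$ is a homeomorphism''). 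Your part (iii) is the same dense-set argument as the paper's.
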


\begin{proof}
(\romannumeral1) Define the map $\varpi:C(X_0)\rightarrow l^\infty(\bN)$ by $\varpi(g)(n)=g(z_n)$ for $g\in C(X_0)$. Let $\widetilde{\phi_\lambda}: \prod\nolimits_{\lambda\in \Lambda}\overline{f_\lambda(\bN)}\to \overline{f_\lambda(\bN)}$ be the
projection map from $\prod\nolimits_{\lambda\in \Lambda}\overline{f_\lambda(\bN)}$ onto its $\lambda$-th coordinate. Denote by $\phi_{\lambda}$ the restriction of $\widetilde{\phi_\lambda}$ to $X_{0}$. Then $\phi_\lambda\in C(X_{0})$ and $\varpi(\phi_{\lambda})=f_{\lambda}$ for any $\lambda\in \Lambda$. Since $\{z_n: n\in \bN\}$ is dense in $X_{0}$, we conclude that $\varpi$ is isometric and $\mathcal{A}_{0}\subseteq \varpi(C(X_{0}))$.

Let $\omega_{1}=(\omega_{1,\lambda})_{\lambda\in \Lambda}$ and $\omega_{2}=(\omega_{2,\lambda})_{\lambda\in \Lambda}$ be distinct points in $X_{0}$. Then there is a $\lambda_{0}\in \Lambda$ such that $\omega_{1,\lambda_{0}}\neq \omega_{2,\lambda_{0}}$. Hence $\phi_{\lambda_{0}}=\varpi^{-1}(f_{\lambda_{0}})$ satisfies $\phi_{\lambda_{0}}(\omega_{1})\neq\phi_{\lambda_{0}}(\omega_{2})$.  This implies that $\varpi^{-1}(\cA_{0})$ separates the points of $X_{0}$. Note that $\varpi^{-1}(\mathcal{A}_{0})$ is a closed *-subalgebra of $C(X_{0})$. By the Stone-Weierstrass theorem (see, e.g., \cite[Theorem 3.4.14]{KR}), $\varpi^{-1}(\mathcal{A}_{0})=C(X_{0})$. Thus $\cA_{0}\cong C(X_{0})$.

(\romannumeral2) Define the map $\pi: C(Y_{\mathcal{F}})\rightarrow l^\infty(\bN)$ by $\pi(h)(n)=h(\kappa_n)$
for $h\in C(Y_{\mathcal{F}})$. Note that $\{\kappa_n:\, n\in \bN\}$ is dense in $Y_{\mathcal{F}}$, then $\pi$ is an injective homomorphism and $C(Y_{\mathcal{F}})$ is *-isomorphic to $\pi(C(Y_{\mathcal{F}}))$.

To prove $\cA_{\mathcal{F}}\subseteq \pi(C(Y_{\mathcal{F}}))$, we only need to show that, for any given $j\in \bN$ and $f\in \mathcal{A}_{0}$, $(\sigma_{A})^jf\in \pi(C(Y_{\mathcal{F}}))$. Define a function $\varphi_{j}$ on $Y_{\mathcal{F}}$, such that for any $\rho=(\rho_{0},\rho_{1},\ldots)\in Y_{\mathcal{F}}$,
\begin{equation}\label{define coordinate function}
\varphi_{j}(\rho)=f(\rho_{j}).
\end{equation}
Here we use the fact that $f$ corresponds to $f(x)\in C(X_{0})$ by (\romannumeral1). Since $\rho_{j}$ is the image of the projection of $\rho$ to the $(j+1)$-th coordinate, one concludes that $\varphi_{j}\in C(Y_{\mathcal{F}})$. From $\varphi_{j}(\kappa_{n})=f(z_{n+j})=f(n+j)$, we obtain $\pi(\varphi_{j})=(\sigma_{A})^{j}f$. Thus $\mathcal{A}_{\mathcal{F}}\subseteq \pi(C(Y_{\mathcal{F}}))$.

Given distinct points $\rho=(\rho_{0},\rho_{1},\ldots)$ and $\rho'=(\rho_{0}',\rho_{1}',\ldots)\in Y_{\mathcal{F}}$, there is an $l\in \mathbb{N}$ such that $\rho_{l}\neq \rho'_{l}$ in $X_{0}$. Then by (\romannumeral1), there is an $f_{1}\in \mathcal{A}_{0}$ so that $f_{1}(\rho_{l})\neq f_{1}(\rho'_{l})$. Let $\varphi_{l}$ be the function defined in equation (\ref{define coordinate function}) replacing $j$ by $l$ and $f$ by $f_{1}$. Then $\varphi_{l}(\rho)\neq \varphi_{l}(\rho')$. Combining with $\varphi_{l}=\pi^{-1}((\sigma_{A})^{l}f_{1})$, we conclude that $\pi^{-1}(\cA_{\mathcal{F}})$ separates points in $Y_{\mathcal{F}}$. Note that $\pi^{-1}(\mathcal{A}_{\mathcal{F}})$ is a closed *-subalgebra of $C(Y_{\mathcal{F}})$. By the Stone-Weierstrass theorem, $\pi^{-1}(\mathcal{A}_{\mathcal{F}})=C(Y_{\mathcal{F}})$. Thus $\cA_{\mathcal{F}}\cong C(Y_{\mathcal{F}})$.

(\romannumeral3) Recall that the map $A$ on $X_{\mathcal{F}}$ sends $\iota(n)$, the multiplicative state of point evaluation at $n$, to $\iota(n+1)$. From the definition of $\pi$ in the above proof of (\romannumeral2), it is not hard to check that the map $X_{\mathcal{F}}\rightarrow Y_{\mathcal{F}}$ extended by $\iota(n)\mapsto \kappa_n$, denoted as $F$, is a homeomorphism. Note that $B_{\mathcal{F}}\circ F=F\circ A$. Thus the additive map $A$ on $X_{\mathcal{F}}$ coincides with the restriction of $B$ to $Y_{\mathcal{F}}$.
\end{proof}

In the following, we treat $X_{\mathcal{F}}$ the same as $Y_{\mathcal{F}}$ and call $(X_\cF,B_{\mathcal{F}})$ the \textit{canonical representation} of the anqie $\cA_\cF$. By the definition of anqie entropy, we see that $\AE(\mathcal{F})=h(B_{\mathcal{F}})$. Next, we show an example to help us understand the above theorem.

\begin{example}\label{torus}
{\rm
Let $f(n)=e^{2\pi \i n^2\theta}$, for $n\geq 0$ and $\theta$ irrational. Denote by $\mathcal{A}_{f}$ the anqie generated by $f$ and $X_{f}$ the maximal ideal space of $\mathcal{A}_{f}$. Then $X_{f}$ is homeomorphic to $S^{1}\times S^{1}$ and $\AE(f)=0$, where $S^{1}$ is the unit circle.
}
\end{example}

The proof of the above fact is more involved. Here are some details.
Let $X_{0}=\overline{f(\mathbb{N})}$. Theorem \ref{thm_dynamicalGN} gives that $X_{f}$ is the closure of $\{\kappa_n=(e^{2\pi \i n^2\theta},e^{2\pi \i (n+1)^2\theta},\ldots):n\in \mathbb{N}\}$ in $X_{0}^{\mathbb{N}}$. It is easy to see that for any $x=(x_{0},x_{1},\ldots)\in X_{f}$, the coordinate $x_{l}$ ($l\geq 2$) can be determined by the first two coordinates $x_{0}$, $x_{1}$. In fact,
\begin{equation}\label{the thid coordinate}
x_{l}=e^{2\pi \i(l^2-l)\theta}x_{0}(x_{1}\overline{x_{0}})^l.
\end{equation}
Define $\Phi$ to be the projection from $X_{f}$ onto its first two coordinates, i.e., for any $x=(x_{0},x_{1},\ldots)\in X_{f}$, $\Phi(x)=(x_{0},x_{1})$. From the above analysis, we see that $\Phi$ is a homeomorphism from $X_{f}$ onto $\Phi(X_{f})$. By Corollary \ref{the square of n times an irrational number}, the sequence $\{(\{n^2\theta\},\{(n+1)^2\theta\})\}_{n=0}^{\infty}$ is uniformly distributed modulo 1. Then $\Phi(X_{f})=S^{1}\times S^{1}$ and thus $X_{f}$ is homeomorphic to $S^{1}\times S^{1}$.

In addition, the map $A$ (or the Bernoulli shift) on $X_{f}$ corresponds to a continuous map on $S^{1}\times S^{1}$, denoted by $A$ again, such that, for any $(x_{0},x_{1})\in S^{1}\times S^{1}$, $A(x_{0},x_{1})=(x_{1},x_{2})$, where $x_{2}=e^{4\pi \i \theta}x_{1}^2\overline{x_{0}}$ by equation (\ref{the thid coordinate}). If we identify $S^{1}\times S^{1}$ with $\mathbb{R}/\mathbb{Z}\times \mathbb{R}/\mathbb{Z}$, then we can rewrite the map $A$ as
 \begin{align*}
        A((\alpha_1, \alpha_2)) =
        \begin{pmatrix}
            0 & 1 \\
            -1 & 2\\
        \end{pmatrix}
        \begin{pmatrix}
           \alpha_1 \\
           \alpha_2 \\
        \end{pmatrix} +
        \begin{pmatrix}
           0 \\
           2\theta
        \end{pmatrix}.
    \end{align*}
The map $A$ is an affine linear transformation on the torus and the coefficient matrix only has eigenvalue $1$. By \cite[Theorems 8.11, 8.14]{Wal82}, $h(A)=0$ and then $\AE(f)=0$.


\section{Anqie-independence and semi-continuity of anqie entropy}\label{properties of anqie entropy}

Additivity for independent objects is one of the most crucial features of all kinds of entropies. To state this property for anqie entropy (see Theorem \ref{thm_entropy_additive}), we define anqie independence by tensor products as follows.

\begin{definition}
We call anqies $\cA_\xi$ $(\xi\in \Xi)$ \textbf{anqie independent} if the C*-algebra they generate, denoted by $\mathcal{A}$, is canonically isomorphic to  $\bigotimes_{\xi\in \Xi}\cA_\xi$ as a C*-algebra tensor product, or equivalently, the space $X$ is homeomorphic to the $\prod\nolimits_{\xi\in \Xi}X_\xi$, where $X,X_\xi$ are the maximal ideal spaces of $\cA,\cA_\xi$, respectively. Families of arithmetic functions $\cF_\xi$ $(\xi\in \Xi)$ are called \textbf{anqie independent} if $\cA_{\cF_\xi}$ $(\xi\in \Xi)$ are anqie independent. Here the symbolic $\Xi$ is an index set.
\end{definition}

\begin{remark}
{\rm
As far as we are concerned here, all C*-algebras involved are abelian. The tensor product of abelian C*-algebras has a unique C*-algebra tensor-product norm. Let $X_{1}$ and $X_{2}$ be the maximal
ideal spaces of $\A$ and $\B$, respectively. Then the C*-tensor-norm on $\mathcal{A}\otimes\mathcal{B}$ agrees with the norm on $C(X_{1}\times X_{2})$.
}\end{remark}

The following simple example shows how our anqie independence is related to certain arithmetic structures of
natural numbers.
\begin{example}\label{example of anqie independent}
{\rm Let $f_l=\{0,1,\ldots,l-1,0,1,\ldots,l-1,\ldots\}$, for any $l\geq 1$. Here $f_{l}$ is viewed as a periodic function of period $l$.
Then $f_{m}$ and $f_{n}$ are anqie independent if and only if $(m,n)=1$.
}\end{example}

In the following, we give detailed argument for the above claim. Let $\mathcal{A}$ be the anqie generated by $\{f_{m},f_{n}\}$ and $X$ the maximal ideal space of $\mathcal{A}$. Denote by $\mathcal{A}_{m}$ and $\mathcal{A}_{n}$ the anqies generated by $f_{m}$ and $f_{n}$, respectively. Then by Theorem \ref{thm_dynamicalGN}, the maximal ideal space of $\mathcal{A}_m$, denoted by $X_{1}$, is the set $f_{m}(\mathbb{N})$ consisting of $m$ elements. Similarly we use $X_{2}$ to denote that of $\mathcal{A}_{n}$, which is the set $f_{n}(\mathbb{N})$ consisting of $n$ elements. It is not hard to check that $X=\{(f_{m}(k),f_{n}(k)): k=0,1,\ldots\}$. Then $X\subseteq X_{1}\times X_{2}$. It is easy to see that $X=X_{1}\times X_{2}$ if and only if for any $i=0,\ldots,m-1$ and $j=0,\ldots,n-1$, the linear system of congruences $k\equiv i(mod~m)$, $k\equiv j(mod~n)$ has one solution. By Chinese remainder theorem, we conclude that $X=X_{1}\times X_{2}$ if and only if $(n,m)=1$. Hence the claim in the above example holds.

For $f_{\theta_i}(n)=e^{2\pi \i n\theta_i}$ with $\theta_i$ irrational $(0\leq i\leq t, t\in \mathbb{N})$, if $\theta_0,\ldots,\theta_t$ are $\bQ$-linearly independent, then $f_{\theta_{0}},\ldots,f_{\theta_{t}}$ are anqie independent. In the following, we give a brief explanation to the above fact. From Proposition \ref{uniformly distributed} and Lemma \ref{the polynomial is uniformly distributed}, we obtain that the sequence $\{(n\theta_0,\ldots, n\theta_t)\}_{n=0}^\infty$ is uniformly distributed modulo $1$. Then by Theorem \ref{thm_dynamicalGN}, the maximal ideal space of the anqie generated by $f_{\theta_{0}},\ldots, f_{\theta_{t}}$ is the Cartesian product of $t+1$ copies of $S^{1}$, while that of the anqie generated by $f_{\theta_{i}}$ is $S^{1}$, for $i=0,\ldots,t$.

Now we prove one of the main results in this section.
\begin{proof}[Proof of Theorem \ref{thm_entropy_additive}]
Set $\mathcal{F}_{1}=\{f_1,\ldots,f_n\}$, $\mathcal{F}_{2}=\{g_1,\ldots,g_m\}$ and $\mathcal{F}=\mathcal{F}_{1}\cup \mathcal{F}_{2}$.
Let $(X,A)$ (or, $\A$) be the anqie generated by $\mathcal{F}$. Denoted by $(X_{1},A_{1})$ (or, $\A_{1}$) and $(X_{2},A_{2})$ (or, $\A_{2}$) the anqies generated by $\mathcal{F}_{1}$ and $\mathcal{F}_{2}$, respectively. Then there is a continuous injective map, denoted by $i$, from $X$ to $X_{1}\times X_{2}$ given by $i(\rho)=(\rho|_{\mathcal{A}_{1}},\rho|_{\mathcal{A}_{2}})$ for any $\rho\in X$. In this case $(X,A)$ can be viewed as a subsystem of $\left(X_{1}\times X_{2}, A_{1}\times A_{2}\right)$, that is $X$ is a closed subset of $X_{1}\times X_{2}$ and the restriction of $A_{1}\times A_{2}$ to $X$ is identified with $A$. Note that $h(A_{1}\times A_{2})=h(A_{1})+h(A_{2})$ (see \cite[Theorem 3]{Adl-Kon-And65}). Then
\[
{\AE}(\mathcal{F})=h(A)\leq h(A_{1}\times A_{2})=h(A_{1})+h(A_{2})=\AE(\mathcal{F}_{1})+\AE(\mathcal{F}_{2}).
\]
If $\mathcal{F}_{1}$ and $\mathcal{F}_{2}$ are anqie independent, then $i$ is a homeomorphism and thus the equality holds.
\end{proof}

\begin{corollary} \label{cor_AE(f+g)_leq_AEf+AEg}
For any $f,g\in l^\infty(\bN)$, we have
\[
\left|{\AE}(f)-{\AE}(g)\right|\leq {\AE}(f\pm g)\leq {\AE}(f)+{\AE}(g),
\]
\[
{\AE}(f\cdot g)\leq {\AE}(f)+{\AE}(g).
\]
\end{corollary}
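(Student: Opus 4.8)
The plan is to derive all three inequalities directly from Lemma \ref{basic property}(ii) (monotonicity under polynomial maps) together with the subadditivity of Theorem \ref{thm_entropy_additive}. The two upper bounds require almost no work: the functions $f+g$, $f-g$, and $f\cdot g$ are each polynomials in the pair $f,g$, so Lemma \ref{basic property}(ii) gives $\AE(f\pm g)\le \AE(f,g)$ and $\AE(f\cdot g)\le \AE(f,g)$. Theorem \ref{thm_entropy_additive} then supplies $\AE(f,g)\le \AE(f)+\AE(g)$, and composing the two inequalities yields $\AE(f\pm g)\le \AE(f)+\AE(g)$ and $\AE(f\cdot g)\le \AE(f)+\AE(g)$ simultaneously.

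For the lower bound I would run the same idea in reverse, expressing each of $f$ and $g$ as a polynomial in $f\pm g$ and the remaining function. For the plus sign one has $f=(f+g)-g$ and $g=(f+g)-f$, so Lemma \ref{basic property}(ii) gives $\AE(f)\le \AE(f+g,g)$ and $\AE(g)\le \AE(f+g,f)$; applying Theorem \ref{thm_entropy_additive} to each right-hand side produces the symmetric pair
\[
\AE(f)\le \AE(f+g)+\AE(g),\qquad \AE(g)\le \AE(f+g)+\AE(f).
\]
The minus-sign case is identical, using $f=(f-g)+g$ and $g=f-(f-g)$, and gives the corresponding pair with $f+g$ replaced by $f-g$. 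Rearranging the two inequalities to isolate $\AE(f)-\AE(g)$ and $\AE(g)-\AE(f)$ then delivers $\left|\AE(f)-\AE(g)\right|\le \AE(f\pm g)$.

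The only genuine point of care, and the one I would address explicitly, is the bookkeeping with infinite entropy values, since $\AE$ takes values in $[0,+\infty]$ and the absolute difference could in principle read as $\infty-\infty$. The two displayed inequalities above hold verbatim in $[0,+\infty]$ regardless of finiteness. When both $\AE(f)$ and $\AE(g)$ are finite the subtraction is legitimate and the bound follows by rearrangement. If exactly one of them is infinite, say $\AE(g)=+\infty$ and $\AE(f)<+\infty$, then the inequality $\AE(g)\le \AE(f\pm g)+\AE(f)$ forces $\AE(f\pm g)=+\infty$, so the claimed estimate again holds with both sides infinite; and if both are infinite the left-hand side is interpreted as not exceeding $\AE(f\pm g)$ vacuously once the same argument shows $\AE(f\pm g)=+\infty$. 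Since no step beyond these two cited results is needed, I expect the proof to be short, with this infinite-value check being the main thing worth spelling out.
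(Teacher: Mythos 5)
Your proof is correct and follows essentially the same route as the paper: the upper bounds come from monotonicity under passing to a subanqie (Lemma \ref{basic property}) combined with Theorem \ref{thm_entropy_additive}, and the lower bound from writing $f=(f\pm g)\mp g$ and applying the same two results, exactly as the paper does. The only quibble is your both-infinite edge case: the argument does not force $\AE(f\pm g)=+\infty$ there (take $g=-f$ with $\AE(f)=+\infty$, so $f+g=0$), but the inequality is vacuous in that case since $\infty-\infty$ is undefined, and the paper does not address this either.
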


\begin{proof}
Since $\cA_{f \pm g}$ and $\cA_{f\cdot g}$ are subanqies of $\cA_{f,g}$, Lemma \ref{basic property}(\romannumeral1) and Theorem \ref{thm_entropy_additive} imply that
\[
{\AE}(f\pm g), {\AE}(f\cdot g)\leq {\AE}(f,g)\leq {\AE}(f)+{\AE}(g).
\]
The conclusions result from
${\AE}(f)={\AE}((f\pm g)\mp g)\leq {\AE}(f\pm g)+ {\AE}(g)$.
\end{proof}

As a simple application of the above corollary, we obtain the following result.

\begin{proposition}\label{a property of zero entropy} For any $f,g\in
l^\infty(\mathbb{N})$ with $\AE(g)=0$, we always have $\AE(f+g)=\AE(f)$.
\end{proposition}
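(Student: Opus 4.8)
The plan is to read the conclusion off directly from the two-sided estimate in Corollary \ref{cor_AE(f+g)_leq_AEf+AEg}, which already packages exactly what is needed. Substituting $\AE(g)=0$ into that corollary, the right-hand inequality $\AE(f+g)\le \AE(f)+\AE(g)$ collapses to $\AE(f+g)\le \AE(f)$, while the left-hand inequality $|\AE(f)-\AE(g)|\le \AE(f+g)$ collapses to $\AE(f)\le \AE(f+g)$. Combining the two bounds immediately yields $\AE(f+g)=\AE(f)$, which is the assertion.

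If one prefers to argue from scratch rather than cite the corollary, the same two inequalities can be reconstructed from the earlier results. The upper bound comes from monotonicity and subadditivity: $\AE(f+g)\le \AE(f,g)\le \AE(f)+\AE(g)=\AE(f)$, using Lemma \ref{basic property}(\romannumeral1) and Theorem \ref{thm_entropy_additive}. For the lower bound I would write $f=(f+g)+(-g)$ and apply subadditivity to obtain $\AE(f)\le \AE(f+g)+\AE(-g)$; since $-g$ is a polynomial in $g$ and $g$ a polynomial in $-g$, Lemma \ref{basic property}(\romannumeral2) forces $\AE(-g)=\AE(g)=0$, so that $\AE(f)\le \AE(f+g)$. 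The two bounds again pinch $\AE(f+g)$ to equal $\AE(f)$.

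The only thing demanding a moment's care is the bookkeeping in the extended reals when $\AE(f)=+\infty$: there the upper estimate reads $\AE(f+g)\le +\infty$ and the lower estimate reads $+\infty\le \AE(f+g)$, which together still force $\AE(f+g)=+\infty=\AE(f)$, so the statement is unaffected. Because Corollary \ref{cor_AE(f+g)_leq_AEf+AEg} already absorbs all of this, I do not expect any genuine obstacle; the proposition is essentially a one-line corollary, and its entire substance is carried by the additivity (Theorem \ref{thm_entropy_additive}) and monotonicity (Lemma \ref{basic property}) results proved earlier.
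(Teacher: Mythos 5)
Your proof is correct and follows essentially the same route as the paper: the paper likewise reads both bounds off Corollary \ref{cor_AE(f+g)_leq_AEf+AEg}, writing $\AE(f+g)+\AE(-g)\ge \AE(f)$ and $\AE(f+g)\le \AE(f)+\AE(g)$ and concluding immediately. Your extra remarks on reconstructing the corollary and on the $\AE(f)=+\infty$ case are fine but not needed.
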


\begin{proof}

From Corollary \ref{cor_AE(f+g)_leq_AEf+AEg}, we have $\AE(f+g)+\AE(-g)\ge \AE(f)$ and $\AE(f+g)\le
\AE(f)+\AE(g)$. It follows that $\AE(f+g)=\AE(f)$. \end{proof}


Write $\sigma_{A}(\cF)=\{\sigma_{A}f: f\in \cF\}$. Since $\mathcal{A}_{\sigma_{A}(\mathcal{F})}\subseteq \mathcal{A}_{\mathcal{F}}$, one has ${\AE}(\cA_{\sigma_{A}(\mathcal{F})})\leq {\AE}(\cA_\cF)$ by Lemma \ref{basic property}. From Theorem \ref{thm_dynamicalGN}, it is not hard to see that $X_\cF\setminus X_{\sigma_{A}(\mathcal{F})}$ contains at most one element. Thus the following corollary holds.

\begin{corollary} \label{cor_AE(AF)=AE(f)}
For any family $\cF\subseteq l^\infty(\bN)$, we have ${\AE}(\sigma_{A}(\cF))={\AE}(\cF)$.
\end{corollary}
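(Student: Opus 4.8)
The plan is to prove the two inequalities separately. The inequality $\AE(\sigma_{A}(\cF))\le \AE(\cF)$ is already recorded in the paragraph preceding the corollary: since $\cA_{\sigma_{A}(\cF)}\subseteq \cA_\cF$, Lemma \ref{basic property}(\romannumeral1) gives it at once. So the entire content is the reverse inequality $\AE(\cF)\le \AE(\sigma_{A}(\cF))$, and I would prove it by comparing the two canonical representations furnished by Theorem \ref{thm_dynamicalGN}.

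First I would spell out those representations. Writing $z_n=(f_\lambda(n))_\lambda$ and $\kappa_n=(z_n,z_{n+1},\ldots)$, Theorem \ref{thm_dynamicalGN} gives $X_\cF=\overline{\{\kappa_n:n\ge 0\}}$ with $B_\cF$ the Bernoulli shift. Because $(\sigma_{A}f_\lambda)(n)=f_\lambda(n+1)$, the data attached to $\sigma_{A}(\cF)$ are $z'_n=z_{n+1}$ and $\kappa'_n=\kappa_{n+1}$, whence $X_{\sigma_{A}(\cF)}=\overline{\{\kappa_n:n\ge 1\}}$, again with the shift. Thus $X_{\sigma_{A}(\cF)}$ is a closed invariant subset of $X_\cF$ with $B_{\sigma_{A}(\cF)}=B_\cF|_{X_{\sigma_{A}(\cF)}}$ (this is precisely why $X_\cF\setminus X_{\sigma_{A}(\cF)}\subseteq\{\kappa_0\}$, as already noted). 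The key extra feature is that the shift pushes all of $X_\cF$ into the smaller space: $B_\cF(\{\kappa_n:n\ge 0\})=\{\kappa_n:n\ge 1\}$, so by continuity $B_\cF(X_\cF)\subseteq \overline{\{\kappa_n:n\ge 1\}}=X_{\sigma_{A}(\cF)}$.

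Then I would reduce the corollary to a general fact about topological entropy: if $(X,T)$ is a topological dynamical system, $Y\subseteq X$ is closed and invariant, and $T(X)\subseteq Y$, then $h(T)=h(T|_Y)$. The bound $h(T|_Y)\le h(T)$ is the standard subsystem estimate. For the reverse, given an open cover $\mathcal{U}$ of $X$, I would use that $T(X)\subseteq Y$ forces $\mathcal{N}(T^{-1}\mathcal{W})\le \mathcal{N}(\mathcal{W}|_Y)$ for every open cover $\mathcal{W}$, where $\mathcal{W}|_Y=\{W\cap Y:W\in\mathcal{W}\}$ and $\mathcal{N}(\mathcal{W}|_Y)$ denotes the minimal number of members of $\mathcal{W}$ covering $Y$: indeed, if $W_1,\dots,W_k$ cover $Y\supseteq T(X)$, their preimages cover $X$. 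Splitting off the zeroth factor, $\bigvee_{i=0}^{n-1}T^{-i}\mathcal{U}=\mathcal{U}\vee T^{-1}\bigl(\bigvee_{i=0}^{n-2}T^{-i}\mathcal{U}\bigr)$, and combining submultiplicativity of $\mathcal{N}$ with the displayed bound (plus invariance of $Y$, so that $\bigl(\bigvee_{i=0}^{n-2}T^{-i}\mathcal{U}\bigr)\big|_Y=\bigvee_{i=0}^{n-2}(T|_Y)^{-i}(\mathcal{U}|_Y)$) yields $\mathcal{N}\bigl(\bigvee_{i=0}^{n-1}T^{-i}\mathcal{U}\bigr)\le \mathcal{N}(\mathcal{U})\cdot \mathcal{N}\bigl(\bigvee_{i=0}^{n-2}(T|_Y)^{-i}(\mathcal{U}|_Y)\bigr)$. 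Dividing by $n$ and letting $n\to\infty$ gives $h(T,\mathcal{U})\le h(T|_Y,\mathcal{U}|_Y)\le h(T|_Y)$, and taking the supremum over $\mathcal{U}$ closes the reverse inequality. Applying this with $(X,T)=(X_\cF,B_\cF)$ and $Y=X_{\sigma_{A}(\cF)}$ then gives $\AE(\cF)=h(B_\cF)=h(B_{\sigma_{A}(\cF)})=\AE(\sigma_{A}(\cF))$.

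I expect the main obstacle to be exactly this reverse inequality $h(T)\le h(T|_Y)$, since it runs opposite to the familiar monotonicity of entropy under passage to subsystems; the whole point is that $T(X)\subseteq Y$ renders $X\setminus Y$ dynamically transient, so it cannot contribute to the exponential growth rate of the covers. Once the estimate $\mathcal{N}(T^{-1}\mathcal{W})\le \mathcal{N}(\mathcal{W}|_Y)$ is in place, the remainder is routine bookkeeping with Definition \ref{topological entropy}. One could instead phrase the same idea through the eventual range, noting $h(T)=h\bigl(T|_{\bigcap_n T^nX}\bigr)$ and $\bigcap_n T^nX\subseteq X_{\sigma_{A}(\cF)}$, but the direct open-cover computation keeps the argument self-contained.
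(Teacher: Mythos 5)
Your proof is correct, and for the only nontrivial inequality it runs on a genuinely different mechanism than the paper's. The paper's entire justification is the observation (stated just before the corollary) that $X_\cF\setminus X_{\sigma_{A}(\cF)}$ contains at most one element, namely $\kappa_0$; the implicit entropy step is that adjoining a single point to a closed invariant subsystem costs at most one extra member in any minimal subcover, so $\mathcal{N}\bigl(\bigvee_{i=0}^{n-1}B_\cF^{-i}\,\mathcal{U}\bigr)\le \mathcal{N}\bigl(\bigvee_{i=0}^{n-1}B_{\sigma_A(\cF)}^{-i}(\mathcal{U}|_{X_{\sigma_A(\cF)}})\bigr)+1$ and the $+1$ vanishes after taking $\frac1n\log$. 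You instead exploit the stronger structural fact $B_\cF(X_\cF)\subseteq X_{\sigma_{A}(\cF)}$ and prove the general statement that $h(T)=h(T|_Y)$ whenever $Y$ is closed, invariant, and absorbs the forward image of $X$; your estimate $\mathcal{N}(T^{-1}\mathcal{W})\le \mathcal{N}(\mathcal{W}|_Y)$, the splitting $\bigvee_{i=0}^{n-1}T^{-i}\mathcal{U}=\mathcal{U}\vee T^{-1}\bigl(\bigvee_{i=0}^{n-2}T^{-i}\mathcal{U}\bigr)$, and the identification $\bigl(\bigvee_{i=0}^{n-2}T^{-i}\mathcal{U}\bigr)\big|_Y=\bigvee_{i=0}^{n-2}(T|_Y)^{-i}(\mathcal{U}|_Y)$ (which uses invariance of $Y$) are all sound. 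The paper's route is shorter and entirely local to this situation; yours is longer but buys a reusable lemma that would still work if $X_\cF\setminus X_{\sigma_A(\cF)}$ were large, e.g.\ for iterated shifts or other settings where only the eventual image is controlled. Both identifications of the canonical representations ($z'_n=z_{n+1}$, $\kappa'_n=\kappa_{n+1}$, hence $X_{\sigma_A(\cF)}=\overline{\{\kappa_n:n\ge 1\}}$ as a closed invariant subset of $X_\cF$ carrying the restricted shift) are exactly as in Theorem \ref{thm_dynamicalGN}, so no gap there.
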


The following lemma considers the anqie entropy of the inverse of an arithmetic function.

\begin{lemma}\label{quotient of zero entropy functions}
Let $c$ be a positive constant. Then for any $f\in l^{\infty}(\mathbb{N})$ with $|f(n)|>c$ for all $n\in \mathbb{N}$, one has $\AE(1/f)=\AE(f)$.
\end{lemma}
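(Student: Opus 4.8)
The plan is to realize $1/f$ as the image of $f$ under a continuous function that is well defined precisely because of the uniform lower bound, and then to exploit the symmetry between $f$ and $1/f$. The crucial first observation is that the hypothesis $|f(n)|>c$ forces $f(\bN)\subseteq\{z\in\bC:|z|\ge c\}$, and since the latter set is closed, the closure $X_0:=\overline{f(\bN)}$ also lies in $\{|z|\ge c\}$. In particular $0\notin X_0$, so the map $g(z)=1/z$ is continuous on $X_0$.

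Next I would invoke Theorem \ref{thm_dynamicalGN}(\romannumeral1), which identifies $\cA_0$, the C*-algebra generated by $f$, with $C(X_0)$ via the isometry $\varpi$ sending $h\in C(X_0)$ to the arithmetic function $n\mapsto h(z_n)=h(f(n))$. Applying this to $g\in C(X_0)$ gives $\varpi(g)(n)=1/f(n)$, so $1/f=\varpi(g)\in\cA_0$. Since $\cA_0\subseteq\cA_f$ and $\cA_f$ is a $\sigma_A$-invariant C*-algebra, the smallest such algebra containing $1/f$, namely $\cA_{1/f}$, satisfies $\cA_{1/f}\subseteq\cA_f$. Lemma \ref{basic property}(\romannumeral1) then yields $\AE(1/f)=\AE(\cA_{1/f})\le\AE(\cA_f)=\AE(f)$. (Equivalently, one may cite the remark following Lemma \ref{basic property}, since $1/f$ is obtained by applying the continuous function $g$ to $f$.)

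For the reverse inequality I would apply the same argument to $1/f$ in place of $f$. Here $1/f\in l^\infty(\bN)$ because $|1/f(n)|=1/|f(n)|\le 1/c$, and it is bounded below since $|1/f(n)|=1/|f(n)|\ge 1/\|f\|_{l^\infty}$, where $0<\|f\|_{l^\infty}<\infty$. Thus $1/f$ satisfies the hypothesis of the lemma with the positive constant $c'=1/(\,\|f\|_{l^\infty}+1)$, and since $f=1/(1/f)$ pointwise, the first part gives $\AE(f)=\AE\!\big(1/(1/f)\big)\le\AE(1/f)$. Combining the two inequalities gives $\AE(1/f)=\AE(f)$.

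The only real point requiring care is the verification that $0\notin X_0$, which is exactly where the uniform bound $|f(n)|>c$ enters: without it, $1/f$ might fail to be a continuous function of $f$ on $\overline{f(\bN)}$, and the membership $1/f\in\cA_0$ could break down. Everything else reduces to the functorial behavior of anqie entropy already established in Lemma \ref{basic property} and Theorem \ref{thm_dynamicalGN}.
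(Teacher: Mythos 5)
Your proof is correct, but it packages the argument differently from the paper. The paper works entirely on the spectrum side: it takes the canonical representations $(X_f,B_f)$ and $(X_{1/f},B_{1/f})$ from Theorem \ref{thm_dynamicalGN} and exhibits the explicit coordinate-wise inversion $\varphi((\omega_0,\omega_1,\ldots))=(1/\omega_0,1/\omega_1,\ldots)$ as a homeomorphism intertwining the two shifts, so equality of the entropies follows in one stroke from conjugacy invariance. You instead work on the algebra side: the uniform lower bound makes $z\mapsto 1/z$ continuous on $\overline{f(\bN)}$, so $1/f\in\cA_f$, whence $\cA_{1/f}\subseteq\cA_f$ and $\AE(1/f)\le\AE(f)$ by the monotonicity in Lemma \ref{basic property}(\romannumeral1); the reverse inequality comes from observing that $1/f$ again satisfies the hypotheses (with $c'=1/(\|f\|_{l^\infty}+1)$, say) and that $f=1/(1/f)$. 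Both arguments hinge on the same essential point you correctly isolate --- that $0$ stays out of the closure of the range, which is exactly what the paper needs for $\varphi$ to be well defined and continuous --- and your two containments amount to $\cA_f=\cA_{1/f}$, which is the algebraic shadow of the paper's conjugacy. What your route buys is that it avoids checking that an explicit map between the sequence spaces is a homeomorphism commuting with the shift; what the paper's route buys is a single self-contained step and a reusable template (the same conjugacy trick underlies several other computations in Section \ref{anqie entropy}). Either is acceptable; your symmetry step is fully justified since $\|f\|_{l^\infty}<\infty$ and $|f(n)|>c>0$ guarantee the strict lower bound for $1/f$.
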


\begin{proof}
Let $(X_{f},B_{f})$ and $(X_{1/f},B_{1/f})$ be the canonical representations of $\mathcal{A}_{f}$ and $\mathcal{A}_{1/f}$, respectively. Applying Theorem \ref{thm_dynamicalGN},
the map $\varphi: X_{f}\rightarrow X_{1/f}$ defined by $\varphi((\omega_{0},\omega_{1},\ldots))=(1/\omega_{0},1/\omega_{1},\ldots)$, for $(\omega_{0},\omega_{1},\ldots)\in X_{f}$,
is a homeomorphism satisfying $\varphi\circ B_{f}=B_{1/f}\circ \varphi$. Then the topological entropy of $B_{f}$ equals that of $B_{1/f}$ and thus $\AE(f)=\AE(1/f)$.
\end{proof}

Now we prove Theorems \ref{infinity entropy is dense in the set of all functions} and \ref{thm_semi_continuous}.

\begin{proof} [Proof of Theorem \ref{infinity entropy is dense in the set of all functions}]
From Example \ref{an example of infinity entropy}, we see that $\mathcal{E}_{\infty}(\mathbb{N})\neq \emptyset$. Choose a $g\in \cE_{\infty}(\bN)$ such that $g\neq 0$ and $\AE(cg)=+\infty$ for any complex number $c\neq 0$. For each $f\in l^\infty(\bN)$ with $\AE(f)<+\infty$, we have
$\left\|\left(f+\epsilon\cdot g\right)-f\right\|_{l^\infty}= \epsilon\cdot \|g\|_{l^\infty}\rightarrow  0~~(\varepsilon \rightarrow 0^+)$.
It follows from Corollary \ref{cor_AE(f+g)_leq_AEf+AEg} that
$\AE(f+\epsilon\cdot g)\geq \AE(\epsilon\cdot g)-\AE(f)=+\infty$.
\end{proof}

\begin{proof} [Proof of Theorem \ref{thm_semi_continuous}]
Without loss of generality, we assume that $\sup_{i\in \bN}\|f_i\|_{l^\infty} \leq 1/2$. Then for any $\cG=\{g_{0},g_{1},\ldots\}\subseteq l^\infty(\bN)$ with $\sup\nolimits_{i\geq 0}\|g_i-f_i\|_{l^{\infty}}<\delta<1/2$, all $\overline{f_i(\bN)}$ and $\overline{g_i(\bN)}$ $(i\in \bN)$ lie in the closed unit disc $\bD$ on the complex plane. Write $\iota_n=
\left(f_i(n+j)\right)_{i,j\in \bN}$ and $\tau_n=
\left(g_i(n+j)\right)_{i,j\in \bN}$, viewed as matrices of infinite dimension with columns given by elements in $\prod\nolimits_{i=0}^\infty\overline{f_i(\bN)}$ and $\prod\nolimits_{i=0}^\infty\overline{g_i(\bN)}$, respectively. The shift map $B$ on $(\bD^\bN)^{\mathbb{N}}$ sends $\omega=(\omega_{i,j})_{i,j\in \bN}$ to $B(\omega)=(\omega_{i,j+1})_{i,j\in \bN}$. Recalling Theorem \ref{thm_dynamicalGN}, we consider the canonical representation $(X_\cF, B_\cF)$ and $(X_\cG, B_\cG)$ of $\cA_\cF$ and $\cA_\cG$, respectively, where $B_\cF=B|_{X_\cF}$ and $B_\cG=B|_{X_\mathcal{G}}$. There is a metric $d$ on $(\bD^\bN)^{\mathbb{N}}$ given by
\[
d(\omega,\omega^\prime)=\sum\limits_{j\in \bN} \sum\limits_{i\in \bN} 2^{-j-i-2} |\omega_{i,j}-\omega^\prime_{i,j}|, \quad \text{for any}~  \omega=\{\omega_{i,j}\}_{i,j\in \bN}~\text{and} ~\omega^\prime=\{\omega^\prime_{i,j}\}_{i,j\in \bN}.
\]
It is not hard to see that $d(\iota_n,\tau_n)<\delta$ for any $n\in\mathbb{N}$ whenever $\sup_{i\geq 0}\|f_i-g_i\|_{l^{\infty}}<\delta$. Define open sets $U_{n,m}=\left\{\omega\in X_\cF:\, d(\omega,\iota_n)<10^{-m}\right\}$ and $V_{n,m}=\left\{\omega\in X_\cG:\, d(\omega,\tau_n)<10^{-m}\right\}$. Then $\cU_m=\{U_{n,m}:n\in \bN\}$ and $\cV_m=\{V_{n,m}:n\in \bN\}$ $(m\in \bN)$ are refining sequences of open covers for $X_\cF$ and $X_\cG$, respectively. By \cite[Property 12]{Adl-Kon-And65}, we obtain
\[
h(B_\cF)=\lim\limits_{m\rightarrow \infty} h(B_\cF, \cU_m),\quad h(B_\cG)=\lim\limits_{m\rightarrow \infty} h(B_\cG, \cV_m).
\]
Moreover, the sequences on the right side of above equalities are non-decreasing. Now for any $\epsilon>0$, there is a sufficiently large $m^\prime$ such that
\[
h(B_\cF, \cU_{m^\prime-1})> h(B_\cF)-\epsilon.
\]
For any $s\geq 1$, let $\beta_{s}$ be a subcover of $\bigvee\nolimits_{0\leq j\leq s-1}B_\cG^{-j}(\cV_{m^\prime})$ of $X_\cG$ 
satisfying that the cardinality of $\beta_s$ is equal to $\cN(\bigvee\nolimits_{j=0}^{s-1}B_\cG^{-j}(\cV_{m^\prime}))$ (the minimal cardinality of all possible subcovers). Suppose that
\[
\beta_s=\left\{V_{n_0,m^\prime}\cap B_\cG^{-1}(V_{n_1,m^\prime})\cap \cdot\cdot\cdot \cap B_\cG^{-s+1}(V_{n_{s-1},m^\prime}):(n_{0},n_{1},\ldots,n_{s-1})\in \mathcal{S}_{m^\prime}\right\},
\]
where $\mathcal{S}_{m'}$ is some subset of the Cartesian product of $s$ copies of $\mathbb{N}$.

We next show that
\[
\left\{U_{n_0,m^\prime-1}\cap B_\cF^{-1}(U_{n_1,m^\prime-1})\cap \cdots \cap B_\cF^{-s+1}(U_{n_{s-1},m^\prime-1}):(n_{0},n_{1},\ldots,n_{s-1})\in \mathcal{S}_{m^\prime}\right\},
\]
denoted by $\alpha_s$,
is an open cover for $X_\cF$. Indeed, for any $\omega\in X_\cF$, there is an $n^\prime\in \bN$ such that $d(\omega,\iota_{n^\prime})<10^{-m^\prime-s+1}$. One may verify that
\[
d\left(B_\cF^j \omega,\iota_{n^\prime+j}\right)=d\left(B_\cF^j \omega, B_\cF^j \iota_{n^\prime}\right)\leq 2^j \cdot d(\omega,\iota_{n^\prime})<2^j\cdot 10^{-m^\prime-s+1}<10^{-m^\prime}
\]
for $0\leq j\leq s-1$. Assume further that
\[
\tau_{n^\prime}\in V_{n_0,m^\prime}\cap B_\cG^{-1}(V_{n_1,m^\prime})\cap \cdot\cdot\cdot \cap B_\cG^{-s+1}(V_{n_{s-1},m^\prime}),
\]
which is an open set in the cover $\beta_s$. For $0\leq j\leq s-1$, the condition $\tau_{n^\prime}\in B_\cG^{-j}(V_{n_j,m^\prime})$ implies $d\left(\tau_{n^\prime+j}, \tau_{n_j}\right)<10^{-m^\prime}$. It follows that
\begin{align*}
d\left(B_\cF^j \omega,\iota_{n_j}\right)&\leq d\left(B_\cF^j \omega,\iota_{n^\prime+j}\right)+d(\iota_{n^\prime+j},\tau_{n^\prime+j})
+d(\tau_{n^\prime+j},\tau_{n_j})+d(\tau_{n_j},\iota_{n_j})\\
&< 10^{-m^\prime}+\delta+10^{-m^\prime}+\delta<10^{-m^\prime+1},\quad (0\leq j\leq s-1),
\end{align*}
provided that $\delta<10^{-m^\prime}$ (this choice is independent of $s$). Therefore,
\[
\omega\in U_{n_0,m^\prime-1}\cap B_\cF^{-1}(U_{n_1,m^\prime-1})\cap \cdot\cdot\cdot \cap B_\cF^{-s+1}(U_{n_{s-1},m^\prime-1})
\]
for any $s\geq 1$. So $\alpha_s$ is an open cover of $X_\cF$ and a subcover of $\bigvee\nolimits_{0\leq j\leq s-1}B_\cF^{-j}(\cU_{m^\prime-1})$. Hence one has
\[
\cN(\bigvee\nolimits_{j=0}^{s-1}B_\cF^{-j}(\cU_{m^\prime-1}))\leq \cN(\bigvee\nolimits_{j=0}^{s-1}B_\cG^{-j}(\cV_{m^\prime})).
\]
Letting $s$ tend to infinity, we then have
\begin{align*}
&h(B_\cG, \cV_{m^\prime})=\lim\limits_{s\rightarrow \infty} \frac{\log \cN(\bigvee\nolimits_{j=0}^{s-1}B_\cG^{-j}(\cV_{m^\prime}))}{s}\\
&\geq \lim\limits_{s\rightarrow \infty} \frac{\log \cN(\bigvee\nolimits_{j=0}^{s-1}B_\cF^{-j}(\cU_{m^\prime-1}))}{s}=h(B_\cF,\cU_{m^\prime-1})>h(B_\cF)-\epsilon.
\end{align*}
It follows that $h(B_\cG)\geq h(B_\cG, \cV_{m^\prime})>h(B_\cF)-\epsilon$, i.e., ${\AE}(\cG)>{\AE}(\cF)-\epsilon$.
\end{proof}

\begin{remark}
{\rm
In the statement of Theorem \ref{thm_semi_continuous}, if $\cF=\{f_{0},f_{1},\ldots\}$ is a family of bounded arithmetic functions with $\AE(\mathcal{F})=+\infty$, by similar proof to above, for any $N>0$, there is a $\delta>0$, such that ${\AE}(\cG)>N$ whenever $\cG=\{g_0,g_{1},\ldots\}\subseteq l^\infty(\bN)$ satisfies $\sup_{i\geq 0}\|g_i-f_i\|_{l^\infty}<\delta$.}
\end{remark}

\begin{corollary} \label{cor_semi_continuous}
View ${\AE}: \, l^\infty(\bN) \rightarrow [0,+\infty]$ as a map defined on the set of all bounded arithmetic functions. It is lower semi-continuous. Equivalently, if $\{f_k\}_{k=0}^\infty\subseteq l^\infty(\bN)$ is a sequence of functions that converges to $f\in l^\infty(\bN)$, i.e., $\lim\limits_{k\rightarrow \infty}\|f_k-f\|_{l^{\infty}}=0$, then ${\AE}(f)\leq \liminf\limits_{k\rightarrow \infty} {\AE}(f_k)$.
\end{corollary}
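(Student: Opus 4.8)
The plan is to deduce the corollary directly from Theorem \ref{thm_semi_continuous} together with the Remark following it, by specializing the perturbation statement to one-element families. First I would observe that, since $l^\infty(\bN)$ is a metric space under the sup-norm, lower semi-continuity of $\AE$ is equivalent to the sequential formulation stated in the corollary. Hence it suffices to fix an arbitrary sequence $\{f_k\}_{k=0}^\infty$ with $\|f_k-f\|_{l^\infty}\to 0$ and establish $\AE(f)\le \liminf_{k\to\infty}\AE(f_k)$.

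Then I would split the argument according to the value of $\AE(f)$. When $\AE(f)<+\infty$, I apply Theorem \ref{thm_semi_continuous} to the singleton family $\cF=\{f\}$, for which $\AE(\cF)=\AE(f)<+\infty$. Given $\epsilon>0$, the theorem yields $\delta>0$ such that every $\cG=\{g\}$ with $\|g-f\|_{l^\infty}<\delta$ satisfies $\AE(g)>\AE(f)-\epsilon$. Since $f_k\to f$, there is a $K$ with $\|f_k-f\|_{l^\infty}<\delta$ for all $k\ge K$; choosing $\cG=\{f_k\}$ gives $\AE(f_k)>\AE(f)-\epsilon$ for all such $k$, so $\liminf_{k\to\infty}\AE(f_k)\ge \AE(f)-\epsilon$. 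Letting $\epsilon\to 0^+$ produces $\liminf_{k\to\infty}\AE(f_k)\ge \AE(f)$.

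For the remaining case $\AE(f)=+\infty$, I would invoke the Remark after Theorem \ref{thm_semi_continuous}, which handles exactly the infinite-entropy situation: for every $N>0$ there is a $\delta>0$ with $\|g-f\|_{l^\infty}<\delta$ forcing $\AE(g)>N$. Exactly as above, the functions $f_k$ meet this bound for all large $k$, so $\liminf_{k\to\infty}\AE(f_k)\ge N$; as $N$ is arbitrary, $\liminf_{k\to\infty}\AE(f_k)=+\infty=\AE(f)$. Combining the two cases gives the desired inequality in all situations.

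Since this is a corollary, I do not expect a genuine obstacle: the substantive work was already done in Theorem \ref{thm_semi_continuous}. The only points that require care are checking that the metric-space setting renders the neighborhood and sequential definitions of lower semi-continuity equivalent, and remembering to route the infinite-entropy case through the Remark rather than through the theorem itself, whose hypothesis presupposes $\AE(\cF)<+\infty$.
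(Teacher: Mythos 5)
Your proposal is correct and matches the paper's (implicit) argument: the corollary is stated without proof precisely because it follows by specializing Theorem \ref{thm_semi_continuous} to singleton families for the finite-entropy case and invoking the Remark for the infinite-entropy case, exactly as you do. The case split and the passage from the $\epsilon$--$\delta$ formulation to the sequential one are handled correctly.
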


\section{The arithmetic compactification of natural numbers}\label{the arithmetic compactification of natural numbers}
In this section we shall use properties of anqie entropy to construct a new compactification of natural numbers which is comparable to the Stone-${\rm\check{C}}$ech compactification.
The function space of our compactification is described by the following theorem.

\begin{theorem} \label{thm_C_0_subCalg}
Let $\cE_0(\bN)$ be the set of all functions in $l^\infty(\bN)$ with vanishing anqie entropy, i.e.,
$\cE_0(\bN)=\left\{f\in l^\infty(\bN):\, {\AE}(f)=0\right\}$.
Then $\cE_0(\bN)$ is a subanqie of $l^\infty(\bN)$.
\end{theorem}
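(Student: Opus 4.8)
The plan is to check directly that $\cE_0(\bN)$ satisfies the two defining requirements of a subanqie: it is a unital, norm-closed $*$-subalgebra of $l^\infty(\bN)$, and it is invariant under the shift $\sigma_A$. The algebraic closure conditions I would dispatch first, since they follow immediately from the sub-additivity estimates already in hand. For $f,g\in\cE_0(\bN)$, Corollary \ref{cor_AE(f+g)_leq_AEf+AEg} gives $\AE(f+g)\le\AE(f)+\AE(g)=0$ and $\AE(f\cdot g)\le\AE(f)+\AE(g)=0$, so $f+g,\,f\cdot g\in\cE_0(\bN)$; for a scalar $c$, the function $cf$ is a degree-one polynomial in $f$, so Lemma \ref{basic property}(ii) gives $\AE(cf)\le\AE(f)=0$. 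The unit $1$ generates the anqie $\bC\cdot 1$, whose maximal ideal space is a single point and hence carries zero topological entropy, so $1\in\cE_0(\bN)$.

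For the $*$-operation I would observe that $\cA_f$, being a C*-algebra, contains $f^*=\overline f$, and being $\sigma_A$-invariant it then contains the entire anqie $\cA_{\overline f}$; thus $\cA_{\overline f}\subseteq\cA_f$ and Lemma \ref{basic property}(i) yields $\AE(\overline f)\le\AE(f)=0$ (alternatively, apply the remark following Lemma \ref{basic property} to the continuous map $z\mapsto\overline z$). Shift invariance is equally quick: applied to the singleton $\cF=\{f\}$, Corollary \ref{cor_AE(AF)=AE(f)} gives $\AE(\sigma_A f)=\AE(f)=0$, so $\sigma_A f\in\cE_0(\bN)$.

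The only substantive point, and the step I expect to carry the real weight, is that $\cE_0(\bN)$ is closed in the $l^\infty$ norm; this is exactly where the lower semi-continuity of the anqie entropy is indispensable. Given $\{f_k\}\subseteq\cE_0(\bN)$ converging in norm to $f\in l^\infty(\bN)$, Corollary \ref{cor_semi_continuous} (equivalently Theorem \ref{thm_semi_continuous}) forces $\AE(f)\le\liminf_{k\to\infty}\AE(f_k)=0$, whence $f\in\cE_0(\bN)$. Assembling the pieces, $\cE_0(\bN)$ is a unital norm-closed $*$-subalgebra of $l^\infty(\bN)$ that is $\sigma_A$-invariant, i.e.\ a subanqie. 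In effect all the difficulty has already been absorbed into Theorem \ref{thm_semi_continuous}; the present argument is the routine verification that the remaining closure properties are inherited from the sub-additivity of $\AE$.
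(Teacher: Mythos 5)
Your proof is correct and follows essentially the same route as the paper's: Corollary \ref{cor_AE(f+g)_leq_AEf+AEg} for the algebraic and adjoint closure, Corollary \ref{cor_semi_continuous} (lower semi-continuity) for norm closedness, and Corollary \ref{cor_AE(AF)=AE(f)} for $\sigma_A$-invariance. You merely spell out a few details (scalar multiples, the unit, and complex conjugation via Lemma \ref{basic property}) that the paper's one-paragraph proof leaves implicit.
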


\begin{proof}
Corollary \ref{cor_AE(f+g)_leq_AEf+AEg} shows that $\cE_0(\bN)$ is an algebra closed under complex conjugation.
Suppose that $f\in l^\infty(\bN)$ and $\{f_k\}_{k=0}^\infty$ is a sequence in $\cE_0(\bN)$
with $\lim\limits_{k\rightarrow\infty}\|f_k-f\|_{l^{\infty}}=0$. By Corollary \ref{cor_semi_continuous}, one has $0\leq {\AE}(f)\leq \liminf\limits_{k\rightarrow \infty} {\AE}(f_k)=0$.
This implies $f\in \cE_0(\bN)$. Hence, the algebra $\cE_0(\bN)$ is closed under $l^\infty$-norm.
By Corollary \ref{cor_AE(AF)=AE(f)}, it is also $\sigma_{A}$-invariant.
Thus $\cE_0(\bN)$ is an anqie.
\end{proof}

\begin{definition}
Let $\cE_0(\bN)$ be the set of all functions $f$ in $l^\infty(\bN)$ with $\AE(f)=0$. Denote by $E_0(\bN)$ the maximal ideal space of $\mathcal{E}_0(\bN)$,
and call it the \textbf{arithmetic compactification} of natural numbers.
\end{definition}

By definition, $\mathcal E_0(\mathbb{bN})\cong C(E_0(\mathbb{bN}))$. Define $\delta_m(n)$ to be $1$ when $n=m$, and $0$ otherwise. Clearly, for any $m\in \bN$ the function $\delta_m(n)$ has zero anqie entropy. Denote $C_{0}(\bN)$ by $\{f: \lim_{n\rightarrow \infty}f(n)=0\}$. Then $C_0(\bN)\subseteq \mathcal{E}_0(\bN)$. Each $\iota(m)\in \iota(\bN)$ is an isolated point in $E_0(\bN)$. In fact, the set $\{\omega\in E_{0}(\mathbb{N}): |\omega(\delta_{m})-\iota(m)(\delta_{m})|<\frac{1}{2}\}$ is a neighborhood of $\iota(m)$ in $E_{0}(\mathbb{N})$, which has only the point $\iota(m)$.

The following lemma is a generalization of \cite[Theorem 3]{Adl-Kon-And65}, where a finite product is replaced by an infinite one. The proof is similar. We omit the details here.

\begin{lemma} \label{lem_entropyleq_infinitecase}
Let $(X_\xi,T_\xi)$ $(\xi\in \Xi)$ be topological dynamical systems and $(X,T)$ be the product system $(\prod\nolimits_{\xi\in \Xi}X_\xi, \prod\nolimits_{\xi\in \Xi}T_\xi)$
indexed by $\Xi$. Then
\[
h\left(T\right)=\sum_{\xi\in \Xi}h(T_\xi).
\]
\end{lemma}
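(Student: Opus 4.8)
The plan is to prove the infinite-product formula $h(T)=\sum_{\xi\in\Xi}h(T_\xi)$ by reducing to the finite case \cite[Theorem 3]{Adl-Kon-And65}, which asserts $h(T_1\times T_2)=h(T_1)+h(T_2)$ and hence by induction $h(\prod_{\xi\in F}T_\xi)=\sum_{\xi\in F}h(T_\xi)$ for any finite subset $F\subseteq\Xi$. The two inequalities must be handled separately, and the lower bound is essentially immediate while the upper bound carries the real content.

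For the lower bound, I would argue that for each finite $F\subseteq\Xi$ the system $(\prod_{\xi\in F}X_\xi,\prod_{\xi\in F}T_\xi)$ is a topological factor of $(X,T)$, via the coordinate projection $\pi_F:\prod_{\xi\in\Xi}X_\xi\to\prod_{\xi\in F}X_\xi$, which is continuous, surjective, and intertwines the shifts since $T$ acts coordinatewise. By the factor inequality recalled in the excerpt (the paragraph after Definition \ref{topological entropy}), $h(\prod_{\xi\in F}T_\xi)\le h(T)$, so the finite case gives $\sum_{\xi\in F}h(T_\xi)\le h(T)$. Taking the supremum over all finite $F$ yields $\sum_{\xi\in\Xi}h(T_\xi)\le h(T)$, where the sum of the nonnegative extended reals $h(T_\xi)$ is understood as this supremum of finite partial sums.

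For the upper bound, the key observation is that every open cover $\cU$ of the product space $X=\prod_{\xi\in\Xi}X_\xi$ refines to a finitely-supported cover: by compactness $\cU$ admits a finite subcover, and each basic open set in the product topology depends on only finitely many coordinates, so there is a finite $F\subseteq\Xi$ and an open cover $\cU_F$ of $\prod_{\xi\in F}X_\xi$ with $\pi_F^{-1}(\cU_F)$ refining $\cU$. Since entropy with respect to a cover is monotone under refinement and $\pi_F$ is a factor map, one obtains $h(T,\cU)\le h(T,\pi_F^{-1}(\cU_F))\le h(\prod_{\xi\in F}T_\xi)=\sum_{\xi\in F}h(T_\xi)\le\sum_{\xi\in\Xi}h(T_\xi)$. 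Taking the supremum over all open covers $\cU$ gives $h(T)\le\sum_{\xi\in\Xi}h(T_\xi)$, completing the proof.

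The main obstacle is the upper bound, and specifically the claim that an arbitrary open cover of the infinite product can be refined by one pulled back from finitely many coordinates; this is where the product-topology structure and compactness are genuinely used, and it is precisely the step that replaces the finite-product bookkeeping in \cite{Adl-Kon-And65}. A secondary point requiring care is the handling of the $+\infty$ case: if some $h(T_\xi)=\infty$, or if infinitely many are positive so that the series diverges, the lower-bound argument via finite partial sums already forces $h(T)=\infty$, and the upper bound holds trivially, so both sides agree in $[0,+\infty]$. I would flag that the nonnegativity of topological entropy is what makes the supremum-of-finite-sums interpretation of the infinite sum legitimate.
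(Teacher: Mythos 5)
Your proof is correct; the paper itself omits the argument, remarking only that the proof is ``similar'' to the finite-product case of \cite{Adl-Kon-And65}, and your reduction---the lower bound from finite coordinate projections being factor maps, the upper bound from the fact that every open cover of the compact product is refined by one pulled back from finitely many coordinates---is exactly the standard way to fill in that omission. Both directions use, as the paper does elsewhere, the finite product equality $h(T_1\times T_2)=h(T_1)+h(T_2)$ cited from \cite[Theorem 3]{Adl-Kon-And65}, and your handling of the $+\infty$ case via suprema of finite partial sums is the right reading of the infinite sum.
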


\begin{lemma}
With the notation given in Theorem \ref{thm_C_0_subCalg}, we have $\AE(\mathcal{E}_{0}(\mathbb{N}))=0.$
\end{lemma}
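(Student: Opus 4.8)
The plan is to realize the whole system $(E_0(\bN),A)$ as a closed, invariant subsystem of an (uncountable) product of the one-generator systems $(X_f,B_f)$, and then to read off the entropy from the product formula of Lemma \ref{lem_entropyleq_infinitecase}. Since every generator $f$ of $\cE_0(\bN)$ has zero anqie entropy by the very definition of $\cE_0(\bN)$, the product has entropy $0$, and a subsystem can only carry less.

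Concretely, I would set $\Xi=\cE_0(\bN)$ and, for each $f\in\Xi$, let $(X_f,B_f)$ be the canonical representation of the sub-anqie $\cA_f\subseteq\cE_0(\bN)$ furnished by Theorem \ref{thm_dynamicalGN}, so that $h(B_f)=\AE(f)=0$. Define $\Psi\colon E_0(\bN)\to\prod_{f\in\Xi}X_f$ by $\Psi(\rho)=(\rho|_{\cA_f})_{f\in\Xi}$, the tuple of restrictions of the multiplicative state $\rho$ to each sub-anqie. I would then check three things: (i) $\Psi$ is weak*-continuous, directly from the definition of the product topology and the weak* topology; (ii) $\Psi$ intertwines $A$ with $\prod_{f}B_f$, since each $\cA_f$ is $\sigma_A$-invariant and the identity $A\rho(g)=\rho(\sigma_A g)$ forces $(A\rho)|_{\cA_f}=B_f(\rho|_{\cA_f})$; and (iii) $\Psi$ is injective, because $f\in\cA_f$ for every $f$, so the family $\{\rho|_{\cA_f}\}_{f\in\Xi}$ already recovers $\rho$ on all of $\cE_0(\bN)$. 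Being a continuous injection from a compact space into a Hausdorff one, $\Psi$ is a homeomorphism onto its image, which is closed and, by the intertwining, invariant. Thus $(E_0(\bN),A)$ is topologically conjugate to the subsystem $\bigl(\Psi(E_0(\bN)),\,(\prod_f B_f)|_{\Psi(E_0(\bN))}\bigr)$.

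To conclude, I would invoke the subsystem-monotonicity of topological entropy — the same fact already used in the proof of Theorem \ref{thm_entropy_additive}, namely that a closed invariant subset carries entropy at most that of the ambient system — to get $h(A)\le h(\prod_{f\in\Xi}B_f)$. By Lemma \ref{lem_entropyleq_infinitecase} this product entropy equals $\sum_{f\in\Xi}h(B_f)=\sum_{f\in\Xi}\AE(f)=0$. Hence $\AE(\cE_0(\bN))=h(A)=0$, as claimed.

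The main obstacle is the careful bookkeeping of the subsystem structure inside the uncountable product. In particular, injectivity of $\Psi$ is exactly what upgrades the situation to a genuine conjugacy rather than merely making $(E_0(\bN),A)$ a factor of the image (which would yield the inequality in the useless direction), and the intertwining identity must be verified against the definition of $A$ as the adjoint of $\sigma_A$. One should also confirm that Lemma \ref{lem_entropyleq_infinitecase} is applied legitimately over an uncountable index set, where the sum of identically zero terms is unambiguously $0$. An alternative, purely cover-theoretic route would instead fix an arbitrary finite open cover $\cU$ of $E_0(\bN)$, refine it by weak*-basic sets so that only finitely many functions $g_1,\ldots,g_n\in\cE_0(\bN)$ occur, factor $\cU$ through $X_{g_1,\ldots,g_n}$, and bound $h(A,\cU)\le\AE(g_1,\ldots,g_n)\le\sum_i\AE(g_i)=0$ using Theorem \ref{thm_entropy_additive}; taking the supremum over all $\cU$ again gives $0$.
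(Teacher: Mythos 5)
Your proposal is correct and follows essentially the same route as the paper: the paper likewise realizes $(E_0(\bN),A)$ as a subsystem of $\bigl(\prod_{f\in \cE_0(\bN)}X_f,\prod_{f}A_f\bigr)$ (via Theorem \ref{thm_dynamicalGN}) and concludes from Lemma \ref{lem_entropyleq_infinitecase} together with subsystem monotonicity that the entropy vanishes. Your explicit verification of the continuity, intertwining, and injectivity of $\Psi$ simply fills in details the paper leaves implicit.
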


\begin{proof}
Choose $\mathcal{F}=\mathcal{E}_{0}(\mathbb{N})$ in Theorem \ref{thm_dynamicalGN}, then $(E_0(\bN),A)$ can be viewed as a subsystem of $\left(\prod_{f\in \mathcal{E}_0(\bN)}X_f,\prod_{f\in \mathcal{E}_0(\bN)}A_f\right)$, where $X_{f}$ is the maximal ideal space of $\mathcal{A}_{f}$ and $A_{f}$ is the continuous map on $X_{f}$ extended by the map $\iota (n)\mapsto \iota (n+1)$ on $\iota(\mathbb{N})$. By Lemma \ref{lem_entropyleq_infinitecase}, $\AE(\mathcal{E}_{0}(\mathbb{N}))=0$.
\end{proof}

Each topological system has at least a maximal zero entropy factor \cite[Theorem 6.2.7]{YHS}.
Indeed, for $(\beta \bN, A)$, where $\beta\bN$ is the Stone-${\rm\check{C}}$ech compactification of $\bN$,
the system $(E_0(\bN), A)$ is exactly the one.

\begin{proposition} \label{thm_E_0_universal_property}
The following universal properties hold.

(\romannumeral1) If $\cA$ is an anqie with ${\AE}(\cA)=0$, then $\cA\subseteq \cE_0(\bN)$.

(\romannumeral2) The topological dynamical system $(E_0(\mathbb{N}), A,\iota(0))$ satisfies the following property:
for any point transitive dynamical system $(X,T,x_0)$ with $h(T)=0$,
there is a factor map $\pi: (E_0(\mathbb{N}),A,\iota(0))\rightarrow (X,T,x_0)$ such that $\pi(\iota(0))=x_0$.
Moreover, if another point transitive
system $(Z,K,z_0)$ with $h(K)=0$ satisfies this property, then it is equivalent to $(E_0(N),A,\iota(0))$.
\end{proposition}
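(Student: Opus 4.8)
The plan is to treat part (i) as a direct consequence of the monotonicity of anqie entropy, and to prove part (ii) by the standard argument identifying a universal object: first build, out of any zero-entropy point-transitive system, an anqie inside $l^\infty(\bN)$ whose dynamics reproduces that system; then invoke part (i) to land inside $\cE_0(\bN)$ and read off the factor map; and finally deduce uniqueness from a two-sided mapping argument combined with density of the orbit of the marked point. For part (i), let $\cA$ be an anqie with $\AE(\cA)=0$. For each $f\in\cA$, the anqie $\cA_f$ generated by $f$ is a subanqie of $\cA$, so Lemma \ref{basic property}(i) gives $\AE(f)=\AE(\cA_f)\le\AE(\cA)=0$, whence $f\in\cE_0(\bN)$; thus $\cA\subseteq\cE_0(\bN)$.

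For the existence half of (ii), I would first note that $(E_0(\bN),A,\iota(0))$ is itself point transitive with transitive point $\iota(0)$: since $A\iota(n)=\iota(n+1)$, the forward orbit of $\iota(0)$ is $\iota(\bN)$, which is dense by Proposition \ref{the set of natural numbers are dense in X}, and $h(A)=\AE(\cE_0(\bN))=0$. Now given a point transitive $(X,T,x_0)$ on a compact Hausdorff space with $h(T)=0$, I would pull back continuous functions along the orbit: define $\Psi:C(X)\to l^\infty(\bN)$ by $\Psi(g)(n)=g(T^n x_0)$. This is a $*$-homomorphism, injective because $\{T^n x_0\}$ is dense (so $\Psi(g)=0$ forces $g=0$), hence isometric with closed image $\cA:=\Psi(C(X))$. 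The key computation is $(\sigma_A\Psi(g))(n)=g(T^{n+1}x_0)=\Psi(g\circ T)(n)$, so $\cA$ is $\sigma_A$-invariant, i.e.\ an anqie, and $g\mapsto g\circ T$ corresponds to the additive map. Concretely, sending $x\in X$ to the character $\rho_x(\Psi(g))=g(x)$ gives a homeomorphism of $X$ onto the maximal ideal space $X_\cA$ which, by the description of the additive map in Proposition \ref{anqie is a shif invariant algebra}, conjugates $T$ to $A$; hence $\AE(\cA)=h(T)=0$. Part (i) then yields $\cA\subseteq\cE_0(\bN)$, and the restriction-of-characters map $\phi:E_0(\bN)\to X_\cA$ from the proof of Lemma \ref{basic property}(i) is a surjective factor map intertwining the additive maps. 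Composing with the conjugacy $X_\cA\cong X$ produces $\pi:E_0(\bN)\to X$ with $\pi\circ A=T\circ\pi$, and evaluating characters at $0$ gives $\pi(\iota(0))=x_0$, as required.

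For uniqueness, suppose $(Z,K,z_0)$ with $h(K)=0$ is point transitive and also enjoys the stated universal property. Applying the universal property of $E_0(\bN)$ to $(Z,K,z_0)$ yields a factor map $\pi_1:E_0(\bN)\to Z$ with $\pi_1(\iota(0))=z_0$; applying that of $Z$ to the zero-entropy point-transitive system $(E_0(\bN),A,\iota(0))$ yields $\pi_2:Z\to E_0(\bN)$ with $\pi_2(z_0)=\iota(0)$. Then $\pi_2\circ\pi_1$ fixes $\iota(0)$ and commutes with $A$, so it fixes every $\iota(n)=A^n\iota(0)$; since $\iota(\bN)$ is dense, $\pi_2\circ\pi_1=\mathrm{id}$. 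Symmetrically, using that $z_0$ is a transitive point for $K$, $\pi_1\circ\pi_2=\mathrm{id}_Z$. Thus $\pi_1,\pi_2$ are mutually inverse homeomorphisms intertwining the dynamics and matching the marked points, which is exactly the asserted equivalence.

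The main obstacle is the existence step, specifically verifying that the pulled-back algebra $\cA$ is genuinely an anqie conjugate to $(X,T)$: one must check the $\sigma_A$-invariance through the identity $\sigma_A\Psi(g)=\Psi(g\circ T)$ and confirm that the abstract additive map on $X_\cA$ really matches $T$ under the character identification, so that the entropy transfers and part (i) becomes applicable. Once this conjugacy is in place, both the construction of the factor map and the uniqueness argument are formal.
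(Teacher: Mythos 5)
Your proposal is correct and follows essentially the same route as the paper: part (i) via monotonicity of anqie entropy over subanqies, part (ii) by pulling back $C(X)$ along the orbit map $n\mapsto T^nx_0$ to obtain a zero-entropy anqie conjugate to $(X,T,x_0)$, invoking (i) and the restriction-of-characters factor map, and proving uniqueness by the two-sided mapping argument on the dense orbit of the marked point. Your explicit verification that $(E_0(\bN),A,\iota(0))$ is itself point transitive with zero entropy is a detail the paper leaves implicit but does rely on.
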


\begin{proof}
(\romannumeral1) For any $f\in \cA$, the anqie $\cA_f$ is a subanqie of $\cA$. It follows that ${\AE}(f)\leq {\AE}(\cA)=0$. Thus $\cA\subseteq \cE_0(\bN)$.

(\romannumeral2) Since $x_{0}$ is a transitive point for $(X,T)$, the map $n\mapsto T^{n}x_{0}$ gives rise to a map from $\mathbb{N}$ to $X$ with a dense range.
It induces an injective homomorphism, denoted by $\varrho$, from $C(X)$ into
$l^\infty(\mathbb{N})$, i.e., $(\varrho f)(n)=f(T^{n}x_{0})$ for any $f\in C(X)$ and $n\in\mathbb{N}$.
So $\varrho(C(X))$ is a C*-subalgebra of $l^{\infty}(\mathbb{N})$. Note that $f\circ T\in C(X)$ for any $f\in C(X)$.
Thus $\varrho(C(X))$ is $\sigma_{A}$-invariant and an anqie of $\mathbb{N}$. Suppose that $Y$ is the maximal ideal space of $\varrho(C(X))$.
It is easy to see that $\iota(n)\mapsto T^{n}x_{0}$ extends to a homeomorphism from $Y$ to $X$, also denoted by $\varrho$.
Hence $(Y,A,\iota(0))$ and $(X,T,x_{0})$ are equivalent. Since $\AE(\varrho(C(X)))=h(T)=0$, it follows from (\romannumeral1) that $\varrho(C(X))$
is a subanqie of $\mathcal{E}_{0}(\mathbb{N})$. Thus $(Y,A)$ is a topological factor of $(E_0(\mathbb{N}), A)$.
The factor map $\phi$ can be chosen as in equation (\ref{eq_induced_between_A_1_and_A_2}).
Therefore, the map $\pi=\varrho\circ\phi: (E_0(\mathbb{N}), A,\iota(0))\rightarrow (X,T,x_{0})$ is a factor map with $\pi(\iota(0))=x_0$.

Let $(Z,K,z_0)$ be another point transitive dynamical system with $h(K)=0$ satisfying the above properties.
Then there is a continuous surjective map $\varphi:E_0(\mathbb{N})\rightarrow Z$ such that $\varphi\circ A=K\circ\varphi$ and
$\varphi(\iota(0))=z_{0}$. It follows that $\varphi\circ A^{n}(\iota(0))=K^{n}\circ\varphi(\iota(0))$ and $\varphi(\iota(n))=K^{n}z_{0}$.
Symmetrically, there is a continuous surjective map $\psi:Z\rightarrow E_0(\mathbb{N})$ such that $\psi\circ K=A\circ\psi$
and $\psi(z_{0})=\iota(0)$. It follows that $\psi\circ K^{n}=A^{n}\circ\psi$ and $\psi(K^{n}z_{0})=\iota(n)$.
It is easy to see that $\psi$ is the inverse of $\varphi$. Thus $\varphi$ is a homeomorphism.
Therefore $(Z,K,z_0)$ and $(E_0(\mathbb{N}),A,\iota(0))$ are equivalent.
\end{proof}
To get a better understanding of functions with zero anqie entropy, we show some properties of the arithmetic compactification below.

\begin{proposition} \label{thm_ari_compa_property}
The arithmetic compactification $E_0(\mathbb{N})$ has the following properties.

(\romannumeral1) It is not extremely disconnected.

(\romannumeral2) Let $R$ be an infinite subset of $\bN$. The closure of the set $\iota (R)$ in $E_0(\bN)$ is uncountable.

(\romannumeral3) The space $E_0(\bN)$ is not metrizable.
\end{proposition}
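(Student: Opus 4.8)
The plan is to prove the three assertions in turn, with a single entropy estimate doing most of the work. Throughout I use that each $\iota(m)$ is an isolated point of $E_0(\bN)$ and that $\iota$ separates the points of $\bN$ (via the functions $\delta_m\in\cE_0(\bN)$), so that $\iota(A)$ is open for every $A\subseteq\bN$; and I use Theorem \ref{thm_dynamicalGN}, which identifies the canonical representation $(X_f,B_f)$ of $\cA_f$ with the shift on the orbit closure of $(f(n))_n$, so that $\AE(f)$ is the topological entropy of this subshift. For (i) I would produce a set $A\subseteq\bN$ with $\AE(\chi_A)>0$; the standard choice is the disjunctive sequence obtained by concatenating all finite $\{0,1\}$-words, whose orbit closure is the full $2$-shift, giving $\AE(\chi_A)=\log 2$. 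Now $\iota(A)$ is open. If $E_0(\bN)$ were extremely disconnected, then $\overline{\iota(A)}$ would be clopen, so its indicator lies in $C(E_0(\bN))\cong\cE_0(\bN)$; since each $\iota(m)$ with $m\notin A$ is isolated and outside $\iota(A)$, it lies outside $\overline{\iota(A)}$, whence this indicator composed with $\iota$ equals $\chi_A$. This forces $\AE(\chi_A)=0$, a contradiction, proving (i).

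The technical heart, used for both (ii) and (iii), is the following estimate, which I would isolate as a lemma: if $f\in l^\infty(\bN)$ is supported on a lacunary set $S=\{s_0<s_1<\cdots\}$ (say $s_{k+1}\ge 2s_k$ for all large $k$), then $\AE(f)=0$. To see this I bound the number $p_\epsilon(n)$ of distinct length-$n$ windows $(x_0,\dots,x_{n-1})$ of points of $X_f$ up to resolution $\epsilon$. Because the gaps of $S$ eventually exceed $n$, there is a threshold $A(n)$, growing only linearly in $n$ by lacunarity, beyond which every length-$n$ window carries at most one nonzero entry; such windows number at most $(n+1)N_\epsilon$, where $N_\epsilon$ is the cardinality of an $\epsilon$-net of $\overline{f(\bN)}$, while all remaining windows start at one of the at most $A(n)+1$ initial positions. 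Hence $p_\epsilon(n)=O_\epsilon(n)$ is polynomial, so in the open-cover definition of Definition \ref{topological entropy} any finite cover is refined by one depending on coordinates $0,\dots,L$ up to resolution $\epsilon$, giving $\cN(\bigvee_{i=0}^{n-1}B_f^{-i}\cV)\le p_\epsilon(n+L)=O_\epsilon(n)$ and thus $h(B_f,\cV)=0$, whence $h(B_f)=0$. I expect this window count to be the main obstacle: the statement fails without lacunarity for general complex-valued $f$, since slowly growing gaps let the initial, densely occupied block carry exponentially many distinct windows; so the lacunarity hypothesis must be used honestly, and some care is needed to pass from the window count to the cover definition.

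Granting the lemma, (ii) follows by choosing a lacunary subset $S\subseteq R$ (extractable greedily from any infinite $R$) and defining $f$ to vanish off $S$ with $(f(s_k))_k$ enumerating a dense subset of $[0,1]$. Then $\AE(f)=0$, so $f$ extends to $\hat f\in C(E_0(\bN))$ with $\hat f(\iota(s_k))=f(s_k)$; since $\iota(S)$ is dense in $\overline{\iota(S)}$ and $\hat f$ is continuous, $\hat f(\overline{\iota(S)})=\overline{f(S)}=[0,1]$. As the continuous image of $\overline{\iota(S)}$ is uncountable, so is $\overline{\iota(S)}$, and hence so is the larger set $\overline{\iota(R)}$.

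Finally, for (iii) I would fix one lacunary set $T$. Every subset $A\subseteq T$ is again lacunary, so the lemma gives $\chi_A\in\cE_0(\bN)$, and the uncountably many functions $\{\chi_A:A\subseteq T\}$ are pairwise at $l^\infty$-distance $1$. Thus $\cE_0(\bN)$ is not separable, hence not countably generated, and by the metrizability criterion of Proposition \ref{metrizable} the space $E_0(\bN)$ is not metrizable.
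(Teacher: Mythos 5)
Your proof is correct. Part (i) coincides with the paper's own argument: a disjunctive $\{0,1\}$-sequence gives $\AE(\chi_A)=\log 2$, the isolated points $\iota(m)$ make $\iota(A)$ open, and extreme disconnectedness would force $\chi_A$ into $\cE_0(\bN)$, a contradiction.

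For (ii) and (iii) you take a genuinely different route. The paper proves (ii) by combining the universal property of $(E_0(\bN),A,\iota(0))$ (Proposition \ref{thm_E_0_universal_property}) with the metric theorem that $\{n_k\theta\}$ is equidistributed modulo $1$ for almost every $\theta$; this yields a factor map onto an irrational rotation carrying $\iota(R)$ onto a dense subset of $S^1$. You instead isolate a self-contained lemma: any bounded function supported on a lacunary set has zero anqie entropy, because once the starting position exceeds (a linear function of) $n$ every length-$n$ window contains at most one nonzero entry, so the window complexity at resolution $\epsilon$ is $O_\epsilon(n)$; you then plant a function with range dense in $[0,1]$ on a lacunary subset of $R$ and push it forward to $\overline{\iota(R)}$. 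Your route avoids both the equidistribution input and the universal property, at the cost of the (correct, and appropriately hedged) passage from orbit window counts to the open-cover entropy of the orbit closure; the lemma is sound, since the limit points of the shifted sequences contribute only sequences with at most one nonzero coordinate. For (iii) the paper argues by first countability: a point of $E_0(\bN)\setminus\iota(\bN)$ would be a sequential limit of some $\iota(n_k)$, making $\overline{\iota(\{n_k\})}$ countable and contradicting (ii). You instead exhibit uncountably many characteristic functions of subsets of a single lacunary set, pairwise at $l^\infty$-distance $1$, so that $\cE_0(\bN)$ is non-separable and $E_0(\bN)$ cannot be compact metrizable; this yields the slightly stronger byproduct that $\cE_0(\bN)$ is a non-separable C*-algebra. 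One small citation slip: the implication you need is ``$E_0(\bN)$ metrizable $\Rightarrow$ $C(E_0(\bN))$ separable,'' which is the direction the paper attributes to \cite{KR} in the discussion preceding Proposition \ref{metrizable}; the proposition itself states only the converse. The mathematics is unaffected.
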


\begin{proof}
(\romannumeral1) Assume on the contrary that $E_0(\bN)$ is extremely disconnected. Then the closure of any open set is still open.
It is not hard to construct  a characteristic function $\chi_{S_{1}}(n)$ defined on a certain subset $S_{1}$ of $\mathbb{N}$ with $\AE(\chi_{S_{1}})\neq 0$. For example, choose $(\chi_{S_{1}}(0),\chi_{S_{1}}(1),\ldots)\in \{0,1\}^{\mathbb{N}}$ containing every finite sequences of 0's and 1's. Then by Lemma \ref{calculation formula of entropy}, $\AE(\chi_{S_{1}})=\log 2$.
Let $S_{0}=\mathbb{N}\setminus S_{1}$. Since each point $\iota (n)$ $(n\in \bN)$ is isolated in $E_{0}(\mathbb{N})$, $\iota(S_{1})$ is an open set in $E_{0}(\mathbb{N})$. Let $\overline{\iota (S_{1})}$ be the closure of $\iota (S_{1})$ in $E_{0}(\mathbb{N})$. Then by the assumption, we have that $\overline{\iota (S_{1})}$ in $E_{0}(\mathbb{N})$ is clopen and thus $\chi_{\overline{\iota (S_{1})}}\in C(E_{0}(\mathbb{N}))$. Since $\overline{\iota (S_{1})}\cap \iota(S_{0})=\emptyset$, the preimage of $\chi_{\overline{\iota (S_{1})}}$ under the inverse of the Gelfand transform (see equation (\ref{Gelfand transform})) in $\mathcal{E}_{0
}(\mathbb{N})$ is $\chi_{S_{1}}$. This contradicts the fact $\AE(\chi_{S_{1}})\neq 0$.

(\romannumeral2) Write $R=\{n_k\}_{k=0}^\infty$ with $n_0<n_1<n_2<\ldots$. Note that the sequence $\{n_k\theta\}_{k=0}^\infty$ is uniformly distributed modulo $1$ for almost all $\theta\in (0,1)$ (see \cite[Theorem 11.2.5]{Mur}). Choose one such $\theta$, and consider the transitive dynamical system $(S^1,R_\theta,1)$ with zero topological entropy, where $R_\theta$ is the irrational rotation with the angle $\theta$ on $S^{1}$. By Proposition \ref{thm_E_0_universal_property}, there is a factor map $\varphi$ from $(E_0(\bN),A)$ onto $(S^1,R_\theta)$ such that $\varphi(\iota(0))=1$. Moreover, $\{\varphi(\iota(n_k))\}_{k=0}^\infty=\{e^{2\pi i n_k \theta}\}_{k=0}^\infty$. The closure of the latter is $S^1$, which is uncountable. Therefore, the closure of $\{\iota (n_k)\}_{k=0}^\infty$ in $E_0(\bN)$ is also uncountable.

(\romannumeral3) Assume on the contrary that $E_0(\bN)$ is metrizable. Choose $\omega\in E_{0}(\mathbb{N})\setminus \iota(\mathbb{N})$. Since $E_{0}(\mathbb{N})$ is a metric space by the assumption, there is a sequence of integers $\{\iota (n_{k})\}_{k=0}^{\infty}$ such that $\iota (n_{k})$ converges to $\omega$ in $E_{0}(\mathbb{N})$ as $k\rightarrow \infty$. This contradicts the fact in (\romannumeral2) that the set $\{\iota (n_{0}),\iota (n_{1}),\ldots\}$ has infinite limit points.
\end{proof}

In general, for a compact Hausdorff space $X$, if $X$ is extremely disconnected, then it is totally disconnected.
However the converse may not be true. We have proved that $E_{0}(\mathbb{N})$ is not extremely disconnected. So it is interesting to
ask if $E_0(\bN)$ is totally disconnected.
We shall prove that $E_0(\bN)$ is totally disconnected in the next section.
At the end of this section, we show an equivalent condition about the statement that $E_{0}(\mathbb{N})$ is totally disconnected.

\begin{proposition}\label{the equivalent condition of totally disconnected}
Suppose that $\mathcal{F}_{0}(\mathbb{N})$ is the set of all arithmetic functions with zero anqie entropy and finite ranges. Then $E_{0}(\mathbb{N})$ is totally disconnected
if and only if $\mathcal{E}_{0}(\mathbb{N})=\overline{\mathcal{F}_{0}(\mathbb{N})}$, where $\overline{\mathcal{F}_{0}(\mathbb{N})}$ is
the closure of $\mathcal{F}_{0}(\mathbb{N})$ in $l^{\infty}(\mathbb{N})$.
\end{proposition}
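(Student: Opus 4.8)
The plan is to transport the statement across the Gelfand correspondence $\gamma:\mathcal{E}_0(\mathbb{N})\to C(E_0(\mathbb{N}))$ of equation~(\ref{Gelfand transform}), which is an isometric *-isomorphism, and then to invoke the classical characterization of total disconnectedness for compact Hausdorff spaces. First I would record the key dictionary entry: a function $f\in\mathcal{E}_0(\mathbb{N})$ has finite range if and only if its Gelfand transform $\hat f=\gamma(f)$ has finite range. Indeed, since $\iota(\mathbb{N})$ is dense in $E_0(\mathbb{N})$ by Proposition~\ref{the set of natural numbers are dense in X} and $\hat f(\iota(n))=f(n)$, the image $\hat f(E_0(\mathbb{N}))$ is compact, contains $f(\mathbb{N})$, and has $f(\mathbb{N})$ dense in it, so $\hat f(E_0(\mathbb{N}))=\overline{f(\mathbb{N})}$; this is finite exactly when $f(\mathbb{N})$ is finite. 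Consequently $\gamma$ carries $\mathcal{F}_0(\mathbb{N})$ bijectively onto the set of finite-range (equivalently, locally constant) continuous functions on $E_0(\mathbb{N})$, and because $\gamma$ is isometric, the identity $\mathcal{E}_0(\mathbb{N})=\overline{\mathcal{F}_0(\mathbb{N})}$ is equivalent to the density of the finite-range continuous functions in $C(E_0(\mathbb{N}))$.

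It then remains to prove, for the compact Hausdorff space $X=E_0(\mathbb{N})$, that the finite-range continuous functions are dense in $C(X)$ if and only if $X$ is totally disconnected. For the direction assuming density, given distinct points $x,y\in X$, the functions in $C(X)$ separate them, so some finite-range $G\in C(X)$ satisfies $G(x)\neq G(y)$; then $G^{-1}(\{G(x)\})$ is clopen (the point $G(x)$ being isolated in the finite range of $G$), contains $x$, and omits $y$, so clopen sets separate points and $X$ is totally disconnected. For the converse direction I would use that a compact Hausdorff totally disconnected space is zero-dimensional, i.e.\ has a base of clopen sets. Given $F\in C(X)$ and $\epsilon>0$, for each $x$ the open set $\{z:|F(z)-F(x)|<\epsilon\}$ contains a clopen neighborhood $U_x$ of $x$; I extract a finite subcover $U_{x_1},\dots,U_{x_k}$ by compactness, disjointify it into clopen sets $V_i=U_{x_i}\setminus\bigcup_{j<i}U_{x_j}$, and set $G=\sum_{i}F(x_i)\chi_{V_i}$, a finite-range continuous function with $\|F-G\|_\infty\le\epsilon$.

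Combining the dictionary entry with this topological equivalence yields the proposition. The step I expect to carry the real content is the converse topological direction, and within it the appeal to the equivalence \emph{total disconnectedness $=$ zero-dimensionality} for compact Hausdorff spaces: this is the classical fact that in a compact Hausdorff space the quasi-component and the connected component of each point coincide, so that separation of points by clopen sets upgrades to the existence of a clopen base. I would cite a standard topology reference for this rather than reproving it, after which the partition-into-clopen-sets argument above is routine.
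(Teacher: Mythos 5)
Your proof is correct, and one half of it diverges from the paper's argument in an interesting way. The direction ``density of finite-range functions $\Rightarrow$ $E_0(\mathbb{N})$ totally disconnected'' is essentially identical to the paper's: Urysohn plus density produces a finite-range continuous $\widetilde g$ with $\widetilde g(x)\neq\widetilde g(y)$, and $\widetilde g^{-1}(\{\widetilde g(x)\})$ is the separating clopen set. The other direction is where you differ. The paper works with its stated definition of total disconnectedness (every pair of points is separated by a clopen set), notes that $\chi_U\in\gamma(\mathcal F_0(\mathbb{N}))$ for clopen $U$ (the preimage $\gamma^{-1}(\chi_U)$ is a $\{0,1\}$-valued function of zero entropy, hence finite range), and concludes by the Stone--Weierstrass theorem that the separating *-subalgebra $\gamma(\mathcal F_0(\mathbb{N}))$ is dense in $C(E_0(\mathbb{N}))$. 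You instead upgrade ``clopen sets separate points'' to ``clopen base'' via the classical equivalence of total disconnectedness and zero-dimensionality for compact Hausdorff spaces, and then approximate an arbitrary $F\in C(E_0(\mathbb{N}))$ directly by $\sum_i F(x_i)\chi_{V_i}$ over a disjointified clopen cover. Both arguments are sound; the paper's is shorter because Stone--Weierstrass absorbs exactly the compactness work you do by hand, and it needs only point-separation by clopen sets rather than the (genuinely stronger-looking, though classically equivalent) clopen-base property. Your version buys a constructive $\varepsilon$-approximation and makes explicit that the finite-range continuous functions on $E_0(\mathbb{N})$ are precisely $\gamma(\mathcal F_0(\mathbb{N}))$ --- a dictionary entry the paper uses implicitly --- at the cost of importing the quasi-component/component coincidence from general topology. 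No gaps; just be sure to state, as you do, that the approximant $G$ pulls back into $\mathcal F_0(\mathbb{N})$ because $\gamma^{-1}(G)$ automatically lies in $\mathcal E_0(\mathbb{N})$ and inherits the finite range.
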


\begin{proof}
Assume that $E_{0}(\mathbb{N})$ is totally disconnected. Then for any $x,y\in E_{0}(\mathbb{N})$ with $x\neq y$,
there is a clopen set $U$ such that $x\in U, y\notin U$. Then $\chi_{U}\in C(E_{0}(\mathbb{N}))$ and $\chi_{U}(x)=1, \chi_{U}(y)=0$.
This implies that the *-subalgebra $\gamma(\mathcal{F}_{0}(\mathbb{N}))$ in $C(E_{0}(\mathbb{N}))$ separates the points in $E_{0}(\mathbb{N})$, where $\gamma$ is the Gelfand transform (see equation (\ref{Gelfand transform})).
Then by the Stone-Weierstrass theorem, we have $\gamma(\mathcal{F}_{0}(\mathbb{N}))$ is dense in $C(E_{0}(\mathbb{N}))$, and thus $\mathcal{E}_{0}(\mathbb{N})=\overline{\mathcal{F}_{0}(\mathbb{N})}$.

Conversely, suppose that $\mathcal{E}_{0}(\mathbb{N})=\overline{\mathcal{F}_{0}(\mathbb{N})}$. Given distinct points $x,y\in E_{0}(\mathbb{N})$,
by Urysohn's lemma, there is an $f\in C(E_{0}(\mathbb{N}))$ such that $f(x)\neq f(y)$. Then by the assumption,
there is a $g\in \mathcal{F}_{0}(\mathbb{N})$ such that $\widetilde{g}=\gamma(g)$ in $C(E_{0}(\mathbb{N}))$ satisfying $\widetilde{g}(x)\neq \widetilde{g}(y)$. Note that the continuous function $\widetilde{g}$ has a finite range. It is not hard to check that $\widetilde{g}^{-1}(\{\widetilde{g}(x)\})$ is a clopen
set in $E_{0}(\mathbb{N})$. Thus $x,y$ can be separated by a clopen set. It follows that $E_{0}(\mathbb{N})$ is totally disconnected.
\end{proof}


\section{Approximation method for arithmetic functions}\label{Approximation method for arithmetic functions}

In this section, we shall develop an approximation method for any map $f$ from $\mathbb{N}$ to $X$, where $X$ is a compact Hausdorff space. Recall that in Section 2, the anqie entropy of $f$ is defined to be the topological entropy of the Bernoulli shift $B$ (on $X^{\mathbb{N}}$) restricted to the space $X_{f}$, where $X_{f}$ is the closure of the set $\{(f(n),f(n+1),\ldots):n\in \mathbb{N}\}$ in $X^{\mathbb{N}}$.

In the approximation process, we need to compute anqie entropies of maps with finite ranges. First let us recall some basic concepts in symbolic dynamical systems. For a finite set $\mathbb{A}$,
a \emph{block} over $\mathbb{A}$ is a finite sequence of symbols from $\mathbb{A}$.  An \emph{$m$-block}
is a block of length $m$. For any given (finite or infinite) sequence $x=(x_{0},x_{1},\ldots)$ of symbols
from $\mathbb{A}$, we say that a block $w$ \emph{occurs in} $x$ or $x$ \emph{contains} $w$ if there are
natural numbers $i$, $j$ with $i\leq j$, such that $(x_{i},\ldots,x_{j})=w$.  A \emph{concatenation} of
two blocks $w_{1}=(a_{1},\ldots,a_{k})$ and $w_{2}=(b_{1},\ldots,b_{l})$ over $\mathbb{A}$ is the block
$w_{1}w_{2}=(a_{1},\ldots,a_{k},b_{1},\ldots,b_{l})$.

Now suppose that $f:\mathbb{N}\rightarrow X$ has a finite range. We can view $f$ as a sequence
$\{f(n)\}_{n=0}^{\infty}$ in $f(\mathbb{N})^{\mathbb{N}}$. Let $B_{m}(f)$ denote the set of all $m$-blocks occurring in $f$, i.e.,
$
B_{m}(f)=\{(f(n),f(n+1),\ldots,f(n+m-1)):n\geq 0\}.
$

\begin{lemma}\label{calculation formula of entropy}
Let $X$ be a compact Hausdorff space and $f$ a map from $\mathbb{N}$ to $X$ with finite range. Then the~anqie~entropy~of~$f$ equals $\lim_{m\rightarrow \infty}\frac{\log|B_{m}(f)|}{m}$.
\end{lemma}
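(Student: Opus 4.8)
The plan is to show that the anqie entropy $h(B_f)$, which by definition is the topological entropy of the Bernoulli shift $B$ restricted to $X_f$, coincides with the block-counting limit $\lim_{m\to\infty}\tfrac{1}{m}\log|B_m(f)|$. Since $f$ has finite range $\mathbb{A}=f(\mathbb{N})$, the space $X_f\subseteq \mathbb{A}^{\mathbb{N}}$ is a subshift over a finite alphabet, so the two standard equivalent formulations of topological entropy (via open covers as in Definition \ref{topological entropy}, and via counting admissible words) should match. First I would reduce the open-cover entropy to a concrete cover. For each symbol $a\in\mathbb{A}$ let $C_a=\{\omega\in X_f:\omega_0=a\}$; because $\mathbb{A}$ is finite and discrete, each $C_a$ is clopen, and $\mathcal{U}=\{C_a:a\in\mathbb{A}\}$ is a finite open cover of $X_f$ consisting of pairwise disjoint sets (a clopen partition).

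The key computation is to identify the refined cover $\bigvee_{j=0}^{m-1}B_f^{-j}\mathcal{U}$ with the set of $m$-blocks. An element of this join is a nonempty intersection $C_{a_0}\cap B_f^{-1}C_{a_1}\cap\cdots\cap B_f^{-(m-1)}C_{a_{m-1}}$, which consists exactly of those $\omega\in X_f$ whose first $m$ coordinates are $(a_0,\ldots,a_{m-1})$. This intersection is nonempty precisely when $(a_0,\ldots,a_{m-1})$ is the prefix of some point of $X_f$; since $\{\kappa_n=(f(n),f(n+1),\ldots):n\in\mathbb{N}\}$ is dense in $X_f$ and the coordinate maps are continuous onto the discrete set $\mathbb{A}$, such a prefix arises if and only if $(a_0,\ldots,a_{m-1})$ occurs in the sequence $\{f(n)\}$, i.e. lies in $B_m(f)$. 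Because these cells are mutually disjoint and cover $X_f$, the join is itself a partition into nonempty clopen sets, so no proper subcover exists and $\mathcal{N}\bigl(\bigvee_{j=0}^{m-1}B_f^{-j}\mathcal{U}\bigr)=|B_m(f)|$ exactly. Hence $h(B_f,\mathcal{U})=\lim_{m}\tfrac{1}{m}\log|B_m(f)|$.

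It then remains to argue that $\mathcal{U}$ is a maximizing cover, so that $h(B_f)=h(B_f,\mathcal{U})$. Here I would invoke the general fact that $\mathcal{U}$, being a clopen partition that generates the topology of the subshift under the shift (the cylinders $\bigvee_{j=0}^{m-1}B_f^{-j}\mathcal{U}$ form a neighborhood base as $m\to\infty$), is a refining sequence in the sense used in the proof of Theorem \ref{thm_semi_continuous}; by Property 12 of \cite{Adl-Kon-And65} one has $h(B_f)=\lim_m h(B_f,\bigvee_{j=0}^{m-1}B_f^{-j}\mathcal{U})$, and the latter equals $h(B_f,\mathcal{U})$ since for any cover $h(T,\bigvee_{j=0}^{m-1}T^{-j}\mathcal{U})=h(T,\mathcal{U})$. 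Combining the displays gives the claimed formula.

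The main obstacle I anticipate is the last step, namely verifying rigorously that $\mathcal{U}$ achieves the supremum over all open covers rather than merely giving a lower bound. The inequality $h(B_f,\mathcal{U})\le h(B_f)$ is immediate, but the reverse requires knowing that the partition into time-zero cylinders is a \emph{generator} for the subshift, so that refinements of $\mathcal{U}$ cofinally approximate every open cover; this uses the product topology on $X^{\mathbb{N}}$ and finiteness of the alphabet, and I would make it precise either by the refining-cover argument above or by directly showing any open cover $\mathcal{V}$ of the compact space $X_f$ is refined by some $\bigvee_{j=0}^{m-1}B_f^{-j}\mathcal{U}$ (Lebesgue-number / cylinder-basis argument), whence $h(B_f,\mathcal{V})\le h(B_f,\mathcal{U})$. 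Everything else is a clean bookkeeping identification of join-cells with admissible blocks.
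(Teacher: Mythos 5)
Your proposal is correct and follows essentially the same route as the paper: both use the time-zero clopen partition $\{C_a\}_{a\in\mathbb{A}}$, identify the nonempty cells of $\bigvee_{j=0}^{m-1}B_f^{-j}\mathcal{U}$ with the $m$-blocks occurring in $f$ (the paper phrases this as $|\Phi_m(f)|=|B_m(f)|$ for the coordinate projections), and invoke Property 12 of \cite{Adl-Kon-And65} on the refining sequence of covers to pass from $h(B_f,\mathcal{U})$ to $h(B_f)$. The only cosmetic difference is that you make explicit the auxiliary identity $h(T,\bigvee_{j=0}^{m-1}T^{-j}\mathcal{U})=h(T,\mathcal{U})$, which the paper uses implicitly in its computation of $h(B_f,\xi_n)$.
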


\begin{proof}
Let $X_{f}$ be the closure of the set $\{(f(n),f(n+1),\ldots,): n\in \mathbb{N}\}$ in $f(\mathbb{N})^{\mathbb{N}}$. Then by the definition of anqie entropy, we have that the anqie entropy of $f$ equals the topological entropy of $B_{f}$, the Bernoulli shift $B$ restricted to $X_{f}$. We assume that $f(\bN)=\{a_{1},\ldots,a_{k}\}$ for $k\geq 1$. Let $D_i=\{(x_0,x_1,\ldots)\in X_f: x_0=a_{i}\}$ for $1\leq i\leq k$. Denote by $\xi=\{D_1,\ldots,D_{k}\}$, an open cover of $X_{f}$. Define $\xi_{n}=\xi \vee  B_{f}^{-1} \xi \vee \cdot\cdot\cdot \vee B_{f}^{-n+1}\xi$.
Then $\eta=\{\xi_{n} : n\geq1\}$ is a refinement cover family of $X_{f}$. Thus $h(B_{f})=\lim_{n\rightarrow \infty}h(B_{f},\xi_n)$ (see \cite[Property 12]{Adl-Kon-And65}). Here the notation $h(B_{f},\xi_n)$ is introduced in Definition \ref{topological entropy}.
Let $\Phi_m(f)$ be the image of the projection map from $X_{f}$ onto its first $m$ coordinates. It is not hard to check that $h(B_{f},\xi_n)=\lim_{m\rightarrow \infty} \frac{\log
|\Phi_{m+n-1}(f)|}m=\lim_{m\rightarrow \infty} \frac{\log
|\Phi_{m}(f)|}m$. Note that the convergence of a sequence in $X_{f}$ is coordinate-wise. Then  $|\Phi_m(f)|=|B_{m}(f)|$ and the claim in this lemma holds.
\end{proof}

\begin{lemma}\label{regular blocks}
Let $X$ be a compact Hausdorff space and $f$ a map from $\mathbb{N}$ to $X$ with finite range. Denoted by $R_{m}(f)=
\{f(lm),f(lm+1),\ldots, f(lm+m-1):l\geq 0\}
$. Then the anqie entropy of $f$ equals
\begin{equation}\label{use regular block to compute the entreopy}
\lim_{m\rightarrow \infty}\frac{\log |R_{m}(f)|}{m}.
\end{equation}
\end{lemma}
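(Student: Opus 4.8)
The plan is to reduce everything to Lemma \ref{calculation formula of entropy}, which already identifies the anqie entropy $h$ of $f$ with $\lim_{m\to\infty}\frac{\log|B_m(f)|}{m}$, and then to compare the counts $|R_m(f)|$ of \emph{aligned} $m$-blocks (those starting at multiples of $m$) with the counts $|B_m(f)|$ of \emph{all} $m$-blocks. Since every aligned block is in particular a block, we have $R_m(f)\subseteq B_m(f)$ and hence $\frac{\log|R_m(f)|}{m}\le \frac{\log|B_m(f)|}{m}$ for every $m$; this already yields $\limsup_{m}\frac{\log|R_m(f)|}{m}\le h$ for free. The whole problem is therefore to produce the matching lower bound, i.e. to show that restricting to aligned positions does not lose exponential growth.

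For the lower bound I would fix $m$ and pass to the coarser scale $Nm$. The key combinatorial observation is that an arbitrary $Nm$-block $(f(n),\ldots,f(n+Nm-1))$, with $n=qm+s$ and $0\le s<m$, is a length-$Nm$ window, starting at offset $s$, inside the concatenation $\rho_q\rho_{q+1}\cdots\rho_{q+N}$ of the $N+1$ consecutive aligned $m$-blocks $\rho_i=(f(im),\ldots,f(im+m-1))$ that cover the positions $[qm,(q+N+1)m)$. Thus each $Nm$-block is determined by such a run of $N+1$ aligned blocks together with the offset $s$, and since the number of distinct such runs is at most $|R_m(f)|^{N+1}$ (each run being an $(N+1)$-tuple of elements of $R_m(f)$), we obtain the estimate $|B_{Nm}(f)|\le m\,|R_m(f)|^{N+1}$. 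Taking logarithms, dividing by $Nm$, and letting $N\to\infty$ with $m$ fixed gives $h=\lim_{N\to\infty}\frac{\log|B_{Nm}(f)|}{Nm}\le \frac{\log|R_m(f)|}{m}$, where I use that the subsequence $(Nm)_N$ of the convergent sequence $\frac{\log|B_k(f)|}{k}$ still tends to $h$.

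Combining the two bounds gives $h\le \frac{\log|R_m(f)|}{m}\le \frac{\log|B_m(f)|}{m}$ for every $m$, and since the right-hand side converges to $h$ by Lemma \ref{calculation formula of entropy}, the squeeze theorem forces $\lim_{m\to\infty}\frac{\log|R_m(f)|}{m}=h$, which is exactly the claim (in particular the limit exists). The main obstacle is the combinatorial estimate in the second step: a naive comparison of $m$-blocks with aligned $m$-blocks only shows $|B_m(f)|\le m\,|R_m(f)|^2$, which loses a factor of $2$ in the exponent and would give only $h\le 2\liminf_m\frac{\log|R_m(f)|}{m}$. The point of passing to scale $Nm$ and sending $N\to\infty$ is precisely to make both the offset factor $m$ and the single extra aligned block (the discrepancy between $N$ and $N+1$ aligned blocks) negligible after normalization, thereby removing the spurious factor.
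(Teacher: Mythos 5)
Your proof is correct and follows essentially the same route as the paper's: the inclusion $R_m(f)\subseteq B_m(f)$ gives the upper bound, and the covering of an arbitrary $km$-block by $k+1$ consecutive aligned $m$-blocks together with an offset in $\{0,\ldots,m-1\}$ gives exactly the paper's estimate $|B_{km}(f)|\leq m\,|R_m(f)|^{k+1}$ and hence the lower bound $\AE(f)\leq \frac{\log|R_m(f)|}{m}$ for each $m$. No gaps.
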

\begin{proof}
On one hand, it follows from $|R_{m}(f)|\leq |B_{m}(f)|$ that $\limsup_{m\rightarrow \infty}\frac{\log |R_{m}(f)|}{m}$ $\leq \AE(f)$.
On the other hand, for any given $m\geq 1$ and any $km$-block $w$ with $k\geq 1$ occurring in $f$,
there is a concatenation of certain $k+1$ successive $m$-blocks in $R_{m}(f)$ such that the concatenation
contains $w$. Thus $|B_{km}(f)|\leq m(|R_{m}(f)|)^{k+1}$, which implies
$\AE(f)=\lim_{k\rightarrow \infty}\frac{\log |B_{km}(f)|}{km}\leq \frac{\log |R_{m}(f)|}{m}.$
We then have $\AE(f)\leq \liminf_{m\rightarrow \infty}\frac{\log |R_{m}(f)|}{m}$. Hence $\AE(f)=\lim_{m\rightarrow \infty}\frac{\log |R_{m}(f)|}{m}$.
\end{proof}

Now we prove a stronger version of Theorem \ref{theorem_finite_appoximate}.
\begin{theorem} \label{implification theorem}
Suppose that $(X,d)$ is a compact metric space and $f$ a map from $\mathbb{N}$ to $X$ with anqie entropy $\lambda$ $(0\leq \lambda<+\infty)$. Then for any $N\geq 1$, there is a map $f_{N}$ from $\mathbb{N}$ to $f(\mathbb{N})$ with finite range such that the anqie entropy of $f_{N}$ is less than or equal to $\lambda$ and $\sup_{n}d(f_{N}(n),f(n))\leq \frac{1}{N}$.
\end{theorem}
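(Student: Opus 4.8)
The plan is to produce $f_N$ by quantizing the values of $f$ through a finite partition of $X$ into pieces of diameter $<1/N$, taking the symbols of $f_N$ among values already attained by $f$, and then to bound the anqie entropy of the resulting finite-range function by counting its blocks with Lemma~\ref{regular blocks}. Concretely, I fix $N$ and use the compactness of $X$ to cover $\overline{f(\bN)}$ by finitely many open sets of diameter $<1/N$; disjointifying them gives a finite Borel partition $\{P_1,\dots,P_k\}$ of $\overline{f(\bN)}$ with each $\mathrm{diam}\,P_j<1/N$, and for every $j$ with $P_j\cap f(\bN)\neq\emptyset$ I pick a representative $y_j\in P_j\cap f(\bN)$. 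Setting $f_N(n)=y_{j(n)}$, where $j(n)$ is the unique index with $f(n)\in P_{j(n)}$, yields a map $f_N\colon\bN\to f(\bN)$ of finite range with $\sup_n d(f_N(n),f(n))\le\max_j \mathrm{diam}\,P_j<1/N$. Thus every requirement except the entropy bound is immediate, and the whole difficulty is concentrated in showing $\AE(f_N)\le\lambda$.

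For the entropy, since $f_N$ has finite range Lemma~\ref{regular blocks} gives $\AE(f_N)=\lim_{m\to\infty}\frac1m\log|R_m(f_N)|$, and by construction $|R_m(f_N)|$ is precisely the number of distinct itineraries $(j(lm),\dots,j(lm+m-1))$, $l\ge0$, of the orbit of $f$ under the partition $\{P_j\}$. Consecutive regular blocks concatenate, so $|R_{km}(f_N)|\le|R_m(f_N)|^{k}$; letting $k\to\infty$ along multiples of a fixed scale shows $\AE(f_N)\le\frac1m\log|R_m(f_N)|$ for every $m$, hence $\AE(f_N)\le\liminf_{m\to\infty}\frac1m\log c_m$, where $c_m$ denotes the total number of $m$-itineraries. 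It therefore suffices to prove $\liminf_{m\to\infty}\frac1m\log c_m\le\lambda$. To connect $c_m$ with $h(B_f)=\lambda$ I would use the cover description of topological entropy from Definition~\ref{topological entropy}: choosing an open cover $\mathcal U=\{U_1,\dots,U_k\}$ subordinate to the partition (with $P_j\subseteq U_j$ and $\mathrm{diam}\,U_j<1/N$), every occurring itinerary forces a nonempty cylinder in $\bigvee_{0\le i<m}B_f^{-i}\mathcal U$ based on the first $m$ coordinates, so that $c_m$ can be compared with the refined-cover counts $\mathcal N\!\left(\bigvee_{0\le i<m}B_f^{-i}\mathcal U\right)$, whose growth rate is $h(B_f,\mathcal U)\le\lambda$; the quality of this comparison is exactly the delicate point.

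The hard part will be making this last comparison lose nothing, and the obstruction is the discontinuity of the quantization. The coding $f\mapsto f_N$ is not a continuous factor of $(X_f,B_f)$: where the orbit straddles a boundary $\partial P_j$ a single coordinate may be coded ambiguously, and a crude multiplicity bound only yields $c_m\le\mathcal N\!\left(\bigvee_{0\le i<m}B_f^{-i}\mathcal U\right)\cdot D^{m}$, i.e.\ $\AE(f_N)\le\lambda+\log D$, which is too weak. Some discontinuity is in fact unavoidable, since a continuous finite-valued factor of $X_f$ would be locally constant and hence trivial whenever $X_f$ is connected (as in the $S^1\times S^1$ example following Theorem~\ref{thm_dynamicalGN}). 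To force the clean inequality $\AE(f_N)\le\lambda$ I would choose the partition with entropy-negligible boundaries --- arranging $\bigcup_j\partial P_j$ to have small multiplicity and to be visited by each orbit segment only on a vanishing fraction of coordinates --- so that the number of boundary-induced ambiguities in a length-$m$ itinerary is $o(m)$ and the factor $D^{m}$ above is replaced by a subexponential correction. Establishing this subexponential boundary bound, and thereby $\liminf_m\frac1m\log c_m\le\lambda$, is where essentially all the work of the proof lies.
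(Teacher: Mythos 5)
There is a genuine gap, and you have located it yourself: everything up to the entropy bound is routine, and the one step you do not carry out --- comparing the itinerary count $c_m$ of a fixed partition with $\cN\bigl(\bigvee_{0\le i<m}B_{f}^{-i}\,\mathcal U\bigr)$ --- is precisely where the theorem lives. Distinct itineraries only single out distinct \emph{nonempty} cylinders of the full join cover, not elements of a minimal subcover, and the number of nonempty cylinders can exceed $\cN$ by a factor exponential in $m$; so the honest conclusion of your argument is $\AE(f_N)\le\lambda+\log D$, as you say. Your proposed repair --- a partition whose boundaries are hit on a vanishing fraction of coordinates by every orbit segment --- is a small-boundary-type property that is a substantial assertion in its own right, is not available for free for an arbitrary orbit closure in a compact metric space, and is left entirely unproved in the proposal. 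As written, the proof does not close.

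The paper avoids the boundary issue altogether by never using a partition and, crucially, by coding at a rapidly increasing sequence of time scales rather than coordinate by coordinate. It covers $X_f$ by cylinders $\cW_s$ over balls $U_i=B_d(x_i,1/N)$ with centers $x_i=f(m_i)$ in the range of $f$, and iterates: given a subcover $\cU^{(l)}$ of $\cW_{t_l}$, the identity $h(B_f^{t_l})=t_l\lambda$ yields an $s_l$ with $\cN\bigl(\bigvee_{j=0}^{s_l-1}(B_f^{t_l})^{-j}\cU^{(l)}\bigr)<e^{(t_l\lambda+2^{-l})s_l}$; one fixes a minimal subcover $\cV^{(l)}$ and codes each point $(f(jt_{l+1}),f(jt_{l+1}+1),\ldots)$, with $t_{l+1}=t_ls_l$, by the \emph{least-index} member of $\cV^{(l)}$ containing it, which determines all $t_{l+1}$ symbols of that super-block at once. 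The ambiguity you worried about is resolved by the least-index rule, and the overcounting is controlled not by boundary geometry but by fiat: the alphabet of $t_{l+1}$-blocks produced at level $l$ has cardinality at most $\cN(\cdot)<e^{(t_l\lambda+2^{-l})s_l}$. Splicing the level-$l$ codings into a single $g$ (using $g_l$ on $[t_l,t_{l+1})$) and applying Lemma~\ref{regular blocks} at the scales $t_{l+1}$ gives $\AE(g)\le\lim_{l}(t_l\lambda+2^{-l})s_l/t_{l+1}=\lambda$. The multi-scale minimal-subcover selection is the idea missing from your proposal; a single fixed-scale quantization, whether by partition or by cover, does not suffice.
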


\begin{proof}
Given $N\geq 1$, suppose that $\{U_{i}=B_{d}(x_{i},\frac{1}{N}):i=0,1,\ldots,k\}$ is an open cover of $\overline{f(\mathbb{N})}$,
where $B_{d}(x_{i},\frac{1}{N})=\{x\in X: d(x_{i},x)<\frac{1}{N}\}$. We may assume that $x_{i}=f(m_{i})$.
For simplicity, we use $X_{0}$ to denote $\overline{f(\mathbb{N})}$.
Suppose that $X_{f}$ is the closure of the set $\{(f(n),f(n+1),\ldots): n\in \mathbb{N}\}$ in $X_{0}^{\mathbb{N}}$.
Let $B_{f}$ be the Bernoulli shift on $X_{0}^{\mathbb{N}}$ restricted to $X_{f}$, given by $(\omega_{0},\omega_{1},\ldots)\mapsto (\omega_{1},\omega_{2},\ldots)$.
For $s\geq 1$, denote $\cW_s$ by
\[
\left\{U_{i_0}\times U_{i_1}\times \ldots \times U_{i_{s-1}}\times X_0^{\bN\setminus\{0,1,\ldots,s-1\}}:\, \{i_0,i_1,\ldots,i_{s-1}\}\in \{0,1,\ldots,k\}^{s}\right\},
\]
which is an open cover for $X_f$. Set $t_0=1$ and $\cU^{(0)}=\cW_1$.

We use iteration on $l$ for $l=0,1,2,\ldots$. At the beginning of the $l$-th step, we always assume that there is a natural number $t_l$ and an open cover $\cU^{(l)}$ of $X_f$ which is a subcover of $\cW_{t_l}$. From the definition of the anqie entropy of $f$, we have that the topological entropy of $B_{f}$, denoted by $h(B_{f})$, is equal to $\lambda$. So $h(B_{f}^{t_l})=t_l\lambda$. For each $s\geq 1$, write $\cU_s^{(l)}= \bigvee\nolimits_{j=0}^{s-1}(B_{f}^{t_l})^{-j}\mathcal{U}^{(l)}$. Then $\lim\limits_{s\rightarrow \infty}s^{-1}\log\cN(\cU_s^{(l)})\leq t_l\lambda$. Here, for an open cover $\mathcal{U}$ of $X_{f}$, recall that $\mathcal{N}(\mathcal{U})$ denotes the minimal number of open sets in $\mathcal{U}$ that cover $X_{f}$. So there is a sufficiently large natural number $s_l$ 
such that $s_{l}^{-1}\log\cN(\cU_{s_{l}}^{(l)})< t_l\lambda+2^{-l}$. Set $t_{l+1}=t_ls_l$. Then $\cU_{s_l}^{(l)}$ is a subcover of $\cW_{t_{l+1}}$. Now choose a subcover $\mathcal{V}^{(l)}=\{V_{1}^{(l)},V_{2}^{(l)},\ldots,V_{k_l}^{(l)}\}$ of $\cU_{s_l}^{(l)}$ which satisfies $|\cV^{(l)}|=\cN(\cU_{s_l}^{(l)})=k_{l}$.

Note that a point $(f(j),f(j+1),\ldots)$ $(j\in \bN)$ may lie in many open sets in $\cV^{(l)}$. We employ the following strategy to chose a particular one. Let $\pi_l:\, \{0,1,\ldots,k\}^{t_{l+1}} \rightarrow \cW_{t_{l+1}}$ be the bijection defined by
\[
\pi_l\left((i_{0},i_{1},\ldots,i_{t_{l+1}-1})\right)=U_{i_{0}}\times U_{i_{1}}\times \cdot\cdot\cdot\times U_{i_{t_{l+1}-1}}\times X_{0}^{\mathbb{N}\setminus \{0,1,\ldots, t_{l+1}-1\}}.
\]
Define
\[
i_{j}=\min\{1\leq i\leq k_l:\, \left(f(jt_{l+1}),\ldots,f((j+1)t_{l+1}-1), f((j+1)t_{l+1}),\ldots\right)\in V_i^{(l)}\}
\]
and $g_l$ to be the map from $\mathbb{N}$ to $\{0,1,\ldots,k\}$ by
\[
(g_l(jt_{l+1}),\ldots, g_l((j+1)t_{l+1}-1))=\pi_l^{-1}(V_{i_j}^{(l)})
\]
for each $j\geq 0$. Then we set $\cU^{(l+1)}= \cV^{(l)}$ and iterate on $l+1$.

Now, by the above construction $\pi_{l+1}^{-1}(\cV^{(l+1)})\subseteq \left(\pi_l^{-1}(\cV^{(l)})\right)^{s_{l+1}}$ for all $l\geq 0$. It follows that
for any $r\geq 0$, the sequence $(g_{r}(0),g_{r
}(1),\ldots)$ can be viewed as an infinite concatenation of at most $e^{(t_l\lambda+2^{-l})s_l}$ different $t_{l+1}$-blocks for each $l\leq r$.

Next we construct a map $g:\, \bN \rightarrow \{0,1,2,\ldots,k\}$ by $g(m)=g_0(m)$ for $0\leq m<t_1$ and $g(m)=g_l(m)$ for $t_l\leq m<t_{l+1}$ 
$(l\geq 1)$. It is not hard to verify that
\[
|\left\{\left(g(jt_{l+1}),\ldots, g((j+1)t_{l+1}-1)\right):\, j\geq 0\right\}| < e^{(t_l\lambda+2^{-l})s_l}+1
\]
for all $l\geq 0$. Here the term $+1$ counts the possibility that $(g(0),\ldots,g(t_{l+1}-1))$ does not belong to $\pi_l^{-1}(\cV^{(l)})$. It follows from Lemma \ref{regular blocks} that
\[
\AE(g)= \lim\limits_{l\rightarrow \infty}\frac{1}{t_{l+1}}\log |\left\{\left(g(jt_{l+1}),\ldots, g((j+1)t_{l+1}-1)\right):\, j\geq 0\right\}| \leq \lambda.
\]

Finally, define $f_{N}(n)=x_{g(n)}=f(m_{g(n)})$. From the above construction of $g$, we see $f(n)\in U_{g(n)}$. Note that $U_{g(n)}=B_{d}(x_{g(n)},\frac{1}{N})$. Then $d(f(n),f_{N}(n))=d(f(n),x_{g(n)})<\frac{1}{N}$. By Lemma \ref{calculation formula of entropy}, we conclude that the anqie entropy of $f_{N}$ is equal to $\AE(g)\leq \lambda$.
\end{proof}

Theorem \ref{theorem_finite_appoximate} is a consequence of the above theorem. Now we are ready to prove Theorem \ref{arithmetic compactification is totally disconnected}.

\begin{proof}[Proof of Theorem \ref{arithmetic compactification is totally disconnected}]
Applying Theorem \ref{implification theorem} and Proposition \ref{the equivalent condition of totally disconnected},
we obtain that the space $E_{0}(\mathbb{N})$ is totally disconnected. From Proposition \ref{thm_ari_compa_property}(\romannumeral1), we know that $E_{0}(\mathbb{N})$ is not extremely disconnected. The proof is completed.
\end{proof}

As an application of Theorem \ref{implification theorem}, we now prove Proposition \ref{approximation result of orbit}.
\begin{proof}[Proof of Proposition \ref{approximation result of orbit}]
Let $x$ be a given point in $X$ and $g$ be the map from $\mathbb{N}$ to $X$ defined by $g(n)=T^{n}x$.  Let $\mathcal{O}_x$ denote the orbit of $x$, i.e., $\mathcal{O}_x=\{T^{n}x: n=0,1,2,\ldots\}$. We use $\overline{\mathcal{O}}_x$ to
denote the closure of $\mathcal{O}_x$ in $X$. Then the topological entropy of $T$ restricted to $\overline{\mathcal{O}}_x$ is less than or equal to $\lambda$. By the definition of anqie entropy, we have that the anqie entropy of $g$ equals the topological entropy of $T$ restricted on $\overline{\mathcal{O}}_x$, which is less than or equal to $\lambda$. Then by Theorem \ref{implification theorem}, the claim in the proposition holds.
\end{proof}

At the end of this section, we prove the following result, which is crucial to prove Theorems \ref{thm_K0_group} and \ref{thm_K1_group} in the next section. The proof of this result is similar to that in Theorem \ref{implification theorem}.

\begin{lemma}\label{subset with zero entropy}
Suppose that $X$ is a compact Hausdorff space and $f$ a map from $\mathbb{N}$ to $X$ with anqie entropy
$\lambda$ $(0\leq \lambda<+\infty)$. Let $U_{1}$ be an open set in $X$, and $K\subseteq U_{1}$ be a closed
set in $X$. Then there is a set $C\subseteq \mathbb{N}$ with $f^{-1}(K)\subseteq C\subseteq f^{-1}(U_{1})$ such that
$\AE(\chi_{C})\leq \lambda$.
\end{lemma}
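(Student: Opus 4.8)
The plan is to adapt, essentially verbatim, the iterative block-coding construction from the proof of Theorem \ref{implification theorem}, but to replace the cover by small balls with the two-element open cover $\{W_0,W_1\}$ defined by $W_1=U_1$ and $W_0=X\setminus K$. Since $K\subseteq U_1$ we have $X\setminus U_1\subseteq X\setminus K$, so $W_0\cup W_1=X$; as $K$ is closed, $W_0$ is open, and thus $\{W_0,W_1\}$ is a genuine open cover of $X$. Note this uses no metric on $X$, which is precisely why the conclusion holds for an arbitrary compact Hausdorff space. I will put the alphabet $\{0,1\}$ in bijection with $\{W_0,W_1\}$, arrange that the output of the construction is a $\{0,1\}$-valued function $g$, and then set $C=g^{-1}(\{1\})$, so that $\chi_C=g$.

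First I would form, for each $s\ge 1$, the open cover $\cW_s$ of $X_f$ (the closure of $\{(f(n),f(n+1),\ldots):n\in\bN\}$ in $X^{\bN}$) consisting of the cylinders $W_{a_0}\times\cdots\times W_{a_{s-1}}\times X^{\bN\setminus\{0,\ldots,s-1\}}$ indexed by words $(a_0,\ldots,a_{s-1})\in\{0,1\}^{s}$. Then I would run the same iteration as in Theorem \ref{implification theorem}: at stage $l$ one has a subcover $\cU^{(l)}$ of $\cW_{t_l}$; using $h(B_f^{t_l})=t_l\lambda$ one chooses $s_l$ with $s_l^{-1}\log\cN(\cU_{s_l}^{(l)})<t_l\lambda+2^{-l}$; one sets $t_{l+1}=t_ls_l$, extracts a minimal subcover $\cV^{(l)}$ of $\cU_{s_l}^{(l)}\subseteq\cW_{t_{l+1}}$, and for each $j$ selects the least-indexed $V_{i_j}^{(l)}\in\cV^{(l)}$ containing the orbit point $(f(jt_{l+1}),f(jt_{l+1}+1),\ldots)$. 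Reading off the word $\pi_l^{-1}(V_{i_j}^{(l)})\in\{0,1\}^{t_{l+1}}$ produces a block map $g_l$, and the diagonal concatenation $g(m)=g_l(m)$ for $t_l\le m<t_{l+1}$ is an infinite concatenation of at most about $e^{(t_l\lambda+2^{-l})s_l}$ distinct $t_{l+1}$-blocks. By Lemma \ref{regular blocks} (together with Lemma \ref{calculation formula of entropy}) this yields $\AE(g)\le\lambda$, exactly as in the earlier proof.

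The genuinely new ingredient, and the step I expect to require the most care to state cleanly, is the forcing property that identifies $g$ as the characteristic function of a set squeezed between $f^{-1}(K)$ and $f^{-1}(U_1)$. If a point $\omega=(\omega_0,\omega_1,\ldots)$ lies in the cylinder with word $(a_0,\ldots,a_{s-1})$, then $\omega_r\in W_{a_r}$ for each $r$. Now if $\omega_r\in K$ then $\omega_r\in W_1$ but $\omega_r\notin W_0=X\setminus K$, which forces $a_r=1$; symmetrically, if $\omega_r\in X\setminus U_1$ then $\omega_r\notin W_1=U_1$, forcing $a_r=0$. Applying this to the selected cylinder $V_{i_j}^{(l)}$, which genuinely contains the corresponding orbit point, I conclude that $g(n)=1$ whenever $f(n)\in K$ and $g(n)=0$ whenever $f(n)\in X\setminus U_1$. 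The one technical point to verify is that this forcing survives the diagonal concatenation: for $m\in[t_l,t_{l+1})$ the value $g(m)=g_l(m)$ is read from the $j=0$ block of $g_l$, whose cylinder contains $(f(0),f(1),\ldots)$, so indeed $f(m)\in W_{g(m)}$ and the forcing applies at index $m$.

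Putting these together, with $C=g^{-1}(\{1\})$ I obtain $f^{-1}(K)\subseteq C$ and $C\cap f^{-1}(X\setminus U_1)=\emptyset$, i.e.\ $C\subseteq f^{-1}(U_1)$, while $\AE(\chi_C)=\AE(g)\le\lambda$, which is the assertion. The entropy estimate is essentially a transcription of Theorem \ref{implification theorem}, so the main obstacle is purely the combinatorial-topological forcing argument above; once the overlap structure of $W_0$ and $W_1$ is exploited correctly, everything else is routine bookkeeping inherited from the earlier construction.
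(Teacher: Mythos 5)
Your proposal is correct and follows essentially the same route as the paper: the paper's proof likewise sets $U_0=X\setminus K$, forms the two-letter cylinder covers of $X_f$, reruns the block-coding iteration of Theorem \ref{implification theorem} to get a $\{0,1\}$-valued $g$ with $\AE(g)\le\lambda$ and $f(n)\in U_{g(n)}$, and then uses exactly your forcing observation ($f(n)\in K\Rightarrow g(n)=1$, $f(n)\notin U_1\Rightarrow g(n)=0$) before taking $C=g^{-1}(\{1\})$. Your extra check that the forcing survives the diagonal concatenation is a detail the paper leaves implicit, but it is the right thing to verify.
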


\begin{proof}
Let $U_{0}=X\setminus K$, then $X= U_{0}\cup U_{1}$. Suppose that $X_{f}$ is the closure of the set $\{(f(n),f(n+1),\ldots): n\in \mathbb{N}\}$ in $X^{\mathbb{N}}$. For $s\geq 1$, let
\[\mathcal{W}_{s}=\{U_{i_{0}}\times U_{i_{1}}\times\cdot\cdot\cdot\times U_{i_{s-1}}\times X^{\mathbb{N}\setminus \{i_{0},i_{1},\ldots,i_{s-1}\}}:\{i_{0},i_{1},\ldots, i_{s-1}\}\in \{0,1\}^{s}\},\]
which is an open cover for $X_{f}$. Then similar to the proof of Theorem \ref{implification theorem}, we can construct an arithmetic function $g: \mathbb{N}\rightarrow \{0,1\}^{\mathbb{N}}$ such that $\AE(g)\leq \lambda$ and $f(n)\in U_{g(n)}$. Moreover, for $n\in f^{-1}(K)$, we have $f(n)\in U_{1}\setminus U_{0}$ and then $g(n)=1$. For $n\notin f^{-1}(U_{1})$, we have $f(n)\in U_{0}\setminus U_{1}$, then $g(n)=0$. The lemma follows by taking $C=\{n: g(n)=1\}$.
\end{proof}

\section{The $K$-groups of $\mathcal{E}_{0}(\mathbb{N})$}\label{kgroups}

Recall that $\mathcal{E}_{0}(\mathbb{N})$ is the C*-algebra of all arithmetic functions with zero anqie entropy.
It is *-isomorphic to $C(E_{0}(\mathbb{N}))$. In this section, we shall prove that $K_{0}(\mathcal{E}_{0}(\mathbb{N}))\cong \{f\in \mathcal{E}_{0}(\mathbb{N}): f(\mathbb{N})\subseteq \mathbb{Z}\}$ (Theorem \ref{thm_K0_group}) and $K_{1}(\mathcal{E}_{0}(\mathbb{N}))=0$ (Theorem \ref{thm_K1_group}).
Lemma \ref{subset with zero entropy} is an essential tool to prove the above results. The following proposition follows from Lemma \ref{calculation formula of entropy} that will be used in our proof.

\begin{proposition}\label{01 valued function}
Let $f\in \mathcal{F}_{0}(\mathbb{N})$, i.e., $f$ has zero anqie entropy and finite range. For any $c\in f(\mathbb{N})$, let $f_{c}\in l^{\infty}(\mathbb{N})$ be the characteristic function defined on $f^{-1}(\{c\})$. Then $\AE(f_{c})=0$. In particular, any function in $\mathcal{F}_{0}(\mathbb{N})$ is a linear combination of $\{0,1\}$-valued functions with zero anqie entropy.
\end{proposition}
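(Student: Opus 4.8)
The plan is to exploit Lemma \ref{calculation formula of entropy}, which expresses the anqie entropy of any finite-range map through the asymptotic growth rate of its block counts. First I would observe that $f_c$ factors through $f$: defining $\phi: f(\mathbb{N}) \to \{0,1\}$ by $\phi(c)=1$ and $\phi(x)=0$ for $x\neq c$, one has $f_c=\phi\circ f$. Consequently the $m$-block $(f_c(n),\ldots,f_c(n+m-1))$ is obtained from the $m$-block $(f(n),\ldots,f(n+m-1))$ by applying $\phi$ coordinate-wise. This coordinate-wise action induces a surjection $B_m(f)\to B_m(f_c)$, and hence $|B_m(f_c)|\leq |B_m(f)|$ for every $m\geq 1$.

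Combining this inequality with Lemma \ref{calculation formula of entropy} and the nonnegativity of $\AE$, I would conclude
\[
0\leq \AE(f_c)=\lim_{m\rightarrow\infty}\frac{\log|B_m(f_c)|}{m}\leq \lim_{m\rightarrow\infty}\frac{\log|B_m(f)|}{m}=\AE(f)=0,
\]
where the final equality is the hypothesis $f\in\mathcal{F}_0(\mathbb{N})$. Thus $\AE(f_c)=0$. (An alternative route avoids block counts entirely: since $f$ has finite range, Lagrange interpolation furnishes a polynomial $p$ with $p(c)=1$ and $p(x)=0$ at the remaining values of $f$, so that $f_c=p(f)$, and Lemma \ref{basic property}(\romannumeral2) gives $\AE(f_c)\leq \AE(f)=0$ at once. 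Either argument suffices, but I would present the block-counting one to stay aligned with the cited lemma.)

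For the final assertion, I would write $f=\sum_{c\in f(\mathbb{N})} c\,f_c$, a finite sum because $f(\mathbb{N})$ is finite. Each summand $f_c$ is a $\{0,1\}$-valued function with zero anqie entropy by the first part, exhibiting $f$ as the desired linear combination.

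I do not anticipate any genuine obstacle here; the single point deserving care is confirming that the coordinate-wise application of $\phi$ yields a well-defined surjection $B_m(f)\to B_m(f_c)$, which is immediate once one records the identity $f_c=\phi\circ f$.
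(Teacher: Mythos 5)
Your proposal is correct and follows the same route the paper intends: the paper gives no written proof beyond asserting that the proposition ``follows from Lemma \ref{calculation formula of entropy},'' and your block-counting argument (via the surjection $B_m(f)\to B_m(f_c)$ induced by $f_c=\phi\circ f$) is exactly the missing detail. The Lagrange-interpolation alternative via Lemma \ref{basic property}(\romannumeral2) is also valid, but the primary argument you present is the one the paper's citation points to.
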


We first list some notation and recall the definition of $K_{0}$-group of unital C*-algebras. Let $\mathcal{A}$ be a unital C*-algebra. Denote $M_{k,l}(\mathcal{A})$ as the set of all $k\times l$ matrices with entries in $\mathcal{A}$.
In particular, $M_{k,k}(\mathcal{A})$, also denoted by $M_{k}(\mathcal{A})$, is a C*-algebra.
For $a_1,\ldots,a_k\in \cA$, the expression $\diag(a_1,a_2,\ldots,a_k)$ stands for the $k\times k$ diagonal matrix with diagonal elements $a_1,a_2,\ldots,a_k$ in order.
Denote by $\mathcal{P}(\mathcal{A})$ the set of all projections in $\mathcal{A}$, i.e.,
\[
\mathcal{P}(\mathcal{A})=\{p\in \mathcal{A}: p^2=p^{*}=p\}.
\]
Set
\[
\mathcal{P}_{k}(\mathcal{A})=\mathcal{P}(M_{k}(\mathcal{A}))~{\rm and}~\mathcal{P}_{\infty}(\mathcal{A})=\cup_{k=1}^{\infty}\mathcal{P}_{k}(\mathcal{A}).
\]
Here we view $\mathcal{P}_{k}(\mathcal{A})$, $k=1,2,\ldots$, as being pairwise disjoint.
For any $p\in \mathcal{P}_{k}(\mathcal{A})$ and $q\in \mathcal{P}_{l}(\mathcal{A})$, we say $p\sim_{0}q$ if and only if there is a $u\in M_{l,k}(\mathcal{A})$, such that
$
p=u^{*}u, q=uu^{*}.
$
It is known that $\sim_{0}$ is an equivalence relation on $\mathcal{P}_{\infty}(\mathcal{A})$. Define
$\mathcal{D}(\mathcal{A})=\mathcal{P}_{\infty}(\mathcal{A})/\sim_{0}$, and let $[p]_{\mathcal{D}}\in \mathcal{D}(\mathcal{A})$ be the equivalence class of $p\in \mathcal{P}_{\infty}(\mathcal{A})$. The formula
\[
[p]_{\mathcal{D}}+[q]_{\mathcal{D}}:=[\diag(p,q)]_{\mathcal{D}}
\]
gives a binary operation on $\mathcal{D}(\mathcal{A})$ such that $(\mathcal{D}(\mathcal{A}),+)$ forms an abelian semigroup. Recall that the $K_0$-group $K_{0}(\mathcal{A})$ is defined to be the Grothendieck group of the semigroup $\mathcal{D}(\mathcal{A})$. More specifically, define an equivalence relation $\sim$ on $\cD(\cA)\times \cD(\cA)$ by $([p_{1}]_{\mathcal{D}}, [q_{1}]_{\mathcal{D}})\sim ([p_{2}]_{\mathcal{D}},[q_{2}]_{\mathcal{D}})$ if there is some $[p]_{\mathcal{D}}\in \mathcal{D}(\mathcal{A})$ such that
\[
[p_{1}]_{\mathcal{D}}+[q_{2}]_{\mathcal{D}}+[p]_{\mathcal{D}}=[q_{1}]_{\mathcal{D}}+[p_{2}]_{\mathcal{D}}+[p]_{\mathcal{D}}.
\]
Then $K_{0}(\mathcal{A})=(\mathcal{D}(\mathcal{A})\times \mathcal{D}(\mathcal{A}))/\sim$.

Now, let us return to the computation of $K_{0}(\mathcal{E}_{0}(\mathbb{N}))$. Recall that $\mathcal{E}_{0}(\mathbb{N})\cong C(E_{0}(\mathbb{N}))$ and $E_{0}(\mathbb{N})$ is totally disconnected by Theorem \ref{arithmetic compactification is totally disconnected}. We remark that Exercise 3.4 in \cite{RLL} gives a general result for the $K_{0}$-group of $C(X)$, where $X$ is a totally disconnected space. Here for the specific object $E_{0}(\mathbb{N})$, we present a different method to compute the $K_{0}$ group, from which we obtain a result (Lemma \ref{diagonalization}) that can not be deduced from the general $K$-theory.

For simplicity, we use $\mathcal{E}_{0}$ to denote $\mathcal{E}_{0}(\mathbb{N})$ in this section. We first prove that each projection in $\mathcal{P}_{k}(\mathcal{E}_{0})$ can be ``diagonalized".
\begin{lemma}\label{diagonalization}
For any $k\geq 1$ and any $P\in \mathcal{P}_{k}(\mathcal{E}_{0})$, there is a diagonal matrix $Q\in \mathcal{P}_{k}(\mathcal{E}_{0})$, such that $P\sim_{0}Q$.
\end{lemma}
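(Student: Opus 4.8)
The plan is to realize $P$ as a map from $\mathbb{N}$ into the compact space of matrix projections and to transport it fibrewise onto a standard diagonal projection by a partial isometry whose matrix entries still have zero anqie entropy. Since $P^2=P=P^*$ in $M_k(\mathcal{E}_0)$, evaluating each entry at $n\in\mathbb{N}$ shows that $P(n):=(P_{ij}(n))$ is a genuine projection in $M_k(\mathbb{C})$, so $P$ defines a map $P\colon\mathbb{N}\to\mathcal{P}(M_k(\mathbb{C}))$ into the compact Hausdorff set $\mathcal{P}(M_k(\mathbb{C}))$ of projections. Because every entry $P_{ij}$ lies in $\mathcal{E}_0$, Theorem \ref{thm_entropy_additive} shows the anqie generated by the $P_{ij}$ has zero entropy, and hence, identifying $M_k(\mathbb{C})\cong\mathbb{C}^{k^2}$ and invoking Theorem \ref{thm_dynamicalGN}, the map $P$ has anqie entropy $0$. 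This is exactly the hypothesis needed to feed $P$ into Lemma \ref{subset with zero entropy}.

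First I would fix the local model. For projections $p,e\in\mathcal{P}(M_k(\mathbb{C}))$ with $\|p-e\|<1$ one has $\mathrm{rank}\,p=\mathrm{rank}\,e$, and the standard partial isometry $v(p)=ep(pep)^{-1/2}$ (inverse square root taken in the corner $pM_k(\mathbb{C})p$) depends continuously on $p$ and satisfies $v(p)^*v(p)=p$, $v(p)v(p)^*=e$. Conjugating by a fixed unitary carrying $e$ to the standard diagonal projection $d_r=\diag(1,\ldots,1,0,\ldots,0)$ of rank $r=\mathrm{rank}\,e$, I obtain a continuous $\tilde v\colon\{p:\|p-e\|<1\}\to M_k(\mathbb{C})$ with $\tilde v(p)^*\tilde v(p)=p$ and $\tilde v(p)\tilde v(p)^*=d_r$. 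Covering the compact set $\overline{P(\mathbb{N})}$ by finitely many such balls $\mathcal{O}_1,\ldots,\mathcal{O}_m$ centred at projections $e_1,\ldots,e_m$ produces continuous trivialising maps $\tilde v_1,\ldots,\tilde v_m$ onto fixed diagonal projections $d_{r_1},\ldots,d_{r_m}$.

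The heart of the argument is to convert this open cover into a zero-entropy clopen partition of $\mathbb{N}$. Using the shrinking lemma I would pick open covers $\{H_j\}$ and $\{G_j\}$ of $\overline{P(\mathbb{N})}$ with $\overline{H_j}\subseteq G_j$ and $\overline{G_j}\subseteq\mathcal{O}_j$, and apply Lemma \ref{subset with zero entropy} (with $\lambda=0$) to each pair $\overline{H_j}\subseteq G_j$ to get $C_j'\subseteq\mathbb{N}$ with $P^{-1}(\overline{H_j})\subseteq C_j'\subseteq P^{-1}(G_j)$ and $\chi_{C_j'}\in\mathcal{E}_0$. Since $\{H_j\}$ covers $\overline{P(\mathbb{N})}$ we get $\bigcup_jC_j'=\mathbb{N}$, and disjointifying inside the algebra $\mathcal{E}_0$ (set $C_1=C_1'$ and $C_j=C_j'\setminus\bigcup_{i<j}C_i'$, so $\chi_{C_j}=\chi_{C_j'}\prod_{i<j}(1-\chi_{C_i'})\in\mathcal{E}_0$) yields a partition $\mathbb{N}=\bigsqcup_jC_j$ with $P(C_j)\subseteq\overline{G_j}\subseteq\mathcal{O}_j$. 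I then define $U\colon\mathbb{N}\to M_k(\mathbb{C})$ by $U(n)=\tilde v_j(P(n))$ for $n\in C_j$, and write the entries as $U_{ab}=\sum_j\chi_{C_j}\,(\bar\psi_{j,ab}\circ P)$, where $\bar\psi_{j,ab}$ is a Tietze extension to all of $\mathcal{P}(M_k(\mathbb{C}))$ of the $(a,b)$-entry of $\tilde v_j|_{\overline{G_j}}$. Each summand is a product of a member of $\mathcal{E}_0$ with a continuous function of the zero-entropy map $P$, the latter again lying in $\mathcal{E}_0$ by the continuous-function strengthening of Lemma \ref{basic property}; hence $U\in M_k(\mathcal{E}_0)$. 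The inclusion $P(C_j)\subseteq\overline{G_j}$ is precisely what forces $\bar\psi_{j,ab}\circ P=\psi_{j,ab}\circ P$ on $C_j$, so $U$ really equals $\tilde v_j\circ P$ there.

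Finally I would read off the conclusion pointwise: for $n\in C_j$ one has $U(n)^*U(n)=P(n)$ and $U(n)U(n)^*=d_{r_j}$, whence $U^*U=P$ and $Q:=UU^*=\diag(\chi_{R_1},\ldots,\chi_{R_k})$ with $R_i=\{n:\mathrm{rank}\,P(n)\ge i\}$. As $Q=UU^*\in M_k(\mathcal{E}_0)$, its diagonal entries automatically lie in $\mathcal{E}_0$, so $Q$ is a diagonal projection in $\mathcal{P}_k(\mathcal{E}_0)$, and $P=U^*U\sim_0 UU^*=Q$ by the definition of $\sim_0$. The main obstacle I anticipate is not the fibrewise linear algebra but the bookkeeping that keeps everything inside $\mathcal{E}_0$: the Grassmannian pieces of $\mathcal{P}(M_k(\mathbb{C}))$ are connected, so there is no global continuous diagonalising trivialisation, and it is the total disconnectedness of $E_0(\mathbb{N})$ encoded in Lemma \ref{subset with zero entropy} (cf.\ Theorem \ref{arithmetic compactification is totally disconnected}) that allows one to patch the local models, with the nested shrinking arranged so that the Tietze extensions coincide with the genuine trivialisations on each patch.
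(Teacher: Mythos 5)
Your proof is correct, but it takes a genuinely different route from the paper's. The paper first reduces to projections of constant trace $r$ via the zero-entropy rank function, and then runs a double induction on $k$: Lemma \ref{subset with zero entropy} is applied to the diagonal entries $p_{11},\,p_{22}\prod(1-\chi_{C_j}),\dots$ to carve $\bN$ into zero-entropy pieces on which some $p_{ii}$ is bounded below by $r/2k$; on each piece the $i$-th column is normalized into a rank-one partial isometry $V_i$, subtracted off, and the inductive hypothesis is applied to the remaining rank-$(r-1)$ projection living in a $(k-1)\times(k-1)$ corner. You instead treat $P$ as a single zero-entropy map into the compact space $\mathcal{P}(M_k(\bC))$ and exploit local triviality of the tautological projection: finitely many norm balls of radius $<1$ carry canonical continuous trivializations $p\mapsto ep(pep)^{-1/2}$ onto fixed diagonal projections, and Lemma \ref{subset with zero entropy} (applied to a shrinking of the pulled-back cover, then disjointified inside $\cE_0$) converts the cover into a zero-entropy partition of $\bN$ over which the local partial isometries can be patched. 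Both arguments ultimately rest on Lemma \ref{subset with zero entropy}; yours avoids the induction entirely and even lands directly on the normal form $\diag(\chi_{R_1},\dots,\chi_{R_k})$ with $R_1\supseteq\cdots\supseteq R_k$, which anticipates part of Lemma \ref{simply form}. The price is that you must invoke the vector-valued notion of anqie entropy together with Theorem \ref{thm_dynamicalGN} to see that $P$ itself has entropy zero, and the continuous-functional-calculus strengthening of Lemma \ref{basic property} (stated in the paper without detailed proof) to keep the Tietze-extended entries $\bar\psi_{j,ab}\circ P$, and hence $U$, inside $M_k(\cE_0)$; your care in arranging $P(C_j)\subseteq\overline{G_j}$ so that the extension agrees with the genuine trivialization on each piece is exactly the point that makes the patching legitimate.
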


\begin{proof}
For any $k\geq 1$ and $r$ with $0\leq r\leq k$, define
$$\ba{lll}
\mathcal{R}_{k}(r)=\{(p_{ij})_{1\leq i,j\leq k}\in \mathcal{P}_{k}(\mathcal{E}_{0}): \sum_{i=1}^{k}p_{ii}(n)=r~{\rm for~any}~n\in \mathbb{N}\}.
\ea$$

\textbf{Claim:} For any $P\in \mathcal{R}_{k}(r)$, there is a diagonal matrix $Q\in\mathcal{R}_{k}(r)$ such that $P\sim_{0} Q$.

First we show how to prove this lemma if the claim holds. Let $P=(p_{ij})_{1\leq i,j\leq k}\in \mathcal{P}_{k}(\mathcal{E}_{0})$. Define
$$
r(n)=p_{11}(n)+p_{22}(n)+\cdot\cdot\cdot+p_{kk}(n)={\rm rank}(P(n)),
$$
where $P(n)=(p_{ij}(n))_{1\leq i,j\leq k}$. Then by Corollary \ref{cor_AE(f+g)_leq_AEf+AEg} we have $r(n)\in \mathcal{E}_{0}$. Note that $r(n)$ has a finite range.
For any $i=0,1,\ldots,k$, set $R_{i}=\{n\in \mathbb{N}: r(n)=i\}$. Then $\AE(\chi_{R_{i}})=0$ by Proposition \ref{01 valued function}.
Let $E_{i}=\diag(1_{i},0_{k-i})\in M_{k}(\mathbb{C})$. Consider the projections
$$
\widetilde{P_{i}}=\chi_{R_{i}}P+(1-\chi_{R_{i}})E_{i},~i=0,1,\ldots,k.
$$
Then $\widetilde{P_{i}}\in \mathcal{R}_{k}(i)$. From the assumption of the claim, there is a $V_{i}\in M_{k,k}(\mathcal{E}_{0})$,
such that $V_{i}V_{i}^{*}=\widetilde{P_{i}}$ and $V_{i}^{*}V_{i}$ is a diagonal matrix in $\mathcal{R}_{k}(i)$.
Note that $P=\sum_{i=0}^{k}\chi_{R_{i}}P=\sum_{i=0}^{k}\chi_{R_{i}}\widetilde{P_{i}}$. Choose $U=\sum_{i=0}^{k}\chi_{R_{i}}V_{i}$.
Then $UU^{*}=P$ and $U^{*}U$ is a diagonal matrix in $\mathcal{P}_{k}(\mathcal{E}_{0})$.

Next, we prove the correctness of the claim. We use induction on $k$ with $k=1,2,\ldots$.
For $k=1$, the proof is trivial. Assume inductively that the claim holds for some $k-1$ and any $r$ with $0\leq r\leq k-1$, where $k\geq 2$. In the following, we show that the claim holds for $\mathcal{R}_{k}(r)$ with $0\leq r\leq k$.

When $r=0$, the proof is trivial. In the following, we assume that $r\geq 1$. Suppose that $P=(p_{ij})_{1\leq i,j\leq k}\in \mathcal{R}_{k}(r)$.
Since $P$ is a projection, it follows from $P^*P=P$ that $0\leq p_{ii}\leq 1~(1\leq i\leq k)$. Let $p_{1}=p_{11}$. By Lemma \ref{subset with zero entropy},
there is a $C_{1}\subseteq \mathbb{N}$ with $\AE(\chi_{C_{1}})=0$,
such that
$$
p_{11}^{-1}([{r}/{k},1])\subseteq C_{1}\subseteq p_{11}^{-1}(({r}/{2k},1]).
$$
In the following, we use iteration on $l$ to construct a function $p_l$ and a set $C_l$ $(2\leq l\leq k)$. Let $p_l=p_{ll}\cdot\prod\nolimits_{j=1}^{l-1} (1-\chi_{C_j})$. Equivalently,
\[
p_l(n) =
\begin{cases}
0,\quad &\text{if }n\in C_1\cup \ldots \cup C_{l-1},\\
p_{ll}(n),\quad &\text{if }n\notin C_1\cup \ldots \cup C_{l-1}.
\end{cases}
\]
Note that $\AE(p_l)=0$. Applying Lemma \ref{subset with zero entropy} again, one obtains a set $C_l\subseteq \mathbb{N}$ with $\AE(\chi_{C_l})=0$ such that
\[
p_l^{-1}([r/k,1])\subseteq C_l\subseteq p_l^{-1}((r/2k,1]).
\]
Since $p_l(n)>r/2k>0$ for $n\in C_l$ $(2\leq l\leq k)$, one sees that the sets $C_1,\ldots,C_k$ are pairwise disjoint. Moreover, we have $\cup_{i=1}^{k}C_{i}=\mathbb{N}$. Actually, if $\cup_{i=1}^{k}C_{i}\neq \mathbb{N}$,
then choose $n\in \mathbb{N}\setminus \cup_{i=1}^{k}C_{i}$. Since $p_{i}(n)=p_{ii}(n)<\frac{r}{k}$ $(1\leq i\leq k)$,
this contradicts the fact that $\sum_{i=1}^{k}p_{ii}(n)=r$. Thus $\cup_{i=1}^{k}C_{i}=\mathbb{N}$ and $P=\sum_{i=1}^{k}\chi_{C_{i}}P$.

As matrices in $M_{k}(\mathcal{E}_{0})$, let $F_{1}=\cdot\cdot\cdot=F_{r}=$ diag$(1_{r},0_{k-r})$ and $F_{i}=$ diag$(0_{i-r},1_{r},$ $0_{k-i})$ for $r+1\leq i\leq k$.
Let $P_{i}=\chi_{C_{i}}P+(1-\chi_{{C}_{i}})F_{i}$ for $1\leq i\leq k$. Then $P=\sum_{i=1}^{k}\chi_{C_{i}}P_{i}$.
Suppose that $P_{i}=(f_{hl}^{i})_{1\leq h,l\leq k}\in \mathcal{R}_{k}(r)$. It is not hard to check that $f_{ii}^{i}(n)>{r}/{2k}$ for all $n$ by the construction of $C_{i}$.
Let $V_{i}=(v_{hl}^{i})_{1\leq h,l\leq k}$ be given by $v_{hl}^{i}=f_{hi}^{i}/\sqrt{f_{ii}^{i}}$ for $l=i$ and $v_{hl}^{i}=0$ otherwise.
By Lemma \ref{quotient of zero entropy functions}, we have $V_{i}\in M_{k}(\mathcal{E}_{0})$.

By the relations $P_{i}^*P_{i}=P_{i}$ and $P_{i}^{*}=P_{i}$, we obtain that $\sum_{h=1}^{k}|f_{hl}^i|^2=f_{ll}^i$, $1\leq l\leq k$.
Now a simple calculation leads to $V_{i}^{*}V_{i}=$diag$(0_{i-1},1,0_{k-i})$, which belongs to $\mathcal{R}_{k}(1)$, and $V_{i}V_{i}^{*}\in \mathcal{R}_{k}(1)$ as well. Since Range$(V_{i}V_i^*)\subseteq$ Range$(V_{i})\subseteq $ Range$(P_{i})$, we have $P_{i}-V_{i}V_{i}^{*}\in \mathcal{R}_{k}(r-1)$.
It is not hard to check that every element in the $i$-th row and the $i$-th column of $P_{i}-V_{i}V_{i}^{*}$ is zero.
By inductive hypothesis for $k-1$ case, there is a $U_{i}\in M_{k}(\mathcal{E}_{0})$
with every element in the $i$-th row and the $i$-th column of $U_{i}$ being zero, such that $U_{i}U_{i}^{*}=P_{i}-V_{i}V_{i}^{*}$
and $U_{i}^{*}U_{i}$ is a diagonal matrix $Q_{i}$ in $\mathcal{P}_{k}(\mathcal{E}_{0})$.

Note that $U_{i}V_{i}^*V_{i}U_{i}^*=U_{i}$diag$(0_{i-1},1,0_{k-i})U_{i}^*=0$. So $U_{i}V_{i}^*=0$. By the fact that $V_{i}^*U_{i}U_{i}^*V_{i}=V_{i}^*(P_{i}-V_{i}V_{i}^{*})V_{i}=0$,  we obtain $U_{i}^{*}V_{i}=0$.
Finally, set $U=\sum_{i=1}^{k}\chi_{C_{i}}(U_{i}+V_{i})$. Then $UU^{*}=P$ and $U^{*}U$ is a diagonal matrix $Q$ in $\mathcal{P}_{k}(\mathcal{E}_{0})$.
\end{proof}

When a diagonal matrix diag$(f_{1},\ldots,f_{k})$ in $M_{k}(\mathcal{E}_{0})$ is a projection, it satisfies that $f_{l}^2=f_{l}=\overline{f_{l}}$ for all $1\leq l\leq k$. So $f_{1},\ldots,f_{k}$ all take values in $\{0,1\}$ and they are characteristic functions.

\begin{lemma}\label{simply form}
Let $k\geq 1$. Suppose that \diag$(f_{1},\ldots,f_{k})\in \mathcal{P}_{k}(\mathcal{E}_{0})$. Then there are  characteristic functions
$g_{1},g_{2},\ldots,g_{k}\in \mathcal{E}_{0}(\mathbb{N})$ with $g_{1}\geq g_{2}\geq \cdots \geq g_{k}$, such that
\diag$(g_{1},\ldots,g_{k})\sim_{0}$ \diag$(f_{1},\ldots,f_{k})$. Moreover $\sum_{i=1}^{k}f_{i}(n)=\sum_{i=1}^{k}g_{i}(n)$ for all $n\in \mathbb{N}$.
\end{lemma}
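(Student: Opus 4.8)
The plan is to obtain the $g_i$ by ``pushing all the ones to the top'' pointwise, and then to exhibit an explicit locally constant unitary in $M_k(\mathcal{E}_0)$ conjugating $\diag(f_1,\ldots,f_k)$ to $\diag(g_1,\ldots,g_k)$. Since $\diag(f_1,\ldots,f_k)$ is a projection, each $f_i$ is $\{0,1\}$-valued, say $f_i=\chi_{A_i}$ with $\AE(\chi_{A_i})=0$. First I would set $r=\sum_{i=1}^k f_i$; by Corollary~\ref{cor_AE(f+g)_leq_AEf+AEg} it lies in $\mathcal{E}_0$, and it takes values in $\{0,1,\ldots,k\}$, so $r\in\mathcal{F}_{0}(\mathbb{N})$. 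Define $g_i=\chi_{\{n:\,r(n)\ge i\}}$ for $1\le i\le k$. Each $g_i$ is a characteristic function; the chain $g_1\ge g_2\ge\cdots\ge g_k$ is immediate from $\{r\ge1\}\supseteq\{r\ge2\}\supseteq\cdots$; and $\sum_i g_i(n)=|\{i:r(n)\ge i\}|=r(n)=\sum_i f_i(n)$, which is the ``moreover'' clause. That $g_i\in\mathcal{E}_0$ follows from Proposition~\ref{01 valued function} applied to $r$, since $g_i=\sum_{c\ge i}\chi_{r^{-1}(\{c\})}$ is a finite sum of zero-entropy characteristic functions.

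The substantive part is to realise $\diag(f_1,\ldots,f_k)\sim_0\diag(g_1,\ldots,g_k)$. I would partition $\mathbb{N}=\bigsqcup_{\epsilon\in\{0,1\}^k}P_\epsilon$, where $P_\epsilon=\{n:\,(f_1(n),\ldots,f_k(n))=\epsilon\}$. Writing $\chi_{P_\epsilon}=\prod_{i=1}^k h_i$ with $h_i=f_i$ when $\epsilon_i=1$ and $h_i=1-f_i$ when $\epsilon_i=0$, and using that $\mathcal{E}_0$ is an algebra (Theorem~\ref{thm_C_0_subCalg}), each $\chi_{P_\epsilon}$ lies in $\mathcal{E}_0$. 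For every $\epsilon$ I would fix a permutation matrix $\Pi_\epsilon\in M_k(\mathbb{C})$ carrying the positions of the ones of $\epsilon$ to the first $|\epsilon|$ coordinates, so that $\Pi_\epsilon\,\diag(\epsilon)\,\Pi_\epsilon^*=\diag(\epsilon')$, where $\epsilon'$ is the pattern of $|\epsilon|$ ones followed by $k-|\epsilon|$ zeros; note that $\diag(\epsilon')$ is precisely the value of $\diag(g_1,\ldots,g_k)$ on $P_\epsilon$. Then I set $u=\sum_\epsilon\chi_{P_\epsilon}\Pi_\epsilon$.

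It remains to check that $u$ is a unitary in $M_k(\mathcal{E}_0)$ implementing the desired conjugation. Its entries $u_{ij}=\sum_{\epsilon:(\Pi_\epsilon)_{ij}=1}\chi_{P_\epsilon}$ are characteristic functions of finite unions of the $P_\epsilon$, hence in $\mathcal{E}_0$; since the $\chi_{P_\epsilon}$ are mutually orthogonal with sum $1$ and each $\Pi_\epsilon$ is unitary, $u^*u=uu^*=\sum_\epsilon\chi_{P_\epsilon}\,1_k=1_k$; and because $\diag(f_1,\ldots,f_k)=\diag(\epsilon)$ on $P_\epsilon$,
\[
u\,\diag(f_1,\ldots,f_k)\,u^*=\sum_\epsilon\chi_{P_\epsilon}\,\Pi_\epsilon\,\diag(\epsilon)\,\Pi_\epsilon^*=\sum_\epsilon\chi_{P_\epsilon}\,\diag(\epsilon')=\diag(g_1,\ldots,g_k).
\]
Taking $v=u\,\diag(f_1,\ldots,f_k)$ then yields $v^*v=\diag(f_1,\ldots,f_k)$ and $vv^*=\diag(g_1,\ldots,g_k)$, giving the equivalence.

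The one genuinely delicate point is that the sorting permutation varies with $n$, so a priori the conjugating matrix need not have entries in $\mathcal{E}_0$. This is exactly what the decomposition into the finitely many zero-entropy pieces $P_\epsilon$ resolves, the sub-additivity and product properties of the anqie entropy (Corollary~\ref{cor_AE(f+g)_leq_AEf+AEg}, Theorem~\ref{thm_C_0_subCalg}) keeping every ingredient inside $\mathcal{E}_0$; the remaining verifications are routine.
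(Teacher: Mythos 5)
Your proof is correct, but it takes a genuinely different route from the paper's. The paper argues by induction on $k$: it first sorts the last $k$ entries by the inductive hypothesis, then merges $f_{1}=\chi_{A_{1}}$ with the top sorted entry $h_{2}=\chi_{A_{2}}$ via the explicit $2\times 2$ partial isometry $V=\left(\begin{smallmatrix} f_{1} & \chi_{A_{2}\setminus A_{1}}\\ 0 & \chi_{A_{2}\cap A_{1}}\end{smallmatrix}\right)$, which realizes the lattice identity $\diag(\chi_{A_{1}},\chi_{A_{2}})\sim_{0}\diag(\chi_{A_{1}\cup A_{2}},\chi_{A_{1}\cap A_{2}})$, and then invokes the inductive hypothesis a second time on the remaining block --- in effect an insertion sort implemented by union/intersection moves. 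You instead define the target $g_{i}=\chi_{\{r\geq i\}}$ in closed form from the rank function $r=\sum_{i}f_{i}$ and build a single global unitary $u=\sum_{\epsilon}\chi_{P_{\epsilon}}\Pi_{\epsilon}$ from the finitely many pointwise patterns $\epsilon\in\{0,1\}^{k}$, each piece $\chi_{P_{\epsilon}}$ staying in $\mathcal{E}_{0}$ because $\mathcal{E}_{0}$ is an algebra. Your version avoids the double induction, and it actually yields the stronger conclusion of unitary equivalence ($u\,\diag(f_{1},\ldots,f_{k})\,u^{*}=\diag(g_{1},\ldots,g_{k})$ with $u\in\mathcal{U}_{k}(\mathcal{E}_{0})$) rather than merely $\sim_{0}$; the cost is a decomposition into up to $2^{k}$ pieces, whereas the paper's merge step only ever manipulates two characteristic functions at a time. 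Your pattern decomposition is in the same spirit as the $\chi_{C_{i}}$-cutting used in the paper's Lemma \ref{diagonalization}, so it fits naturally into the surrounding development; all the entropy bookkeeping (Proposition \ref{01 valued function}, Corollary \ref{cor_AE(f+g)_leq_AEf+AEg}, Theorem \ref{thm_C_0_subCalg}) is correctly invoked, and the verification that $u$ is unitary and implements the conjugation is sound.
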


\begin{proof}
Assume inductively that the claim holds for some $k$ with $k\geq 1$. Then we may assume that diag$(f_{1},f_{2},\ldots, f_{k+1})\sim_{0}$ diag$(f_{1},h_{2},\ldots,h_{k+1})$,
where $h_{i}=\chi_{A_{i}}$, for $i=2,\ldots,k+1$, satisfying $A_{2}\supseteq A_{3}\supseteq \cdot\cdot\cdot \supseteq A_{k+1}$
and $\sum_{i=2}^{k+1}f_{i}(n)=\sum_{i=2}^{k+1}h_{i}(n)$.
Suppose that $f_{1}=\chi_{A_{1}}$ for some $A_{1}$.
Let
$$V=\left(
     \begin{array}{cc} \vspace{.2cm}
       f_{1} & \chi_{A_{2}\setminus A_{1}}\\
       0 & \chi_{A_{2}\cap A_{1}} \\
     \end{array}
   \right)
~{\rm and}~U=\diag(V,h_{3},\ldots,h_{k}).
$$
Note that $\chi_{A_{1}\cap A_{2}}=f_{1}h_{2}$ and $\chi_{A_{2}\setminus A_{1}}=h_{2}-f_{1}h_{2}$.
So $U\in M_{k+1}(\mathcal{E}_{0})$ and
$$
U^{*}U=\diag(f_{1},h_{2},h_{3},\ldots,h_{k+1})\sim_{0}UU^{*}=\diag(\chi_{A_{1}\cup A_{2}},\chi_{A_{1}\cap A_{2}},h_{3}\ldots,h_{k+1}).
$$
It is easy to see that
$$f_{1}(n)+\sum_{i=2}^{k+1}h_{i}(n)=\chi_{A_{1}\cup A_{2}}(n)+\chi_{A_{1}\cap A_{2}}(n)+\sum_{i=3}^{k+1}h_{i}(n).$$
By induction on the $k$ case, we further assume that
$$
\diag(\chi_{A_{1}\cup A_{2}},\chi_{A_{1}\cap A_{2}},h_{3}\ldots,h_{k+1})\sim_{0}\diag(\chi_{A_{1}\cup A_{2}},g_{2}, g_{3},\ldots, g_{k+1}),
$$
where $g_{2}\geq g_{3}\geq \cdots\geq g_{k+1}$ and $\chi_{A_{1}\cap A_{2}}(n)+\sum_{i=3}^{k+1}h_{i}(n)=\sum_{i=2}^{k+1}g_{i}(n)$. Observe that $\chi_{A_{1}\cup A_{2}}(n)=0$ implies $h_{i}(n)=0$ for $i=2,\ldots,k+1$. Thus $\sum_{i=2}^{k+1}g_{i}(n)=0$, and $g_{j}(n)=0$ for all $j$ with $j=2,\ldots,k+1$.
Let $g_{1}=\chi_{A_{1}\cup A_{2}}$. Then $g_{1}\geq g_{2}\geq \cdots \geq g_{k+1}$. Now we obtain the $k+1$ case of the claim.
\end{proof}

From Lemma \ref{diagonalization} and Lemma \ref{simply form}, we see that
%
%
\[\ba{lll}
\mathcal{D}(\mathcal{E}_{0}) &=&
\{[{\rm diag}(f_{1},f_{2},\ldots,f_{k})]_{\mathcal{D}}: k\geq 1, f_{i}\in \mathcal{P}(\mathcal{E}_{0}),
i=1,\ldots,k \\
&&  {\rm and}~f_{1}\geq f_{2}\geq \cdots \geq f_{k}\}.
\ea\]
The following theorem gives a more simple description of $\mathcal{D}(\mathcal{E}_{0})$.

\begin{lemma} \label{thm:isomorphic}
The semigroup $\mathcal{D}(\mathcal{E}_{0})$ is isomorphic to the additive semigroup $\{f\in \mathcal{E}_{0}: f(\mathbb{N})\subseteq \mathbb{N}\}$.
\end{lemma}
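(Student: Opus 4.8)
The plan is to realize the isomorphism through the pointwise rank function of a projection. I would define a map $\Theta$ from $\mathcal{D}(\mathcal{E}_{0})$ to $\{f\in\mathcal{E}_{0}: f(\mathbb{N})\subseteq\mathbb{N}\}$ by sending the class of a projection $P=(p_{ij})\in\mathcal{P}_{k}(\mathcal{E}_{0})$ to the function $n\mapsto \sum_{i=1}^{k}p_{ii}(n)=\mathrm{rank}(P(n))$. As already observed in the proof of Lemma \ref{diagonalization}, this sum lies in $\mathcal{E}_{0}$ by Corollary \ref{cor_AE(f+g)_leq_AEf+AEg} and takes values in $\{0,1,\ldots,k\}\subseteq\mathbb{N}$, so $\Theta$ does land in the claimed additive semigroup, which is itself closed under addition by the same corollary.

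First I would check that $\Theta$ is well defined and additive. If $P=u^{*}u$ and $Q=uu^{*}$ witness $P\sim_{0}Q$, then evaluating at each $n\in\mathbb{N}$ gives $P(n)=u(n)^{*}u(n)$ and $Q(n)=u(n)u(n)^{*}$, so $P(n)$ and $Q(n)$ are Murray--von Neumann equivalent projections over $\mathbb{C}$ and hence have equal rank; thus the rank function depends only on the $\sim_{0}$-class, and $\Theta$ is well defined. Additivity is immediate, since $\diag(P,Q)$ has pointwise rank $\mathrm{rank}(P(n))+\mathrm{rank}(Q(n))$, so $\Theta([P]_{\mathcal{D}}+[Q]_{\mathcal{D}})=\Theta([P]_{\mathcal{D}})+\Theta([Q]_{\mathcal{D}})$.

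For injectivity I would use the canonical form supplied by Lemmas \ref{diagonalization} and \ref{simply form}: every class is represented by a diagonal projection $\diag(g_{1},\ldots,g_{k})$ whose entries are characteristic functions with $g_{1}\ge g_{2}\ge\cdots\ge g_{k}$. After padding the shorter tuple with zero diagonal entries, which preserves the $\sim_{0}$-class, two such representatives of the same size with the same rank function satisfy $\sum_{i}g_{i}=\sum_{i}g_{i}'$. Since the values $g_{i}(n)$ form a nonincreasing $\{0,1\}$-sequence, one has $g_{i}(n)=1$ exactly when $i\le\sum_{j}g_{j}(n)$; hence each $g_{i}$ is determined by the rank function, forcing $g_{i}=g_{i}'$ and equality of the two diagonal projections. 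Therefore $\Theta$ is injective.

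Finally, for surjectivity, take $f\in\mathcal{E}_{0}$ with $f(\mathbb{N})\subseteq\mathbb{N}$. Since $f$ is bounded, its range is a finite subset of $\{0,1,\ldots,k\}$ with $k=\sup_{n}f(n)$, so $f\in\mathcal{F}_{0}(\mathbb{N})$. Setting $g_{i}=\chi_{\{n:\, f(n)\ge i\}}$ for $1\le i\le k$ gives $g_{1}\ge\cdots\ge g_{k}$ with $\sum_{i=1}^{k}g_{i}=f$, and each $g_{i}=\sum_{c\ge i}\chi_{f^{-1}(\{c\})}$ is a finite sum of zero-entropy characteristic functions by Proposition \ref{01 valued function}, hence lies in $\mathcal{P}(\mathcal{E}_{0})$ by Corollary \ref{cor_AE(f+g)_leq_AEf+AEg}. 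Then $\diag(g_{1},\ldots,g_{k})\in\mathcal{P}_{k}(\mathcal{E}_{0})$ and $\Theta$ sends its class to $f$. I expect the only genuinely delicate point to be injectivity, where one must combine the zero-padding stability of $\sim_{0}$ with the uniqueness of the decreasing characteristic-function form so that the rank function becomes a complete invariant; the structural heavy lifting (diagonalization and ordering) has already been carried out in Lemmas \ref{diagonalization} and \ref{simply form}, so the remaining work is essentially bookkeeping around the rank function.
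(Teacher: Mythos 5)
Your proposal is correct and follows essentially the same route as the paper: the paper's map $\mathrm{Tr}$ is exactly your rank function $\Theta$, well-definedness is checked via pointwise Murray--von Neumann equivalence of matrices over $\mathbb{C}$, injectivity reduces to the decreasing diagonal canonical form from Lemmas \ref{diagonalization} and \ref{simply form} with the sets $\{n:\sum_j g_j(n)\ge i\}$ recovering each entry, and surjectivity uses the level sets $\chi_{\{n:\,f(n)\ge i\}}$ together with Proposition \ref{01 valued function}. The only cosmetic difference is that you make the zero-padding of the shorter tuple explicit, which the paper leaves implicit.
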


\begin{proof}
Let Tr be the map from $\mathcal{D}(\mathcal{E}_{0})$ to $\{f\in \mathcal{E}_{0}: f(\mathbb{N})\subseteq \mathbb{N}\}$ defined by Tr$([p]_{\mathcal{D}})(n)$ $=\sum_{i=1}^{k}p_{ii}(n)$, for $p=(p_{ij})_{1\leq i,j\leq k}\in \mathcal{P}_{k}(\mathcal{E}_{0})$ and $n\in \mathbb{N}$.
We first show that Tr is well-defined, i.e., for any $p,q\in \mathcal{P}_{\infty}(\mathcal{E}_{0})$, if $[p]_{\mathcal{D}}=[q]_{\mathcal{D}}$,
then Tr$([p]_{\mathcal{D}})=$Tr$([q]_{\mathcal{D}})$. Since $p\sim_{0} q$, we have ${\rm rank}(p(n))={\rm rank}(q(n))$ for any $n$.
Thus $\sum_{i=1}^{k}p_{ii}(n)=\sum_{i=1}^{k}q_{ii}(n)$.

Now we prove that Tr is one-to-one and onto.
On one hand, suppose that $p,q\in \mathcal{P}_{\infty}(\mathcal{E}_{0})$ and Tr$([p]_{\mathcal{D}})=$Tr$([q]_{\mathcal{D}})$.
Using Lemmas \ref{diagonalization} and \ref{simply form}, we assume that for some $k\geq 1$,
$$
p\sim_{0} \diag (f_{1},f_{2},\ldots,f_{k})~ {\rm and}~q\sim_{0} \diag(g_{1},g_{2},\ldots,g_{k}),
$$
where $f_{1}\geq \cdots \geq f_{k}$ and $g_{1} \geq \cdots \geq g_{k}$.  Since Tr$([p]_{\mathcal{D}})=$Tr$([q]_{\mathcal{D}})$, we have $\sum_{i=1}^{k}f_{i}(n)=\sum_{i=1}^{k}g_{i}(n)$ for any $n\in \mathbb{N}$. Let $C_{i}=\{n:\sum_{j=1}^{k}f_{j}(n)\geq i\}$.
Then $f_{i}=\chi_{C_{i}}=g_{i}$ for $i=1,\ldots, k$. Hence $[p]_{\mathcal{D}}=[q]_{\mathcal{D}}$, i.e., the map Tr is injective.

On the other hand, for any $g\in\mathcal{E}_{0}$ with $g(\mathbb{N})\subseteq \{0,1,2,\ldots,k\}$,
let $f_{i}(n)=\chi_{\{m:g(m)\geq i\}}(n)$ for $i=1,\ldots,k$. Then $\AE(f_{i})=0$ by Proposition \ref{01 valued function} and $g=\sum_{i=1}^{k}f_{i}=$
Tr$([$diag$(f_{1},\ldots,f_{k})]_{\mathcal{D}})$. Thus Tr is a surjective map. In addition, the map Tr is a homomorphism. This is because that for any $k,l\geq 1$,
\[\ba{lll} \vspace{.2cm}
   & &\text{Tr}([\diag(f_{1},f_{2},\ldots,f_{k})]_{\mathcal{D}}+[\diag(g_{1},g_{2},\ldots,g_{l})]_{\mathcal{D}}) \\ \vspace{.2cm}
   &=& \text{Tr}([\diag(f_{1},f_2,\ldots,f_{k},g_{1},g_2,\ldots,g_{l})]_{\mathcal{D}})
   = \sum_{i=1}^{k}f_{i}+\sum_{j=1}^{l}g_{j} \\
   &=& \text{Tr}( [\diag(f_{1},f_{2},\ldots,f_{k})]_{\mathcal{D}})+\text{Tr} ([\diag(g_{1},g_{2},\ldots,g_{l})]_{\mathcal{D}}).
\ea\]
Therefore Tr is an isomorphism.
\end{proof}

The $K_0$-group $K_{0}(\cE_0)$ is the Grothendieck group of the semigroup $\mathcal{D}(\mathcal{E}_{0})$, i.e., $K_{0}(\cE_0)=\cD(\cE_0)\times \cD(\cE_0)/\sim$. We use $\<[p]_{\mathcal{D}},[q]_{\mathcal{D}}\>$ to denote the equivalence class of $([p]_{\mathcal{D}}, [q]_{\mathcal{D}})$ under $\sim$.

\begin{proof}[Proof of Theorem \ref{thm_K0_group}]
Let $\Phi$ be the map from $K_{0}(\mathcal{E}_{0})$ to the additive group $\{f\in \mathcal{E}_{0}: f(\mathbb{N})\subseteq \mathbb{Z}\}$ defined by

\[\Phi(\<[p]_{\mathcal{D}},[q]_{\mathcal{D}}\>)=\text{Tr}([p]_{\mathcal{D}})-\text{Tr}([q]_{\mathcal{D}}),~~p,q\in \mathcal{P}_{\infty}(\mathcal{E}_{0}).\]
Here the map Tr is given in Lemma \ref{thm:isomorphic}, that is Tr$([p]_{\mathcal{D}})=\sum_{i=1}^{k}p_{ii}(n)$ for any $p=(p_{ij})_{1\leq i,j\leq k}\in \mathcal{P}_{k}(\mathcal{E}_{0})$. By Lemma \ref{thm:isomorphic}, we know that Tr is an isomorphism from $\mathcal{D}(\mathcal{E}_{0})$ to the semigroup $\{f\in \mathcal{E}_{0}:f(\mathbb{N})\subseteq \mathbb{N}\}$.

We first show that $\Phi$ is well-defined. Suppose that $\<[p_{1}]_{\mathcal{D}},[p_{2}]_{\mathcal{D}}\>=\<[p_{1}']_{\mathcal{D}},[p_{2}']_{D}\>$, where $p_{1}, p_{2},p_{1}',p_{2}'\in \mathcal{P}_{\infty}(\mathcal{E}_{0})$. Then there is some $[q]_{\mathcal{D}}$ such that $[p_{1}]_{\mathcal{D}}+[p_{2}']_{\mathcal{D}}+[q]_{\mathcal{\mathcal{D}}}=[p_{1}']_{\mathcal{D}}+[p_{2}]_{\mathcal{D}}+[q]_{\mathcal{\mathcal{D}}}$. So Tr$([p_{1}]_{\mathcal{D}})$+Tr$([p_{2}']_{\mathcal{D}})=$ Tr$([p_{1}']_{\mathcal{D}})$+Tr$([p_{2}]_{\mathcal{D}})$ and $\Phi(\<[p_{1}]_{\mathcal{D}},[p_{2}]_{\mathcal{D}}\>)=\Phi(\<[p_{1}']_{\mathcal{D}},[p_{2}']_{\mathcal{D}}\>)$. Since Tr is a surjective map, it follows that $\Phi$ is surjective.

Next, we show that $\Phi$ is injective. Suppose that $p_1,p_2,p_1^\prime,p_2^\prime$ are elements in $\cP_\infty(\cE_0)$ such that $\Phi\left(\<[p_1]_\cD,[p_2]_\cD\>\right)=\Phi\left(\<[p_1^\prime]_\cD,[p_2^\prime]_\cD\>\right)$. 
Then
\[\ba{lll} \vspace{.1cm}
\text{Tr}([\diag(p_1,p_2^\prime)]_{\mathcal{D}})
&=& \text{Tr}([p_1]_{\mathcal{D}})+\text{Tr}([p_2']_{\mathcal{D}}) \\ \vspace{.1cm}
&=& \text{Tr}([p_1^\prime]_{\mathcal{D}})+\text{Tr}([p_2]_{\mathcal{D}}) \\
&=& \text{Tr}([\diag(p_1^\prime,p_2)]_{\mathcal{D}}).
\ea \]
Note that the map Tr is injective, then $[\diag(p_1,p_2^\prime)]_{\mathcal{D}}=[\diag(p_1',p_2)]_{\mathcal{D}}$. Hence $\Phi$ is injective. It is not hard to check that $\Phi$ is a group homomorphism. Thus we conclude that $\Phi$ is a group isomorphism.
\end{proof}

Next, we show $K_{1}(\mathcal{E}_{0})=0$. To prove this result, we first recall the definition of $K_{1}$-group of unital C*-algebras.
Let $\mathcal{A}$ be a unital C*-algebra with the unit $1_{\mathcal{A}}$. We use $\mathcal{U}(\mathcal{A})$ to denote the group of unitary elements of $\mathcal{A}$, i.e.,
$$
\mathcal{U}(\mathcal{A})=\{u\in \mathcal{A}: u^{*}u=uu^{*}=1_{\mathcal{A}}\}.
$$
Two elements $u,v\in \mathcal{U}(\mathcal{\mathcal{A}})$ are called homotopic in $\mathcal{U}(\mathcal{A})$,
denoted by $u\sim_{h}v$, if there is a continuous map $\varphi(t)$ from $[0,1]$ into $\mathcal{U}(\mathcal{A})$
such that $\varphi(0)=u$ and $\varphi(1)=v$. It is not hard to check that $\sim_{h}$ is an equivalence relation on $\mathcal{U}(\mathcal{A})$.
Denote by $\mathcal{U}(\mathcal{A})_{0}$ the connected component of $1_{\mathcal{A}}$ in $\mathcal{U}_{\mathcal{A}}$,
i.e., $\mathcal{U}(\mathcal{A})_{0}=\{u\in \mathcal{U}(\mathcal{A}): u\sim_{h} 1_{\mathcal{A}}\}$. It is known that $\mathcal{U}(\mathcal{A})_{0}$ is a normal subgroup of $\mathcal{U}(\mathcal{A})$.

Let $\mathcal{U}_{k}(\mathcal{A})=\mathcal{U}(M_{k}(\mathcal{A}))$. Also let $\mathcal{U}_{\infty}(\mathcal{A})=\cup_{k=1}^{\infty}\mathcal{U}_{k}(\mathcal{A})$, which is a disjoint union. For $u\in \mathcal{U}_{n}(\mathcal{A})$ and $v\in \mathcal{U}_{m}(\mathcal{A})$, we define $u\sim_{1}v$ if and only if there is a $k\geq \max\{n,m\}$ such that diag$(u,1_{k-n})\sim_{h}$ diag$(v,1_{k-m})$. It is known that $\sim_{1}$ is an equivalence relation on $\mathcal{U}_{\infty}(\mathcal{A})$. The $K_{1}$-group of $\mathcal{A}$ is defined to be $\mathcal{U}_{\infty}(\mathcal{A})/\sim_{1}$.

To prove that $K_{1}(\mathcal{E}_{0})$ is trivial, we need to show $\mathcal{U}_{k}(\mathcal{E}_{0})=\mathcal{U}_{k}(\mathcal{E}_{0})_{0}$ for all $k$. That is, for any $u\in \mathcal{U}_{k}(\mathcal{E}_{0})$, it satisfies $u\sim_{h}I_{k}$, where $I_k$ denotes the $k\times k$ diagonal matrix in $\cU_k(\cE_0)$ with diagonal elements all equal $1$. We recall some well-known results in operator algebra (see, e.g., \cite{RLL}) in the following lemma.
\begin{lemma}\label{some basic results about unitary group} Let $\mathcal{A}$ be a unital C*-algebra with the unit $1_{\mathcal{A}}$. The following propositions hold.

(\romannumeral1) Let $u\in \mathcal{U}(\mathcal{A})$. Suppose that the spectrum $sp(u)$ of $u$ is not $S^{1}$. Then $u\sim_{h}1_{\mathcal{A}}$.

(\romannumeral2) Let $u,v\in \mathcal{U}(\mathcal{A})$ and $\|u-v\|<2$, where $\|\cdot\|$ is the norm on $\mathcal{A}$. Then $u\sim_{h}v$.

(\romannumeral3) $\mathcal{U}(\mathcal{A})_{0}=\{\exp(\i v_{1})\cdots\exp(\i v_{n}): n=1,2,\ldots, v_{j}=v_{j}^{*}\in \mathcal{A},j=1,\ldots,n\}$.
\end{lemma}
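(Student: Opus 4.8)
The plan is to derive all three statements from the continuous functional calculus, treating (\romannumeral1) as the foundation on which (\romannumeral2) and (\romannumeral3) rest. The single geometric input is that a unitary whose spectrum misses even one point of the circle admits a self-adjoint logarithm, and hence a norm-continuous homotopy to $1_{\mathcal{A}}$.

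For (\romannumeral1), I would use that $sp(u)\subseteq S^{1}$ and, by hypothesis, $sp(u)\subseteq S^{1}\setminus\{e^{\i\theta_0}\}$ for some $\theta_0$. On the arc $S^{1}\setminus\{e^{\i\theta_0}\}$ there is a continuous branch of the argument, i.e. a continuous real-valued function $g$ with $e^{\i g(z)}=z$, and $g$ is bounded on the compact set $sp(u)$. Setting $h=g(u)$ through the continuous functional calculus produces a self-adjoint element (since $g$ is real-valued) with $\exp(\i h)=u$. Then $\varphi(t)=\exp(\i t h)$ is a norm-continuous path in $\mathcal{U}(\mathcal{A})$ with $\varphi(0)=1_{\mathcal{A}}$ and $\varphi(1)=u$, so $u\sim_{h}1_{\mathcal{A}}$. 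The only point needing care is that $g$ is genuinely continuous on a set containing $sp(u)$, so that $g(u)$ is a well-defined self-adjoint element and the composition rule of the functional calculus gives $\exp(\i g(u))=u$.

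For (\romannumeral2), I would pass to $w=v^{*}u\in\mathcal{U}(\mathcal{A})$. Since $v$ is unitary, $\|w-1_{\mathcal{A}}\|=\|v^{*}(u-v)\|\le\|u-v\|<2$. For any $z\in sp(w)\subseteq S^{1}$ one has $|z-1|\le\|w-1_{\mathcal{A}}\|<2$, which excludes $z=-1$; hence $sp(w)\neq S^{1}$, and (\romannumeral1) yields a path $\psi(t)$ of unitaries from $1_{\mathcal{A}}$ to $w$. Because left multiplication by the fixed unitary $v$ is an isometry of $\mathcal{A}$, the map $t\mapsto v\,\psi(t)$ is a norm-continuous path of unitaries from $v$ to $vw=u$, giving $u\sim_{h}v$.

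For (\romannumeral3), write $G$ for the right-hand side. Each generator $\exp(\i v_j)$ is joined to $1_{\mathcal{A}}$ by $t\mapsto\exp(\i t v_j)$, so every finite product lies in the connected component of $1_{\mathcal{A}}$; thus $G\subseteq\mathcal{U}(\mathcal{A})_{0}$. Conversely $G$ is a subgroup, and by the argument of (\romannumeral1) it contains the whole norm-ball $\{u\in\mathcal{U}(\mathcal{A}):\|u-1_{\mathcal{A}}\|<2\}$, since each such $u$ equals a single $\exp(\i h)$. A subgroup containing a neighbourhood of the identity is open (translate the ball by elements of $G$), and an open subgroup is also closed because its complement is a union of open cosets. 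Hence $G$ is a clopen set containing $1_{\mathcal{A}}$, so it contains the connected component $\mathcal{U}(\mathcal{A})_{0}$; combining the two inclusions gives $G=\mathcal{U}(\mathcal{A})_{0}$. I expect the main obstacle to lie entirely in step (\romannumeral1): making the branch-of-logarithm construction precise — continuity of $g$ on a set containing $sp(u)$ and self-adjointness of $h=g(u)$ — is what powers the whole lemma, while (\romannumeral2) and (\romannumeral3) reduce to short topological-group arguments once (\romannumeral1) is in hand.
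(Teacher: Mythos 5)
Your proof is correct. The paper does not prove this lemma at all --- it simply records these facts as well-known and cites R{\o}rdam--Larsen--Laustsen --- and your argument (self-adjoint logarithm via a continuous branch of the argument on $S^1$ minus a point, reduction of (\romannumeral2) to (\romannumeral1) through $w=v^*u$, and the open-subgroup argument for (\romannumeral3)) is precisely the standard proof found in that reference.
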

In the rest of this section, we use $I_{k}$ to denote diag$(1,1,\ldots,1)$, the unit of $\mathcal{E}_{0}\otimes M_{k}(\mathbb{C})$ for $k\geq 1$.
Let $E_{ij}^{(k)}$ be the element in $\mathcal{E}_{0}\otimes M_{k}(\mathbb{C})$ obtained by exchanging the $i$-th row and the $j$-th row of $I_{k}$.
Since $sp(E_{ij}^{(k)})$ is a finite set, $E_{ij}^{(k)}\sim_{h}I_{k}$ by Lemma \ref{some basic results about unitary group}(\romannumeral1).
First, we consider the case $k=1$.
\begin{lemma}\label{the unitary group}
For any $f\in \mathcal{U}(\mathcal{E}_{0})$, we have $f\sim_{h} 1$.
\end{lemma}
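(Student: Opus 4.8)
The plan is to reduce everything to the two elementary facts recorded in Lemma~\ref{some basic results about unitary group}: that a unitary whose spectrum misses a point of $S^1$ is homotopic to the identity (part (\romannumeral1)), and that two unitaries within norm-distance $2$ are homotopic (part (\romannumeral2)).

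First I would unwind what $f\in\mathcal{U}(\mathcal{E}_0)$ means concretely. Since $\mathcal{E}_0$ is an abelian C*-algebra of arithmetic functions, the relation $f\bar f=1$ forces $|f(n)|=1$ for every $n$, so $f$ is a map $\mathbb{N}\to S^1$ with $\AE(f)=0$. Identifying $\mathcal{E}_0$ with $C(E_0(\mathbb{N}))$ via the Gelfand transform and using that $\iota(\mathbb{N})$ is dense in $E_0(\mathbb{N})$ (Proposition~\ref{the set of natural numbers are dense in X}), the spectrum of $f$ is $sp(f)=\overline{f(\mathbb{N})}\subseteq S^1$. If it happens that $\overline{f(\mathbb{N})}\neq S^1$, then Lemma~\ref{some basic results about unitary group}(\romannumeral1) already yields $f\sim_h 1$ and there is nothing more to do.

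The substantive case is when $f(\mathbb{N})$ is dense in $S^1$ (for instance $f(n)=e^{2\pi\i n\theta}$ with $\theta$ irrational), so that $sp(f)=S^1$ and part (\romannumeral1) does not apply directly. Here I would invoke the finite-range approximation. Regard $f$ as a map from $\mathbb{N}$ into the compact metric space $X=\overline{f(\mathbb{N})}\subseteq\mathbb{C}$; by the remark following Theorem~\ref{thm_dynamicalGN}, its anqie entropy as such a map is again $0$. Applying Theorem~\ref{implification theorem} (equivalently Theorem~\ref{theorem_finite_appoximate}) with $\lambda=0$ produces, for each $N\geq1$, a map $f_N\colon\mathbb{N}\to f(\mathbb{N})$ of finite range with $\AE(f_N)\le 0$, hence $\AE(f_N)=0$, and $\|f_N-f\|_{l^\infty}\le 1/N$. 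The key point is that $f_N$ takes its values in $f(\mathbb{N})\subseteq S^1$, so $f_N$ is itself a unitary of $\mathcal{E}_0$; moreover its spectrum is the finite set $f_N(\mathbb{N})$, which is certainly not all of $S^1$.

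Finally I would assemble the homotopies. By Lemma~\ref{some basic results about unitary group}(\romannumeral1), $f_N\sim_h 1$. Taking $N\ge1$ we have $\|f-f_N\|<2$, so Lemma~\ref{some basic results about unitary group}(\romannumeral2) gives $f\sim_h f_N$. Since $\sim_h$ is an equivalence relation, $f\sim_h f_N\sim_h 1$, as required. The only place demanding care — and the crux of the argument — is ensuring the finite-range approximant remains a unitary; this is exactly why I would use the sharper Theorem~\ref{implification theorem}, whose output lands in $f(\mathbb{N})$ rather than merely near it, so that no separate projection back onto $S^1$ is needed before applying the spectral criterion.
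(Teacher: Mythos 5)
Your proof is correct, but it takes a genuinely different route from the paper's. The paper handles the case $\overline{f(\mathbb{N})}=S^1$ by applying Lemma~\ref{subset with zero entropy} to $|\mathrm{Im}(f)|$: it produces a zero-entropy set $C$ sandwiched between $\{n:|\mathrm{Im}(f)(n)|\le 1/4\}$ and $\{n:|\mathrm{Im}(f)(n)|<1/2\}$, forms $g=f\chi_C+1-\chi_C$, observes that $sp(g)$ avoids $\pm\i$ so that $g\sim_h 1$ by Lemma~\ref{some basic results about unitary group}(\romannumeral1), and then checks $\|f-g\|_{l^\infty}=\|(1-f)(1-\chi_C)\|_{l^\infty}<2$ (using that $f(n)$ stays away from $1$ off $C$) to conclude via part (\romannumeral2). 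You instead invoke the finite-range approximation Theorem~\ref{implification theorem} with $\lambda=0$ to get a zero-entropy $f_N$ valued in $f(\mathbb{N})\subseteq S^1$ with $\|f-f_N\|_{l^\infty}\le 1/N<2$; since $f_N$ is then a unitary of $\mathcal{E}_0$ with finite spectrum, parts (\romannumeral1) and (\romannumeral2) finish the argument. Both are valid and rest on the same two homotopy facts; your version is shorter and avoids the explicit estimate $\|(1-f)(1-\chi_C)\|<2$, at the cost of calling on the heavier Theorem~\ref{implification theorem} rather than the lighter Lemma~\ref{subset with zero entropy} (though both are available at this point in the paper, so there is no circularity). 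Your argument also makes transparent the underlying reason the lemma holds: by total disconnectedness, every zero-entropy unitary is a norm limit of finite-spectrum unitaries, and such unitaries are always connected to $1$. Your observation that Theorem~\ref{implification theorem} outputs values in $f(\mathbb{N})$ itself --- so that the approximant is automatically unitary with no need to project back onto $S^1$ --- is exactly the point that makes this shortcut work.
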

\begin{proof}
Note that the spectrum of any function $\widetilde{f}\in l^{\infty}(\mathbb{N})$ is $sp(\widetilde{f})=\overline{\widetilde{f}(\mathbb{N})}$.
Suppose that $f\in \mathcal{U}(\mathcal{E}_{0})$, i.e., $|f(n)|=1$ for any $n\in \mathbb{N}$. If $\overline{f(\mathbb{N})}\neq S^{1}$, then $f\sim_{h}1$ by Lemma \ref{some basic results about unitary group}(\romannumeral1).
In the following, we assume that $\overline{f(\mathbb{N})}=S^{1}$.
Note that $\AE(f)=0$ implies $\AE(|\text{Im}(f)|)=0$.
Let $F_{1}=\{n\in \mathbb{N}: |\text{Im}(f)(n)|\leq {1}/{4}\}$ and $F_{2}=\{n\in \mathbb{N}: |\text{Im}(f)(n)|<{1}/{2}\}$. Applying Lemma \ref{subset with zero entropy},
there is a $C\subseteq \mathbb{N}$ with $F_{1}\subseteq C\subseteq F_{2}$, such that $\AE(\chi_{C})=0$.  Let $g=f\chi_{C}+1-\chi_{C}$.
We can check that $g\in \mathcal{U}(\mathcal{E}_{0})$. It is easy to see $sp(g)\neq S^{1}$. So
$g\sim_{h} 1$  by Lemma \ref{some basic results about unitary group}(\romannumeral1). Since $\|f-g\|_{l^{\infty}}=\|(1-f)(1-\chi_{C})\|_{l^{\infty}}<2$, $f\sim_{h}g$ by
Lemma \ref{some basic results about unitary group}(\romannumeral2). Thus $f\sim_{h}1$.
\end{proof}

\vspace{.4cm}

\begin{proof}[Proof of Corollary \ref{zero entropy exponential expression}]
By Lemmas \ref{some basic results about unitary group}(\romannumeral3) and \ref{the unitary group}, we have, for any $f\in \mathcal{U}(\mathcal{E}_{0})$,
there is a real-valued function $g\in \mathcal{E}_{0}$ such that $f=\exp(\i g)$, as claimed in Corollary \ref{zero entropy exponential expression}.
\end{proof}
Next we consider the case $k=2$.
\begin{lemma}\label{k=2 unitary group}
For any $u\in \mathcal{U}_{2}(\mathcal{E}_{0})$, we have $u\sim_{h} I_{2}$.
\end{lemma}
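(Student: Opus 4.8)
The plan is to reduce an arbitrary $u\in\mathcal{U}_2(\mathcal{E}_0)$ to a diagonal unitary and then apply Lemma \ref{the unitary group} to each diagonal entry. Write $u=\begin{pmatrix} a & b\\ c& d\end{pmatrix}$ with $a,b,c,d\in\mathcal{E}_0$; since $\mathcal{E}_0$ is a $*$-algebra, $\bar a,\bar c\in\mathcal{E}_0$ as well. Unitarity forces $|a(n)|^2+|c(n)|^2=1$ and $\bar a(n)b(n)+\bar c(n)d(n)=0$ for every $n$. Throughout I use that $\mathcal{U}(M_2(\mathcal{E}_0))_0$ is a subgroup, so that a product of unitaries each homotopic to $I_2$ is again homotopic to $I_2$, and that a permutation-type unitary has finite spectrum, so $E_{ij}^{(2)}\sim_h I_2$ by Lemma \ref{some basic results about unitary group}(\romannumeral1).

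First I would arrange that the pivot $u_{11}$ is bounded below. Viewing $n\mapsto u(n)$ as a map from $\mathbb{N}$ into $M_2(\mathbb{C})$, its anqie entropy equals $\AE(a,b,c,d)=0$, the vanishing coming from subadditivity (Theorem \ref{thm_entropy_additive}). Applying Lemma \ref{subset with zero entropy} with $U_1=\{M:|M_{11}|>\tfrac12\}$ and $K=\{M:|M_{11}|\ge\tfrac35\}$ produces $C\subseteq\mathbb{N}$ with $\AE(\chi_C)=0$ on which $|a|>\tfrac12$, while off $C$ one has $|a|<\tfrac35$ and hence $|c|>\tfrac45$. Set $P=\chi_C I_2+\chi_{\mathbb{N}\setminus C}E_{12}^{(2)}$, a unitary in $\mathcal{U}_2(\mathcal{E}_0)$ whose spectrum lies in $\{1,-1\}$, so $P\sim_h I_2$ by Lemma \ref{some basic results about unitary group}(\romannumeral1). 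Replacing $u$ by $v=Pu$ (harmless since $u=P^*v$ and $P\sim_h I_2$), I may assume $|v_{11}(n)|\ge\tfrac12$ for all $n$.

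Next I diagonalize by an explicit rotation. With $v=\begin{pmatrix} a & b\\ c& d\end{pmatrix}$ now satisfying $|a|\ge\tfrac12$, put $R=\begin{pmatrix} \bar a & \bar c\\ -c& a\end{pmatrix}\in\mathcal{U}_2(\mathcal{E}_0)$, noting $\det R=|a|^2+|c|^2=1$. A direct computation using the orthonormality relations gives $Rv=\diag(1,\det v)$, so $v=R^*\diag(1,\det v)$. Since $\det v\in\mathcal{U}(\mathcal{E}_0)$, Lemma \ref{the unitary group} yields $\diag(1,\det v)\sim_h I_2$, and it remains only to prove $R\sim_h I_2$.

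The main obstacle is precisely this last step. Because $\det R=1$ and $R$ is unitary, the two eigenvalues of $R(n)$ lie on $S^1$ with real part equal to $\mathrm{Re}\,a(n)$; thus $\mathrm{sp}(R)$ may fill all of $S^1$ and Lemma \ref{some basic results about unitary group}(\romannumeral1) does not apply directly. To repair this I would split $\mathbb{N}$ once more by entropy: apply Lemma \ref{subset with zero entropy} to $\mathrm{Re}\,a\in\mathcal{E}_0$, with $U_1=[-1,-\tfrac14)$ and $K=[-1,-\tfrac12]$, to obtain $C'$ with $\AE(\chi_{C'})=0$ such that $\mathrm{Re}\,a<-\tfrac14$ on $C'$ and $\mathrm{Re}\,a>-\tfrac12$ off $C'$. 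Setting $S=(1-2\chi_{C'})I_2$, a diagonal unitary with spectrum in $\{1,-1\}$ (hence $S\sim_h I_2$ and $S^2=I_2$), each eigenvalue of $(SR)(n)$ has real part $>-\tfrac12$: on $C'$ the sign flip sends the near-$(-1)$ eigenvalues of $R$ to real part $-\mathrm{Re}\,a>\tfrac14$, and off $C'$ they already have real part $\mathrm{Re}\,a>-\tfrac12$. Consequently $-1\notin\mathrm{sp}(SR)$, so $\mathrm{sp}(SR)\ne S^1$ and $SR\sim_h I_2$ by Lemma \ref{some basic results about unitary group}(\romannumeral1). Then $R=S(SR)\sim_h I_2$, whence $R^*\sim_h I_2$ and $v=R^*\diag(1,\det v)\sim_h I_2$, and finally $u=P^*v\sim_h I_2$. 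The genuinely delicate point, resolved by the entropy-splitting, is to push the spectrum of the rotation $R$ off the single bad value $-1$ while keeping every cut-off function inside $\mathcal{E}_0$.
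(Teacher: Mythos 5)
Your proof is correct, and it takes a genuinely different route from the paper's. The paper first strips off the phase $h$ (essentially $-\det u$) to reduce to $u_1$ with second column $(\overline{f_2},-\overline{f_1})^{T}$, then partitions $\mathbb{N}$ according to whether $|f_1|$ exceeds $1/4$ or falls below $1/2$ (via Lemma \ref{subset with zero entropy}), shows each of the two resulting modifications $v_1,v_2$ has spectrum different from $S^1$ (for $v_1$ after rotating $g_1$ to $|g_1|$, which is where the lower bound and Lemma \ref{quotient of zero entropy functions} enter), and finally glues the two homotopies along $\chi_C$. You instead factor $u$ as a product $P^{*}R^{*}\diag(1,\det v)$ and place each factor into $\mathcal{U}_2(\mathcal{E}_0)_0$ separately: the diagonal factor is handled by Lemma \ref{the unitary group} applied to $\det v\in\mathcal{U}(\mathcal{E}_0)$, while the determinant-one rotation $R$, whose eigenvalues have real part $\mathrm{Re}\,a$, is pushed away from the single dangerous spectral value $-1$ by a sign flip supported on a zero-entropy set. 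Your $R$ is the $k=2$ instance of the Gram--Schmidt reduction the paper performs in the proof of Theorem \ref{thm_K1_group} (the matrix $\widetilde{u_1}$ there), but you dispose of the resulting rotation directly rather than invoking the present lemma, so there is no circularity. Your version buys a cleaner separation of the two obstructions (determinant versus rotation) and invokes the entropy splitting only where the spectrum genuinely threatens to fill $S^1$; the paper's version avoids the determinant factorization at the cost of gluing homotopies over a partition. One incidental remark: your opening row-swap step making $|v_{11}|\geq 1/2$ is superfluous, since $R$ is unitary and $Rv=\diag(1,\det v)$ for every unitary $v$ (because $|a|^2+|c|^2=1$ holds exactly), and your sign-flip argument for $R$ never uses the lower bound on $|a|$; such a normalization of the pivot is only needed in the general-$k$ induction.
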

\begin{proof}
For any $u\in \cU_2(\mathcal{E}_{0})$, one may assume that
$u=\left(
\begin{matrix}
f_{1} & h\overline{f_{2}} \\
f_{2}& -h\overline{f_{1}}
\end{matrix}
\right)$ for some $f_1,f_2\in \mathcal{E}_{0}$ with $|f_{1}(n)|^2+|f_{2}(n)|^2=1$ for any $n$, and $h\in l^{\infty}(\mathbb{N})$ with $|h(n)|=1$ such that $h\overline{f_{2}},h\overline{f_{1}}\in \mathcal{E}_{0}$. Thus $h=h\overline{f_{2}}f_{2}+h\overline{f_{1}}f_{1}\in \mathcal{E}_{0}$ and $h\in \cU(\cE_0)$. By Corollary \ref{zero entropy exponential expression}, there is a real-valued function $g\in \mathcal{E}_{0}$ such that $h=\exp(\i g)$, i.e., $h(n)=\exp (\i g(n))$ for $n\in \bN$. Note that the map $\pi:\, [0,1]\rightarrow \cU_2(\cE_0)$ defined by $\pi(t)=\left(
\begin{matrix}
f_{1} & \exp(\i tg) \overline{f_{2}} \\
f_{2} & -\exp(\i tg) \overline{f_{1}}
\end{matrix}
\right)$ is continuous. We obtain that
\[
u\, \sim_{h} \, u_{1}=\left(
\begin{matrix}
f_{1} & \overline{f_{2}} \\
f_{2} & -\overline{f_{1}}
\end{matrix}
\right).
\]
The eigenvalue functions of $u_{1}$ are
\begin{equation}\label{fuctions of eigenvalues}
\lambda_{1}(n)=\i\cdot \text{Im} f_{1}(n) +\sqrt{1-(\text{Im} f_{1}(n) )^2},\quad \lambda_{2}(n)=\i\cdot \text{Im} f_{1}(n)-\sqrt{1-(\text{Im} f_{1}(n))^2}.
\end{equation}
Then $sp(u_{1})=\overline{\lambda_{1}(\mathbb{N})}\cup\overline{\lambda_{2}(\mathbb{N})}$.
Note that $\AE(|f_{1}|)=0$.
Set
\[
F_{1}=\{n\in \mathbb{N}: {1/}{2}\leq |f_{1}(n)|\leq 1\},\quad F_{2}=\{n\in \mathbb{N}: {1}/{4}<|f_{1}(n)|< {5}/{4}\}.
\]
By Lemma \ref{subset with zero entropy}, there is a subset $C$ of $\mathbb{N}$ with $F_{1}\subseteq C\subseteq F_{2}$, such that $\AE(\chi_{C})=0$.
Suppose that
\[\ba{lll}  \vspace{.1cm}
v_{1} &=& u_{1}\chi_{C}+(1-\chi_{C})\cdot \diag (1,-1)=  \left(
\begin{matrix}
g_{1} & \overline{g_{2}} \\
g_{2} & -\overline{g_{1}}
\end{matrix}
\right), \\
v_{2} &=& u_{1}(1-\chi_{C})+\chi_{C}E_{12}^{(2)}=
\left(
\begin{matrix}
h_{1} & \overline{h_{2}} \\
h_{2} & -\overline{h_{1}}
\end{matrix}
\right).
\ea\]
This construction ensures that
\[
u_1=  u_1\chi_C+u_1(1-\chi_C) =  v_1\chi_C + v_2(1-\chi_C).
\]
Note that $g_{1}(n)=f_{1}(n)\chi_{C}(n)+1-\chi_{C}(n)$, then $|g_{1}(n)|> {1}/{4}$ for all $n\in \mathbb{N}$. By Lemma \ref{quotient of zero entropy functions} and Corollary
\ref{zero entropy exponential expression}, we can rewrite $g_{1}(n)$ as $|g_{1}(n)|\exp(\i\widetilde{g_{1}}(n))$ with some real-valued function $\widetilde{g_{1}}\in \mathcal{E}_{0}$. Now we obtain
\[
v_{1}\, \sim_{h} \, \widetilde{v_{1}}=\left(
\begin{array}{cc}\vspace{.2cm}
|g_{1}(n)| & \overline{g_{2}} \\
g_{2} & -|g_{1}(n)| \\
\end{array}
\right).
\]
Note that $\text{Im}(|g_{1}(n)|)=0$. A similar argument as in \eqref{fuctions of eigenvalues} shows that $sp(\widetilde{v_{1}})$ is contained in the real line, which differs from $S^{1}$. By Lemma \ref{some basic results about unitary group}(\romannumeral1), we conclude $v_{1}\sim_{h}I_{2}$. 
Since $|f_{1}(n)|<{1}/{2}$ for $n\notin C$, one has $|h_1(n)|<1/2$ in the expression of $v_{2}$. Similar argument results in $sp(v_{2})\neq S^{1}$. So $v_{2}\sim_{h}I_{2}$. Finally, let $\varphi_i(t)$ $(i=1,2)$ be the corresponding path in $\mathcal{U}_{2}(\mathcal{E}_{0})$ connecting $v_i$ and $I_{2}$, respectively. Set $\varphi(t)=\varphi_{1}(t)\chi_{C}+\varphi_{2}(t)(1-\chi_{C})$.
Then $\varphi(t)$ is a continuous map from $[0,1]$ into $\mathcal{U}_{2}(\mathcal{E}_{0})$ with $\varphi(0)=u_{1}$ and $\varphi(1)=I_{2}$. Therefore $u\sim_{h}u_{1}\sim_{h}I_{2}$.
\end{proof}

Finally, let us prove Theorem \ref{thm_K1_group}.

\begin{proof} [Proof of Theorem \ref{thm_K1_group}]
We prove the claim by induction on $k$ that $u\sim_{h}I_{k}$ for any $u\in \mathcal{U}_{k}(\mathcal{E}_{0})$.
By Lemma \ref{the unitary group}, the statement holds for the $k=1$ case. Assume inductively that the statement has been proved for the $k-1$ case with some $k\geq 2$. Now we consider the $k$ case.
Let $u=(f_{ij})_{1\leq i,j\leq k}\in \mathcal{U}_{k}(\mathcal{E}_{0})$. Then $\sum_{i=1}^{k}|f_{i1}(n)|^2=1$ for any $n$. We apply the same method as in the proof of Lemma \ref{diagonalization}.
Set
$h_{1}=f_{11}$ and
$$F_{1}=\{n: {1}/{\sqrt{k}}\leq |h_{1}(n)|\leq 1\}, \quad G_{1}=\{n: {1}/{(2\sqrt{k})}<|h_{1}(n)|<1+{1}/{(2\sqrt{k})}\}.
$$
By Lemma \ref{subset with zero entropy}, there is a $C_{1}\subseteq \mathbb{N}$ with $F_{1}\subseteq C_{1}\subseteq G_{1}$, such that $\AE(\chi_{C_{1}})=0$. In the following, we use iteration on $l$ to construct a function $h_l$ and sets $F_l,G_l, C_l$ $(2\leq l\leq k)$.

Let $h_l=f_{l1}\cdot\prod\nolimits_{j=1}^{l-1} (1-\chi_{C_j})$. Set
\[
F_l=\{n: {1}/{\sqrt{k}}\leq |h_l(n)|\leq 1\},\quad G_l=\{n: {1}/{(2\sqrt{k})}<|h_l(n)|<1+{1}/{(2\sqrt{k})}\}.
\]
Since $\AE(h_l)=0$, there is a set $C_{l}$ with $F_l\subseteq C_l\subseteq G_l$ and $\AE(\chi_{C_l})=0$. It satisfies that $C_1,\ldots,C_k$ are pairwise disjoint and $\bN=C_1\cup \ldots \cup C_k$. Moreover, we have that $|f_{l1}(n)|<1/\sqrt{k}$ for $n\notin C_1\cup \ldots \cup C_l$ and $|f_{l1}(m)|>1/(2\sqrt{k})$ for $m\in C_l$ $(1\leq l\leq k)$.
For $i$ with $1\leq i\leq k$, let $v_{i}=u\chi_{C_{i}}+E_{i1}^{(k)}(1-\chi_{C_{i}})$.
Then $v_{i}\in \mathcal{U}_{k}(\mathcal{E}_{0})$ and the $i$-th element in the first column in $v_{i}$ is the function with
the range in $({1}/{2\sqrt{k}},1]$. In the following, we show $v_{1}\sim_{h}I_{k}$.

Suppose that
$
v_{1}=(g_{ij})_{1\leq i,j\leq k}$. Let
$$\widetilde{u_{1}}=\left(
                                      \begin{array}{rr} \vspace{.2cm}
                                        \ds\frac{\overline{g_{11}}}{\sqrt{|g_{11}|^2+|g_{21}|^2}} & \ds\frac{\overline{g_{21}}}{\sqrt{|g_{11}|^2+|g_{21}|^2}} \\
                                        \ds\frac{g_{21}}{\sqrt{|g_{11}|^2+|g_{21}|^2}}            & \ds-\frac{g_{11}}{\sqrt{|g_{11}|^2+|g_{21}|^2}}\\
                                      \end{array}
                                    \right)
$$
and $u_{1}=$ diag$(\widetilde{u_{1}},I_{k-2})$. It follows from Lemmas \ref{basic property}, \ref{quotient of zero entropy functions}, and Corollary \ref{cor_AE(f+g)_leq_AEf+AEg} that $u_{1}\in \mathcal{U}_{k}(\mathcal{E}_{0})$.
Set $u_{1}v_{1}=(g_{ij}^{(1)})_{1\leq i,j\leq k}$, then $g_{11}^{(1)}=\sqrt{|g_{11}|^2+|g_{21}|^2}$ and $g_{21}^{(1)}=0$.
Moreover, $|g_{11}^{(1)}(n)|>{1}/{(2\sqrt{k})}$ for $n\in \mathbb{N}$. So similar to the above process, we can obtain
$u_{2},\ldots,u_{k-1}\in \mathcal{U}_{k}(\mathcal{E}_{0})$, such that $u_{k-1}\cdot\cdot\cdot u_{2}u_{1}v_{1}=$ diag$(1,\widetilde{u})$,
where $\widetilde{u}\in \mathcal{U}_{k-1}(\mathcal{E}_{0})$. By Lemma \ref{k=2 unitary group}, $u_{i}\sim_{h}I_{k}$ for $i=1,\ldots, k-1$.
This implies $u_{1}^{-1}u_{2}^{-1}\cdot\cdot\cdot u_{k-1}^{-1}\sim_{h}I_{k}$.
Together with $\widetilde{u}\sim_{h}I_{k-1}$ from the inductive hypothesis on the $k-1$ case,
we obtain $v_{1}\sim_{h}I_{k}$.

For $i$ with $2\leq i\leq k$, applying the above argument to $E_{1i}^{(k)}v_{i}$,
we conclude $E_{1i}^{(k)}v_{i} \sim_{h}I_{k}$ and thus $v_{i}\sim_{h}I_{k}$. Suppose that $\varphi_{j}(t)$ is the continuous path
in $\mathcal{U}_{k}(\mathcal{E}_{0})$ from $v_{j}$ to $I_{k}$, for $j=1,\ldots,k$. Recall that $u=\sum_{j=1}^{k}u\chi_{C_{j}}=\sum_{j=1}^{k}v_{j}\chi_{C_{j}}$. Then the path $\varphi(t)=\sum_{j=1}^{k}\varphi_{j}(t)\chi_{C_{j}}$ is continuous such that $\varphi(0)=u$, $\varphi(1)=I_{k}$. The proof is completed.
\end{proof}

\bibliographystyle{amsplain}

\begin{thebibliography}{10}
\bibitem{AKLR} E. H. El Abdalaoui, J. Ku{\l}aga-Przymus, M. Lema\'{n}czyk and T. de La Rue, {\em The Chowla and the Sarnak conjectures from ergodic theory point of view},
Discrete Contin. Dyn. Syst. {\bf 37} (2017), 2899-2944.

\bibitem{Adl-Kon-And65} R. L. Adler, A. G. Konheim and M. H. McAndrew, {\em Topological entropy}, Trans. Amer. Math. Soc. {\bf 114} (1965), 309-319.

\bibitem{B} B. Blackadar, {\em K-Theory for Operator Algebras}, Mathematical Sciences Research Institute Publications, vol. 5, Cambridge University Press, Cambridge, 1998.

\bibitem{Blo-Cop} L. S. Block and W. A. Coppel, {\em Dynamics in One Dimension}, Lecture Notes in Mathematics, vol. 1513, Springer-Verlag, Berlin, 1992.

\bibitem{Bow71} R. Bowen, {\em Entropy for group endomorphisms and homogeneous spaces}, Trans. Amer. Math. Soc. {\bf 153} (1971), 401-414.

\bibitem{Cec37} E. \v{C}ech, {\em On bicompact spaces}, Ann. of Math. {\bf 38} (1937), 823-844.

\bibitem{Conway} J. H. Conway, {\em A Course in Functional Analysis}, Gratuate Texts in Mathematics, vol. 96, Springer-Verlag, New York, 1990.

\bibitem{GAE} G. A. Elliott, {\em On the classification of inductive limits of sequences of semisimple finite-dimensional algebras}, J. Algebra {\bf 38} (1976), 29-44.

\bibitem{Fur} H. Furstenberg, {\em Disjointness in Ergodic Theory, Minimal Sets, and a
Problem in Diophantine Approximation}, Math. Syst. Theory {\bf 1} (1961), 1-49.


\bibitem{Ge} L. Ge, {\em Topology of natural numbers and entropy of arithmetic functions},
In Operator Algebras and Their Applications: A Tribute to Richard V. Kadison, Contemporary Mathematics, vol. 671, American Mathematical Society, Providence, RI, 2016, pp. 127-144.

\bibitem{Gl} E. Glasner, \emph{Ergodic Theory via Joinings}, Mathematical surveys and monographs, vol. 101,  American Mathematical Society, Providence, RI, 2002 .


\bibitem{Iwa-Kow} H. Iwaniec and E. Kowalski, {\em Analytic Number Theory}, American Mathematical Society Colloquium Publications, vol. 53, American Mathematical Society, Providence, RI, 2004.

\bibitem{KR} R. Kadison and J. Ringrose, {\em Fundamentals of the Operator Algebras, Volume I and II}, Graduate Studies in Mathematics, vol. 15 and vol. 16, Academic Press, Orlando, 1983 and 1986.

\bibitem{Kat} A. Katok, {\em Lyapunov exponents, entropy and periodic orbits for diffeomorphisms}, Publ. Math. Inst. Hautes \'{E}tudes Sci. {\bf 51} (1980), 137-173.

\bibitem{Kol58} A. N. Kolmogorov, {\em New metric invariant of transitive dynamical systems and
endomorphisms of Lebesgue spaces}, Doklady of Russian Academy of Sciences {\bf 119} (1958), 861-864.

 \bibitem{Lin07} H. Lin, Classification of homomorphisms and dynamical systems, (English summary), Trans. Amer. Math. Soc. 359 (2007) 859-895.

\bibitem{Liu-Sar15} J. Liu and P. Sarnak, {\em The M\"{o}bius function and distal flows}, Duke Math. J. {\bf 164} (2015), 1353-1399.


\bibitem{Mur} M. R. Murty, {\em Problems in Analytic Number Theory}, Graduate Texts in Mathematics, vol. 206, Springer-Verlag, New York, 2008.

\bibitem{New} S. E. Newhouse, {\em Continuity properties of entropy}, Ann. of Math. {\bf 129} (1989), 215-235.

\bibitem{PV} M. Pimsner and D. Voiculescu, {\em $K$-groups of reduced crossed products by free groups}, J. Operator Theory {\bf 8} (1982), 131-156.

\bibitem{RLL} M. R{\o}rdam, F. Larsen and N. J. Laustsen, {\em An Introduction to K-Theory for C*-Algebras},
London Mathematical Society Student Texts, vol. 49, Cambridge University Press, Cambridge, 2000.


\bibitem{Sha48} C. E. Shannon, {\em A mathematical theory of communication}, Bell Syst. Tech. J. {\bf 27} (1948), 379-423, 623-656.



\bibitem{Sin59} Ya. G. Sinai, {\em On the notion of entropy of a dynamical system}, Doklady of
Russian Academy of Sciences, {\bf 124} (1959), 768-771.

\bibitem{Voi93} D. Voiculescu, The analogues of entropy and of Fisher¡¯s information measure in free probability theory, Comm. Math. Phys. 155(1993), 71-92.

%
%
%

\bibitem{Wal82} P. Walters, {\em An Introduction to Ergodic Theory}, Graduate Texts in Mathematics, vol. 79, Springer-Verlag, New York, 1982.


\bibitem{Wei18} F. Wei, {\em Disjointness of M\"{o}bius from asymptotically periodic functions}, arXiv:1810.07360, 2018.

\bibitem{YHS} X. Ye, W. Huang and S. Shao, {\em An Introduction to Topological Dynamical Systems (in Chinese)},
Modern Mathematics Foundation Series, vol. 118, Science Press, Beijing, 2008.

\bibitem{Yom1} Y. Yomdin, {\em Volume growth and entropy}, Isr. J. Math. {\bf 57} (1987), 285-300.










\end{thebibliography}

\end{document}